%
%
%


\documentclass{mcom-l}





\usepackage{amsmath, amssymb, amsthm, amsfonts, mathrsfs}

\usepackage{float}
\usepackage{color}
\usepackage{bm, bbm}
\usepackage{microtype}
\usepackage{fancyhdr}
\usepackage{caption}
\usepackage{wrapfig}
\usepackage{cases}
\usepackage{setspace}
\usepackage{graphicx}
\usepackage{subfigure}

\usepackage{epstopdf}
\usepackage{lineno}
\modulolinenumbers[5]
\usepackage{booktabs}
\usepackage[colorlinks,linkcolor=blue,anchorcolor=blue,citecolor=blue]{hyperref}
\setcounter{tocdepth}{2}

\usepackage[american]{babel}
\usepackage{microtype}
\usepackage{ragged2e}

\usepackage{tabularx} 
\usepackage{makecell} 
\usepackage{array} 
\usepackage{multirow} 
\usepackage{lipsum} 

\newtheorem{theorem}{Theorem}[section]
\newtheorem{lemma}[theorem]{Lemma}

\theoremstyle{definition}

\theoremstyle{proposition}
\newtheorem{proposition}[theorem]{Proposition}

\theoremstyle{corollary}
\newtheorem{corollary}[theorem]{Corollary}

\theoremstyle{remark}
\newtheorem{remark}[theorem]{Remark}

\numberwithin{equation}{section}
\allowdisplaybreaks[4]

\newcommand{\rd}{\mathrm d}
\newcommand{\hP}{\mathbb P}
\newcommand{\hR}{\mathbb R}
\newcommand{\cA}{\mathcal A}
\newcommand{\cB}{\mathcal B}
\newcommand{\cC}{\mathcal C}
\newcommand{\cF}{\mathcal F}
\newcommand{\cI}{\mathcal I}
\newcommand{\cJ}{\mathcal J}
\newcommand{\cK}{\mathcal K}
\newcommand{\cM}{\mathcal M}
\newcommand{\cO}{\mathcal O}

\begin{document}

\title[Numerical methods of stochastic Volterra equations]
{Error analysis of numerical methods on graded meshes for stochastic Volterra equations}


\author[X.\ Dai]{Xinjie Dai}
\address{School of Mathematics and Statistics, Yunnan University, Kunming, 650504, Yunnan, China; LSEC, ICMSEC, Academy of Mathematics and Systems Science, Chinese Academy of Sciences, Beijing 100190, China}
\curraddr{}
\email{xinjie@smail.xtu.edu.cn}
\thanks{This work is supported by National key R\&D Program of China (No.\ 2020YFA0713701), National Natural Science Foundation of China (Nos.\ 12031020, 11971470, 11871068), and China Postdoctoral Science Foundation (No.\ 2022M713313).}

\author[J.\ Hong]{Jialin Hong}
\address{LSEC, ICMSEC, Academy of Mathematics and Systems Science, Chinese Academy of Sciences, Beijing 100190, China; School of Mathematical Sciences, University of Chinese Academy of Sciences, Beijing 100049, China}
\curraddr{}
\email{hjl@lsec.cc.ac.cn}
\thanks{}

\author[D.\ Sheng]{Derui Sheng}
\address{LSEC, ICMSEC, Academy of Mathematics and Systems Science, Chinese Academy of Sciences, Beijing 100190, China; Department of Applied Mathematics, The Hong Kong Polytechnic University, Hung Hom, Kowloon, Hong Kong}
\curraddr{}
\email{sdr@lsec.cc.ac.cn (Corresponding author)}
\thanks{}

\subjclass[2010]{Primary 60H20, 45G05, 60H35.}

\date{}

\dedicatory{}

\keywords{singular stochastic Volterra equation, graded mesh, Euler-type method, Milstein method.}

\begin{abstract} 
This paper presents the error analysis of numerical methods on graded meshes for stochastic Volterra equations with weakly singular kernels. We first prove a novel regularity estimate for the exact solution via analyzing the associated convolution structure. This reveals that the exact solution exhibits an initial singularity in the sense that its H\"older continuous exponent on any neighborhood of $t=0$ is lower than that on every compact subset of $(0,T]$. Motivated by the initial singularity, we then construct the Euler--Maruyama method, fast Euler--Maruyama method, and Milstein method based on graded meshes. By establishing their pointwise-in-time error estimates, we give the grading exponents of meshes to attain the optimal uniform-in-time convergence orders, where the convergence orders improve those of the uniform mesh case. Numerical experiments are finally reported to confirm the sharpness of theoretical findings. 
\end{abstract}

\maketitle


\section{Introduction} 

Stochastic Volterra equations are first studied by Berger and Mizel in \cite{BergerMizel1980} and are often applied to model the rough volatility in mathematical finance and the anomalous diffusion in statistical physics; see e.g., \cite{ElEuch2019, LiLiu2017} and references therein. We consider the construction and analysis of numerical methods for the following stochastic Volterra equation with weakly singular kernels
\begin{align} \label{eq.model}
x(t) = x_0 + \int_0^t (t-s)^{-\alpha} f(x(s)) \rd s + \int_0^t (t-s)^{-\beta} g(x(s)) \rd W(s), ~~ t \in [0,T].
\end{align}
Here, $\alpha \in (0,1)$, $\beta \in (0,\frac{1}{2})$, and $W$ is an $m$-dimensional standard Brownian motion defined on some complete filtered probability space $(\Omega, \cF, \{\cF_t\}_{t\in[0,T]}, \hP)$. Assume that $x_0: \Omega \rightarrow \hR^d$, $f: \hR^d \rightarrow \hR^d$ and $g: \hR^d \rightarrow \hR^{d \times m}$ satisfy suitable conditions such that \eqref{eq.model} admits a unique strong solution $x \in \cC([0,T],L^p(\Omega,\hR^d))$ with $p \geq 2$ (cf.\ \cite[Theorem 3.3]{AbiJaberLarssonPulido2019} and \cite[Theorem 2.5]{LiHuangHu2022}). 

The numerical study of stochastic Volterra equations has received much attention; see e.g., \cite{DaiXiao2020, DoanHuongKloedenVu2020, FangLi2020, LiHuangHu2022, NualartSaikia2022, RichardTanYang2021, RichardTanYang2023, Zhang2008} for the analysis of the convergence, stability and error distribution of numerical methods. Two issues that cannot be ignored are the memory of kernels and the low regularity of solutions, as they result in expensive computational costs and poor approximation accuracy, respectively. On one hand, due to the memory of kernels, the implementation of a single sample of the Euler--Maruyama (EM) method requires a computational cost of $\cO(N^2)$ with $N$ the total number of time steps. By the sum-of-exponentials approximation, the fast EM method is developed in \cite{DaiXiao2020, FangLi2020}, which can be seen as a Markovization of non-Markovian processes and reduces the computational cost to $\cO\big( N (\log N)^2 \big)$. On the other hand, owing to the singularity of kernels and the non-smoothness of Brownian motion, the solution to \eqref{eq.model} is only $\min\{1-\alpha, \frac{1}{2}-\beta\}$-H\"older continuous on $[0,T]$ (see \cite[Theorem 2.6]{LiHuangHu2022}), which leads to a convergence order $\min\{1-\alpha,\frac{1}{2}-\beta\}$ of the EM method in $\cC([0,T],L^p(\Omega,\hR^d))$ in the literature. To improve the approximation accuracy, the Milstein method is proposed in \cite{LiHuangHu2022, RichardTanYang2021} by adding the first-order Taylor expansion term. However, the existing convergence order $\min\{1-\alpha,1-2\beta\}$ of the Milstein method is not necessarily higher than that of the EM method. This is essentially caused by the upper bound $1-\alpha$ of the above H\"older continuity exponent originated from the singular kernel $(t-s)^{-\alpha}$ in \eqref{eq.model}. In this paper, we are devoted to developing efficient numerical methods for \eqref{eq.model} by revealing the effect of the singular kernel on the regularity of the exact solution.

For the deterministic case of \eqref{eq.model}, due to the singularity of the kernel, the first-order derivative of its solution behaves like $t^{-\alpha}$, and thus the solution has an initial singularity in the sense that it exhibits a lower regularity near the initial time $t = 0$ than when $t$ is away from $0$ (see e.g., \cite{Brunner2004, Stynes2017}). A similar initial singularity can be expected for \eqref{eq.model} when the drift term governs its regularity. But we are not aware of any relevant results in the literature. In fact, an extension of the deterministic case to the stochastic case is not straightforward, since the solution to \eqref{eq.model} is nowhere differentiable with respect to time. To reveal the initial singularity in the stochastic case, we propose a novel regularity estimate for the solution to \eqref{eq.model} as follows
\begin{align} \label{eq:DXJ}
\big\| x(t) - x(s) \big\|_{L^p(\Omega,\hR^d)}
\leq C s^{\beta - \frac{1}{2}} (t-s)^{\frac{1}{2}-\beta} \quad \forall~0<s \leq t \leq T 
\end{align}
via analyzing the associated convolution structure. This validates that the solution to \eqref{eq.model} may exhibit a lower regularity near the initial time $t = 0$, since it is $(\frac{1}{2}-\beta)$-H\"older continuous on every compact subset of $(0,T]$, but is only $\min\{1-\alpha, \frac{1}{2}-\beta\}$-H\"older continuous on any neighborhood of $t=0$. 

\begin{figure}[b]
\centering
\!\!\subfigure[$r = 1$]{
\includegraphics[width=0.9\linewidth]{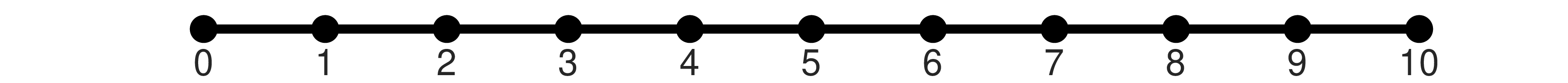}}
\\
\subfigure[$r = 2$]{
\includegraphics[width=0.9\linewidth]{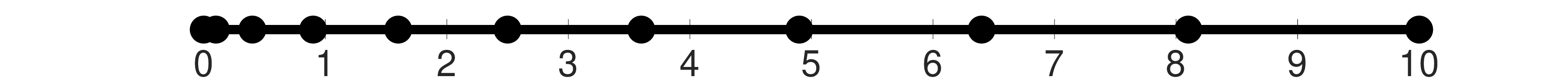}}
\caption{The meshes $\cM_1$ and $\cM_2$ with $T = 10$ and $N = 10$.}
\label{fig:sec1GM}
\end{figure}

Taking the above initial singularity into account, we study the EM method, fast EM method and Milstein method on graded meshes, hoping to attain higher uniform-in-time accuracy than using the uniform mesh in the literature. For integer $N \geq 1$, the graded mesh on $[0,T]$ is defined by 
\begin{align} \label{eq:defGM}
\cM_r := \big\{ t_n := T (n/N)^{r }, \, n= 0, 1, \ldots, N \big\}, 
\end{align} 
where $r \geq 1$ is called the grading exponent (cf. \cite{Brunner1985, Brunner2004}). Figure \ref{fig:sec1GM} plots the meshes $\cM_1$ and $\cM_2$ with $T = 10$ and $N = 10$. When $r = 1$, the mesh $\cM_1$ is uniform with stepsize $T / N$. When $r > 1$, the mesh $\cM_r$ is non-uniform, where the mesh points are densely clustered near the initial time $t = 0$. Making full use of \eqref{eq:DXJ}, we present the pointwise-in-time error analyses for the EM method, fast EM method and Milstein method on graded meshes in Theorems \ref{thm.EM}, \ref{thm.fastEM} and \ref{thm.Milstein}, respectively. These results clearly show the dependence of convergence orders of numerical methods on the singularity powers $\alpha,\,\beta$ and the grading exponent $r$, and indicate that the convergence order at the terminal time $T$ of the Milstein method is strictly higher than that of the EM method even for the uniform mesh case. Further, we give the optimal grading exponent of meshes in Corollary \ref{cor.EMres} (resp. Corollary \ref{cor.Milsteinres}) such that the convergence order of the EM method (resp.\ the Milstein method) can achieve $\frac{1}{2}-\beta$ (resp.\ $\min\{\frac{3}{2}-\alpha-\beta, 1-2\beta\}$) uniformly in time. To the best of our knowledge, this is the first result on applying graded meshes to numerically solve stochastic Volterra equations, the validity and superiority of which are supported by our theoretical results. In contrast to Milstein methods in the literature for stochastic Volterra equations, we mention that the one in the present work does not involve the first-order derivative of the drift coefficient $f$, and is thereby compatible with that of stochastic differential equations (cf.\ \cite{KloedenPlaten1992, MilsteinTretyakov2004}). 

The rest of the paper is organized as follows. The regularity theory of the solution to \eqref{eq.model} is presented in section \ref{sec.Regularity}. Then we study the EM method, fast EM method, and Milstein method on graded meshes in section \ref{sec.NumericalMethods}. Numerical tests are performed in section \ref{sec.NumerExper} to verify the sharpness of the theoretical results.

\textbf{Notations.} Use $C$ as a generic constant, which is always independent of $N$, but may be different when it appears in different places. Let $|\cdot|$ represent both the Euclidean norm on $\mathbb{R}^d$ and the Frobenius norm on $\mathbb{R}^{d \times m}$. The ceil function is denoted by $\lceil \cdot \rceil$. For integer $k \geq 1$, denote by $\cC_b^k(U, V)$ the space of not necessarily bounded functions from $U$ to $V$ that have continuous and bounded derivatives up to order $k$. Here, $U$ and $V$ are two Euclidean spaces (e.g., $\hR^d$ and $\hR^{d \times m}$).

\section{Regularity theory}
\label{sec.Regularity}

In this section, we are devoted to developing a novel regularity estimate (i.e., Theorem \ref{thm.newRegu}) to reveal the singularity of the solution to \eqref{eq.model} near the initial time $t = 0$. We begin with revisiting the existence, uniqueness, and H\"older continuity of the solution to \eqref{eq.model}. 

\begin{theorem} \textup{(\cite[Theorems 2.5 and 2.6]{LiHuangHu2022})}
\label{thm.wellpos}
Assume that $x_0 \in L^p(\Omega,\hR^d)$ with $p \geq 2$, and there exists $L > 0$ such that
\begin{align} \label{eq.assfLip}
|f(y) - f(z)| + |g(y) - g(z)| \leq L|y-z| \quad \forall\, y,z \in \hR^d.
\end{align}
Then \eqref{eq.model} admits a unique strong solution $x \in \cC([0,T], L^p(\Omega,\hR^d))$. Moreover,
\begin{align} \label{eq.xHolder}
\| x(t) - x(s) \|_{L^p(\Omega,\hR^d)} \leq C(t-s)^{\min\{1-\alpha,\, \frac{1}{2}-\beta\}} \quad \forall\, 0 \leq s < t \leq T,
\end{align}
where $C>0$ is independent of $t$ and $s$.
\end{theorem}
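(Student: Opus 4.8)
The plan is to prove the well-posedness and Hölder continuity via a fixed-point/Picard iteration argument in the Banach space $\cC([0,T], L^p(\Omega,\hR^d))$, combined with stochastic convolution estimates. First I would define the solution map $\Phi$ by $(\Phi y)(t) := x_0 + \int_0^t (t-s)^{-\alpha} f(y(s))\,\rd s + \int_0^t (t-s)^{-\beta} g(y(s))\,\rd W(s)$ and show it maps $\cC([0,T], L^p(\Omega,\hR^d))$ into itself. The deterministic Volterra integral is handled by Minkowski's integral inequality together with the finiteness of $\int_0^t (t-s)^{-\alpha}\,\rd s = \frac{t^{1-\alpha}}{1-\alpha}$ for $\alpha \in (0,1)$; the stochastic convolution requires the Burkholder--Davis--Gundy inequality, which reduces the $L^p$ norm of $\int_0^t (t-s)^{-\beta} g(y(s))\,\rd W(s)$ to $\big\| \big( \int_0^t (t-s)^{-2\beta} |g(y(s))|^2 \,\rd s \big)^{1/2} \big\|_{L^p}$, and then another application of Minkowski's inequality plus $\int_0^t (t-s)^{-2\beta}\,\rd s < \infty$ for $\beta \in (0,\frac12)$.

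Next I would establish contractivity. Because the kernels are singular, a single application of $\Phi$ need not be a contraction on $[0,T]$ for large $T$, so I would either (i) estimate $\|\Phi y - \Phi z\|$ on a short interval $[0,\delta]$ using the same BDG/Minkowski bounds together with the Lipschitz assumption \eqref{eq.assfLip} — here the constants carry factors $\delta^{1-\alpha}$ and $\delta^{\frac12-\beta}$ which can be made $<1$ — and then patch intervals together, or (ii) use a weighted norm $\sup_{t} e^{-\lambda t}\|y(t)\|_{L^p}$ (a Bielecki-type norm) and choose $\lambda$ large; either route gives a unique fixed point $x \in \cC([0,T], L^p(\Omega,\hR^d))$. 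Since Theorem~\ref{thm.wellpos} is attributed to \cite[Theorems 2.5 and 2.6]{LiHuangHu2022}, I would in practice cite that proof and only sketch this part, but the structure above is what underlies it.

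For the Hölder estimate \eqref{eq.xHolder}, I would fix $0 \le s < t \le T$ and write $x(t) - x(s)$ as a sum of terms: the difference of the drift convolutions splits into $\int_0^s \big[(t-u)^{-\alpha} - (s-u)^{-\alpha}\big] f(x(u))\,\rd u$ plus $\int_s^t (t-u)^{-\alpha} f(x(u))\,\rd u$, and similarly for the diffusion convolution with the martingale handled by BDG before splitting. The new-interval pieces contribute $(t-s)^{1-\alpha}$ and $(t-s)^{\frac12-\beta}$ respectively after bounding $f, g$ linearly (using Lipschitz plus the already-established $\sup_{u}\|x(u)\|_{L^p} < \infty$). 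The overlap pieces reduce to estimating $\int_0^s \big[(s-u)^{-\alpha} - (t-u)^{-\alpha}\big]\,\rd u$ and $\int_0^s \big[(s-u)^{-\beta} - (t-u)^{-\beta}\big]^2 \,\rd u$; the first equals $\frac{1}{1-\alpha}\big(s^{1-\alpha} - t^{1-\alpha} + (t-s)^{1-\alpha}\big) \le \frac{(t-s)^{1-\alpha}}{1-\alpha}$ by concavity, and the second is bounded by $C(t-s)^{1-2\beta}$ by a standard computation (substituting $u = s - (t-s)v$ and splitting the $v$-integral, or using the elementary inequality $(a^{-\beta}-b^{-\beta})^2 \le (b-a)^{1-2\beta} a^{-1-2\beta}\cdot$const for $a<b$... more cleanly, scaling). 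Taking square roots and collecting terms yields the bound $C(t-s)^{\min\{1-\alpha,\frac12-\beta\}}$.

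The main obstacle is the overlap integral for the stochastic part, i.e., showing $\int_0^s \big[(s-u)^{-\beta} - (t-u)^{-\alpha}... \big]^2\,\rd u \le C(t-s)^{1-2\beta}$ uniformly in $s \le t \le T$; the difficulty is that the integrand is singular at $u = s$ and one must balance the singularity against the smallness of the bracket away from $u=s$. The clean way is the change of variables $u = s - (t-s)w$, turning the integral into $(t-s)^{1-2\beta}\int_0^{s/(t-s)} \big[w^{-\beta} - (w+1)^{-\beta}\big]^2\,\rd w$, and then observing that $w \mapsto \big[w^{-\beta}-(w+1)^{-\beta}\big]^2$ is integrable at $0$ (behaves like $w^{-2\beta}$, integrable since $\beta<\frac12$) and at $\infty$ (behaves like $w^{-2\beta-2}$), so the $w$-integral is bounded by an absolute constant. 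This is exactly the kind of scaling computation that also drives the refined estimate \eqref{eq:DXJ} in Theorem~\ref{thm.newRegu}, so it is worth setting up carefully; everything else is a routine combination of Minkowski's inequality, BDG, and Lipschitz bounds.
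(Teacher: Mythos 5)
Theorem \ref{thm.wellpos} is quoted directly from \cite[Theorems 2.5 and 2.6]{LiHuangHu2022} and the paper supplies no proof of its own, so there is nothing to compare against; your sketch is the standard Picard/Bielecki-norm plus Burkholder--Davis--Gundy argument underlying the cited result, and it is correct --- in particular the key scaling computation $\int_0^s\big[(s-u)^{-\beta}-(t-u)^{-\beta}\big]^2\,\rd u=(t-s)^{1-2\beta}\int_0^{s/(t-s)}\big[w^{-\beta}-(1+w)^{-\beta}\big]^2\,\rd w$ is exactly right, the $w$-integral being finite because the integrand is $O(w^{-2\beta})$ at $0$ and $O(w^{-2\beta-2})$ at infinity. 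The only blemish is a typo in your penultimate paragraph where the bracket momentarily reads $(t-u)^{-\alpha}$ instead of $(t-u)^{-\beta}$; the computation that follows uses the correct kernel, so nothing is affected.
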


For measurable functions $F: [0,T]\times\Omega \rightarrow \hR^d$ and $G: [0,T]\times\Omega \rightarrow \hR^{d \times m}$, once we define the convolution integrals
\begin{align*} 
\Lambda_F(t) := \int_0^t (t-s)^{-\alpha} F(s) \rd s ~~ \mbox{and} ~~ \Upsilon_G(t) := \int_0^t (t-s)^{-\beta} G(s) \rd W(s) ~~ \forall\, t \in [0,T],
\end{align*}
the equation \eqref{eq.model} can be rewritten as
\begin{align} \label{eq.equivModel}
x(t) = x_0 + \Lambda_{f \circ x}(t) + \Upsilon_{g \circ x}(t).
\end{align} 
Here, by the proof of Theorem 2.6 of \cite{LiHuangHu2022}, for any $0 \leq s < t \leq T$, 
\begin{align}
\left\| \Lambda_{f \circ x}(t) - \Lambda_{f \circ x}(s) \right\|_{L^p(\Omega,\hR^d)}
&\leq C(t-s)^{1-\alpha}, \label{eq.drift} \\
\left\| \Upsilon_{g \circ x}(t) - \Upsilon_{g \circ x}(s) \right\|_{L^p(\Omega,\hR^d)}
&\leq C(t-s)^{\frac{1}{2}-\beta}. \label{eq.duff}
\end{align}

In general, the error analysis of numerical methods more or less involves the regularity of the underlying model. It is worth emphasizing that the H\"older regularity estimate \eqref{eq.xHolder} on $[0, T]$ is sharp for the exact solution (see \cite[Remark 2.7]{LiHuangHu2022}). In order to provide delicate error analyses for numerical methods of \eqref{eq.model}, we propose a novel regularity estimate for the exact solution, as stated in the following theorem.

\begin{theorem} \label{thm.newRegu}
Under the assumptions of Theorem \ref{thm.wellpos}, 
\begin{align*}
\| x(t) - x(s) \|_{L^p(\Omega,\hR^d)}
&\leq C s^{\beta-\frac{1}{2}} (t-s)^{\min\{\frac{3}{2}-\alpha-\beta,\, 1\}} + C(t-s)^{\frac{1}{2}-\beta} \\
&\leq C s^{\beta - \frac{1}{2}} (t-s)^{\frac{1}{2}-\beta}\quad\forall\,0 < s < t \leq T.
\end{align*}
\end{theorem}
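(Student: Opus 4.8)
The plan is to bootstrap from the already-known Hölder estimate \eqref{eq.xHolder} and the equivalent representation \eqref{eq.equivModel}, exploiting the fact that only the drift convolution $\Lambda_{f\circ x}$ is responsible for the worse-than-$(\tfrac12-\beta)$ behaviour near $t=0$. Since the diffusion part already satisfies the uniform estimate \eqref{eq.duff}, it suffices to establish the refined bound
\begin{align*}
\left\| \Lambda_{f\circ x}(t) - \Lambda_{f\circ x}(s) \right\|_{L^p(\Omega,\hR^d)}
\leq C s^{\beta-\frac12}(t-s)^{\min\{\frac32-\alpha-\beta,\,1\}}
\end{align*}
for $0<s<t\le T$, and then the first displayed inequality of the theorem follows from \eqref{eq.equivModel}. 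The second inequality is then a purely elementary consequence: one checks that $s^{\beta-\frac12}(t-s)^{\min\{\frac32-\alpha-\beta,1\}} \le C s^{\beta-\frac12}(t-s)^{\frac12-\beta}$ using $0<t-s\le T$ together with $\min\{\tfrac32-\alpha-\beta,1\} \ge \tfrac12-\beta$ (which holds because $\alpha<1$ and $\beta<\tfrac12$), and that $(t-s)^{\frac12-\beta} \le s^{\beta-\frac12}(t-s)^{\frac12-\beta}$ when $s\le T$ after absorbing a constant, or more carefully by splitting into the cases $t-s\le s$ and $t-s>s$.

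For the refined drift estimate I would split $\Lambda_{f\circ x}(t)-\Lambda_{f\circ x}(s)$ in the standard convolution way:
\begin{align*}
\Lambda_{f\circ x}(t)-\Lambda_{f\circ x}(s)
= \int_s^t (t-u)^{-\alpha} f(x(u))\,\rd u
+ \int_0^s \big[(t-u)^{-\alpha}-(s-u)^{-\alpha}\big] f(x(u))\,\rd u.
\end{align*}
The first term is handled crudely: $\|f(x(u))\|_{L^p} \le C$ by linear growth of $f$ and boundedness of $\sup_{u}\|x(u)\|_{L^p}$, so its $L^p$-norm is $\le C\int_s^t (t-u)^{-\alpha}\,\rd u = C(1-\alpha)^{-1}(t-s)^{1-\alpha}$, and since $1-\alpha \ge \min\{\tfrac32-\alpha-\beta,1\}$ and $t-s\le T$, this is dominated by $C(t-s)^{\min\{\frac32-\alpha-\beta,1\}} \le C s^{\beta-\frac12}(t-s)^{\min\{\frac32-\alpha-\beta,1\}}$ (again absorbing $s^{\frac12-\beta}\le T^{\frac12-\beta}$). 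For the second term I would write $f(x(u)) = [f(x(u))-f(x(s))] + f(x(s))$. The constant-in-$u$ piece contributes $f(x(s))\int_0^s[(s-u)^{-\alpha}-(t-u)^{-\alpha}]\,\rd u$, whose kernel integral equals $\frac{1}{1-\alpha}\big(s^{1-\alpha}-t^{1-\alpha}+(t-s)^{1-\alpha}\big) \le \frac{1}{1-\alpha}(t-s)^{1-\alpha}$ by subadditivity of $\tau\mapsto\tau^{1-\alpha}$, giving again a bound $\le C(t-s)^{1-\alpha}$ of the right form. The genuinely new part is the remainder $\int_0^s[(s-u)^{-\alpha}-(t-u)^{-\alpha}]\,[f(x(u))-f(x(s))]\,\rd u$, where I use $\|f(x(u))-f(x(s))\|_{L^p}\le L\|x(u)-x(s)\|_{L^p}\le CL|s-u|^{\frac12-\beta}$ (applying \eqref{eq.xHolder}, and using $\tfrac12-\beta\le 1-\alpha$ so this is the binding exponent near $u=s$), reducing everything to the deterministic kernel estimate
\begin{align*}
\int_0^s \big[(s-u)^{-\alpha}-(t-u)^{-\alpha}\big](s-u)^{\frac12-\beta}\,\rd u \le C\, s^{\beta-\frac12}(t-s)^{\min\{\frac32-\alpha-\beta,1\}}.
\end{align*}

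The main obstacle, and the technical heart of the proof, is precisely this last scalar estimate. I would prove it by substituting $u = s v$ (or $s-u = sv$), pulling out a power of $s$, and reducing to a dimensionless integral $\int_0^1\big[w^{-\alpha}-(w+\theta)^{-\alpha}\big]w^{\frac12-\beta}\,\rd w$ with $\theta = (t-s)/s \in (0,\infty)$, then bounding this uniformly in $\theta$ against $C\min\{\theta^{\frac32-\alpha-\beta},\theta\}$ — split at $\theta\le 1$ versus $\theta>1$; for small $\theta$ use the mean value estimate $w^{-\alpha}-(w+\theta)^{-\alpha}\le \alpha\theta w^{-\alpha-1}$ on $\{w\ge\theta\}$ and the cruder $w^{-\alpha}-(w+\theta)^{-\alpha}\le w^{-\alpha}$ on $\{w<\theta\}$, checking convergence of $\int \theta w^{-\alpha-1}w^{\frac12-\beta}\rd w$ near $w=\theta$ requires $-\alpha-1+\tfrac12-\beta > -1$ on the truncated range or careful bookkeeping, and the exponent $\tfrac32-\alpha-\beta$ emerges from $\theta\cdot\theta^{\frac12-\beta-\alpha}$; the capping at $1$ arises because once $\tfrac32-\alpha-\beta\ge1$ (i.e.\ $\alpha+\beta\le\tfrac12$) the $\{w<\theta\}$ contribution $\int_0^\theta w^{-\alpha}w^{\frac12-\beta}\rd w \sim \theta^{\frac32-\alpha-\beta}$ is no longer the dominant term and the mean-value piece gives the $\theta^{1}$ rate. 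Tracking the constants and the several regime splits (small/large $\theta$; $u$ near $0$ vs.\ near $s$; $\alpha+\beta$ above/below $\tfrac12$) carefully is what makes the argument lengthy, but no single step is deep once the $u=sv$ rescaling is in place. Finally, I would assemble the three contributions, note that $s^{\beta-\frac12}\ge T^{\beta-\frac12}$ so the terms already bounded by $C(t-s)^{\min\{\frac32-\alpha-\beta,1\}}$ can be written in the claimed form, and combine with \eqref{eq.duff} via the triangle inequality applied to \eqref{eq.equivModel}.
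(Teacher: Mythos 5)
Your overall strategy---the diffusion increment is already controlled by \eqref{eq.duff}, so the gain must come from the smoothing of the drift convolution---is the same as the paper's, but your one\--shot argument breaks down exactly in the hard regime $\alpha>\tfrac12+\beta$ (e.g.\ $\alpha=0.9$, $\beta=0.1$, the case used in the paper's experiments). Two of your inequalities are reversed. First, $1-\alpha\ge\min\{\tfrac32-\alpha-\beta,\,1\}$ is false: since $\beta<\tfrac12$ and $\alpha>0$ one has $1-\alpha<\min\{\tfrac32-\alpha-\beta,\,1\}$ strictly, so the crude bounds $C(t-s)^{1-\alpha}$ that you assign to $\int_s^t(t-u)^{-\alpha}f(x(u))\,\rd u$ and to the constant\--in\--$u$ piece of the tail integral are \emph{not} dominated by $Cs^{\beta-\frac12}(t-s)^{\min\{\frac32-\alpha-\beta,1\}}+C(t-s)^{\frac12-\beta}$ when $t-s\ll s$ and $1-\alpha<\tfrac12-\beta$. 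The cure is a cancellation that your decomposition discards: the constant parts of the two integrals sum to $f(x(s))\,(t^{1-\alpha}-s^{1-\alpha})/(1-\alpha)$, the two $(t-s)^{1-\alpha}$ contributions cancelling, and $t^{1-\alpha}-s^{1-\alpha}\le Cs^{\beta-\frac12}(t-s)^{\min\{\frac32-\alpha-\beta,1\}}$ does have the claimed form --- this is precisely the term $\Psi_1$ in the paper's Proposition \ref{prop.convo}. By estimating the two pieces separately you leave an irreducible $(t-s)^{1-\alpha}$ in the final bound, which contradicts nothing only when $\alpha\le\tfrac12+\beta$.

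Second, and more fundamentally, you apply \eqref{eq.xHolder} with exponent $\tfrac12-\beta$, justified by ``$\tfrac12-\beta\le 1-\alpha$''; that inequality also fails when $\alpha>\tfrac12+\beta$, and \eqref{eq.xHolder} then only yields $\|x(u)-x(s)\|_{L^p(\Omega,\hR^d)}\le C|s-u|^{1-\alpha}$. Feeding this into your (correct) scalar kernel estimate returns an exponent of order $\min\{2(1-\alpha),\,1\}$ for the drift increment, which for $\alpha$ near $1$ is still far below $\tfrac12-\beta$, so a single pass cannot produce the theorem. This is exactly why the paper proves a general convolution\--smoothing statement (Proposition \ref{prop.convo} and Remark \ref{rem.convo}) and \emph{iterates} it: each pass raises the accessible H\"older exponent by $1-\alpha$, and after finitely many iterations ($k(1-\alpha)\ge\tfrac12-\beta$) the stated estimate is reached. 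Your rescaled integral bound $\int_0^1\bigl[w^{-\alpha}-(w+\theta)^{-\alpha}\bigr]w^{\frac12-\beta}\,\rd w\le C\min\{\theta^{\frac32-\alpha-\beta},\theta\}$ is sound and is essentially the paper's $\Psi_2+\Psi_3$ computation, so your argument does go through when $\alpha\le\tfrac12+\beta$; to cover all $\alpha\in(0,1)$ you must restore the $t^{1-\alpha}-s^{1-\alpha}$ cancellation and wrap the whole estimate in a bootstrap.
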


As an advantage, the power of $t-s$ in Theorem \ref{thm.newRegu} is higher than that in \eqref{eq.xHolder}. Thus, we will adopt Theorem \ref{thm.newRegu} to establish the error analysis of numerical methods in section \ref{sec.NumericalMethods}. The main idea behind the proof of Theorem \ref{thm.newRegu} comes from the convolutional structure of \eqref{eq.equivModel}. Before proving Theorem \ref{thm.newRegu}, we first study the regularity of the convolution $\Lambda_F$ with $F \in \cC([0,T], L^p(\Omega,\hR^d))$. Since $f \circ x \in \cC([0,T], L^p(\Omega,\hR^d))$ in view of Theorem \ref{thm.wellpos}, the following proposition can be used to analyze the regularity of the solution to \eqref{eq.equivModel}.

\begin{proposition} \label{prop.convo}
Let $0 < s < t \leq T$ be arbitrary. For the measurable function $F \in \cC([0,T], L^p(\Omega,\hR^d))$, if there exist $K > 0$, $\mu \in [0,1)$ and $\nu \in [0,1)$ such that
\begin{align} \label{eq.assF}
\|F(t) - F(s)\|_{L^p(\Omega,\hR^d)} \leq K s^{-\mu} (t-s)^{\nu},
\end{align}
then for arbitrarily small $\varepsilon>0$,
\begin{align*}
\| \Lambda_F(t) - \Lambda_F(s) \|_{L^p(\Omega,\hR^d)}
&\leq
\begin{cases}
C ( t^{1-\alpha} - s^{1-\alpha} ) + C s^{-\mu} (t-s)^{ \min\{1-\alpha+\nu,\, 1\} }, &\!\!\!\! \mbox{if } \nu \neq \alpha, \\
C ( t^{1-\alpha} - s^{1-\alpha} ) + C s^{-\mu} (t-s)^{1-\varepsilon}, &\!\!\!\! \mbox{if } \nu = \alpha,
\end{cases}
\end{align*}
where $C>0$ is independent of $t$ and $s$.
\end{proposition}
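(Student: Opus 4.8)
The plan is to split the increment $\Lambda_F(t) - \Lambda_F(s)$ along the classical convolution decomposition and estimate each piece using the hypothesis \eqref{eq.assF}. First I would write
\begin{align*}
\Lambda_F(t) - \Lambda_F(s)
&= \int_0^t (t-u)^{-\alpha} F(u) \rd u - \int_0^s (s-u)^{-\alpha} F(u) \rd u \\
&= \underbrace{\int_s^t (t-u)^{-\alpha} F(u) \rd u}_{=:I_1}
+ \underbrace{\int_0^s \big[ (t-u)^{-\alpha} - (s-u)^{-\alpha} \big] F(u) \rd u}_{=:I_2}.
\end{align*}
For $I_1$, I would subtract and add $F(s)$ inside the integral: the contribution of the constant part $F(s)$ contributes $\frac{(t-s)^{1-\alpha}}{1-\alpha}\|F(s)\|_{L^p}$, which is bounded (since $\|F\|_{L^p}$ is bounded on $[0,T]$ by continuity) by $C(t^{1-\alpha}-s^{1-\alpha})$ only if $s$ is bounded below — more carefully, for the final statement it suffices to bound this by $C s^{-\mu}(t-s)^{1-\alpha}$ together with the deterministic term $C(t^{1-\alpha}-s^{1-\alpha})$, which dominates $(t-s)^{1-\alpha}$ when $s$ is small and vice versa; I will present it so the two named terms in the conclusion absorb it. The genuinely $\nu$-dependent part of $I_1$ is $\int_s^t (t-u)^{-\alpha}\|F(u)-F(s)\|_{L^p}\rd u \le K \int_s^t (t-u)^{-\alpha} u^{-\mu}(u-s)^\nu \rd u \le K s^{-\mu}\int_s^t (t-u)^{-\alpha}(u-s)^\nu \rd u$, and the Beta-integral $\int_s^t (t-u)^{-\alpha}(u-s)^\nu\rd u = B(1-\alpha,\nu+1)(t-s)^{1-\alpha+\nu}$ gives exactly the $s^{-\mu}(t-s)^{1-\alpha+\nu}$ term (with the $\min\{\cdot,1\}$ accounting for the trivial bound $(t-s)^{\nu}\le T^{\nu}(t-s)^{0}$-type truncation so the exponent never exceeds what the $F(s)$-piece already provides).

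For $I_2$, the factor $(s-u)^{-\alpha}-(t-u)^{-\alpha}$ is nonnegative, so I would use the Minkowski integral inequality to get
\begin{align*}
\|I_2\|_{L^p(\Omega,\hR^d)} \le \int_0^s \big[ (s-u)^{-\alpha} - (t-u)^{-\alpha} \big]\, \|F(u)\|_{L^p(\Omega,\hR^d)} \rd u.
\end{align*}
Bounding $\|F(u)\|_{L^p}$ crudely by its supremum would only give $C(t^{1-\alpha}-s^{1-\alpha})$ after integrating the kernel difference against $1$ — but that alone is not enough to get the $s^{-\mu}$-weighted term, so instead I write $F(u) = F(s) + (F(u)-F(s))$ here too. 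The $F(s)$-part integrates the bracket against a constant and yields precisely $\frac{\|F(s)\|_{L^p}}{1-\alpha}\big( t^{1-\alpha} - s^{1-\alpha} - (t-s)^{1-\alpha}\big) \le C(t^{1-\alpha}-s^{1-\alpha})$. The remaining part is $\int_0^s \big[(s-u)^{-\alpha}-(t-u)^{-\alpha}\big] K u^{-\mu}(s-u)^\nu\rd u$, which I would bound by $K\int_0^s (s-u)^{-\alpha+\nu} u^{-\mu}\rd u$ (dropping the nonnegative subtracted term), and this is a Beta integral $B(\nu-\alpha+1,1-\mu)\, s^{\nu-\alpha+1-\mu}$ \emph{provided} $\nu - \alpha > -1$, which holds; comparing $s^{\nu-\alpha+1-\mu}$ with $s^{-\mu}(t-s)^{\nu-\alpha+1}$ on $0<s<t$ (when $\nu-\alpha+1\ge 0$ the former is $\le$ the latter since $s\le t$... actually one needs $s^{\nu-\alpha+1}\le (t-s)^{\nu-\alpha+1}$ which is false; instead $s^{\nu-\alpha+1-\mu}\le T^{\nu-\alpha+1}s^{-\mu}$, absorbing it into the $s^{-\mu}(t-s)^{\min\{1-\alpha+\nu,1\}}$ term via $(t-s)^{\min\{1-\alpha+\nu,1\}}$ bounded below only for... ) — so here the $\min\{1-\alpha+\nu,1\}$ truncation and a separate direct estimate of $I_2$ using a mean-value/convexity bound on the kernel difference, $(s-u)^{-\alpha}-(t-u)^{-\alpha}\le \alpha (s-u)^{-\alpha-1}(t-s)$ when $t-s$ is small relative to $s-u$, split at $u = s - (t-s)$, will be needed to land on the clean exponent $\min\{1-\alpha+\nu,1\}$. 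The $\nu=\alpha$ case is the borderline of the Beta integral $\int_s^t(t-u)^{-\alpha}(u-s)^\nu\rd u$ and of $\int_0^s(s-u)^{\nu-\alpha}u^{-\mu}\rd u$ becoming logarithmic; there I would replace $(u-s)^\nu = (u-s)^{\alpha}$ by $(u-s)^{\alpha-\varepsilon}(u-s)^{\varepsilon}\le T^\varepsilon (u-s)^{\alpha-\varepsilon}$ (and similarly for the $I_2$ kernel-difference estimate) to restore a convergent Beta integral and obtain the $(t-s)^{1-\varepsilon}$ rate.

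The main obstacle will be the $I_2$ estimate: getting the exponent of $t-s$ up to $\min\{1-\alpha+\nu,1\}$ (rather than the easy $1-\alpha$) while keeping the weight exactly $s^{-\mu}$ requires carefully splitting the interval $[0,s]$ at $u = s-(t-s)$ (or $u=s/2$), using the crude kernel bound $(s-u)^{-\alpha}-(t-u)^{-\alpha}\le (s-u)^{-\alpha}$ on the part near $u=s$ and the refined bound $(s-u)^{-\alpha}-(t-u)^{-\alpha}\le \alpha\,(t-s)\,(s-u)^{-\alpha-1}$ on the part away from $u=s$, and then checking in each regime that the resulting power of $s$ and power of $t-s$ combine (after bounding $s$-powers by constants using $s\le T$) into the two stated terms. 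Keeping track of the $\min\{\cdot,1\}$ cap — which appears because once $1-\alpha+\nu \ge 1$ the $F(s)$-generated terms $t^{1-\alpha}-s^{1-\alpha}$ and $(t-s)^{1-\alpha}$ themselves control everything — and handling the $\nu=\alpha$ logarithmic endpoint via the $\varepsilon$-loss, are the remaining technical points; all the integrals involved are standard Beta-type integrals once the splittings are in place.
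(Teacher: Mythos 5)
Your decomposition is, after the change of variables $u\mapsto s-u$, exactly the one in the paper: your $I_1$-increment and $I_2$-increment terms are the paper's $\Psi_2$ and $\Psi_3$, your splitting of $[0,s]$ at $u=s-(t-s)$ with the crude bound near $u=s$ and the mean-value bound $(s-u)^{-\alpha}-(t-u)^{-\alpha}\le \alpha(t-s)(s-u)^{-\alpha-1}$ away from it reproduces the paper's estimate of $\Psi_3$, and the $\varepsilon$-loss at $\nu=\alpha$ is handled the same way. Those parts of your plan go through.

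There is, however, one genuine gap: your treatment of the constant ($F(s)$-) contributions. The $F(s)$-part of $I_1$ equals $\frac{(t-s)^{1-\alpha}}{1-\alpha}F(s)$, and the $F(s)$-part of $I_2$ has norm $\frac{1}{1-\alpha}\bigl(s^{1-\alpha}+(t-s)^{1-\alpha}-t^{1-\alpha}\bigr)\|F(s)\|_{L^p}$; \emph{each} of these is of order $(t-s)^{1-\alpha}$ when $s$ is bounded away from $0$ and $t-s$ is small, and $(t-s)^{1-\alpha}$ is \emph{not} dominated by $C(t^{1-\alpha}-s^{1-\alpha})+Cs^{-\mu}(t-s)^{\min\{1-\alpha+\nu,1\}}$ (take $\mu=0$, $\nu>0$, $s=1$, $t=1+\delta$: the right-hand side is $O(\delta)+O(\delta^{1-\alpha+\nu})=o(\delta^{1-\alpha})$). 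So the "absorption" you invoke for both pieces fails, and estimating them separately only yields the weaker conclusion with an extra $(t-s)^{1-\alpha}$ term — which would destroy the bootstrap in the proof of Theorem \ref{thm.newRegu}, where the whole point is that the increment exponent exceeds $1-\alpha$. The fix is to add the two $F(s)$-parts \emph{before} taking norms: their sum is exactly $\frac{F(s)}{1-\alpha}(t^{1-\alpha}-s^{1-\alpha})$, the $(t-s)^{1-\alpha}$ terms cancelling identically. This cancellation is precisely what the paper builds in by first writing $\Lambda_F(t)=\frac{F(0)}{1-\alpha}t^{1-\alpha}+\Phi(t)$ and then grouping the constant contributions from $[s,t]$ and $[0,s]$ into the single term $\Psi_1=(1-\alpha)^{-1}(F(s)-F(0))(t^{1-\alpha}-s^{1-\alpha})$. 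With that regrouping your argument closes; without it, the stated bound is not proved.
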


\begin{proof}
For any $t \in [0,T]$, define $\Phi(t) = \int_0^t (t-u)^{-\alpha} \big( F(u) - F(0) \big) \rd u$, which gives
\begin{align} \label{eq.LambdaF2}
\Lambda_F(t) = \frac{F(0)}{1-\alpha} t^{1-\alpha} + \Phi(t).
\end{align}
For simplicity, let $\tau = t - s$. By the change of variables,
\begin{align*}
\Phi(t) - \Phi(s) &= \int_{-\tau}^{s} (u+\tau)^{-\alpha} \big( F(s-u) - F(0) \big) \rd u - \int_{0}^{s} u^{-\alpha} \big( F(s-u) - F(0) \big) \rd u \\
&= \underbrace{\int_{-\tau}^{s} (u+\tau)^{-\alpha} \big( F(s) - F(0) \big) \rd u - \int_{0}^{s} u^{-\alpha} \big( F(s) - F(0) \big) \rd u}_{=:\, \Psi_1} \\
&\quad + \underbrace{\int_{-\tau}^{0} (u+\tau)^{-\alpha} \big( F(s-u) - F(s) \big) \rd u}_{=:\, \Psi_2} \\
&\quad + \underbrace{\int_{0}^{s} \big( (u+\tau)^{-\alpha} - u^{-\alpha} \big) \big( F(s-u) - F(s) \big) \rd u}_{=:\, \Psi_3}.
\end{align*}
Firstly, $\Psi_1=(1-\alpha)^{-1}(F(s) - F(0)) ( t^{1-\alpha} - s^{1-\alpha} )$ and $F \in \cC([0,T], L^p(\Omega,\hR^d))$ show
\begin{align*}
\| \Psi_1 \|_{L^p(\Omega,\hR^d)} \leq C ( t^{1-\alpha} - s^{1-\alpha} ).
\end{align*}
Secondly, it follows from the Beta function $B(m,n) := \int_{0}^{1} z^{m-1} (1-z)^{n-1} \rd z$ with $m, n > 0$ that for any $-\infty < a < b < \infty$,
\begin{align}\label{eq.BetaInte}
\int_a^b (b-z)^{m-1} (z-a)^{n-1} \rd z
= (b-a)^{m+n-1} B(m,n) \quad \forall\, m, n > 0,
\end{align}
which along with \eqref{eq.assF} indicates
\begin{align*}
\| \Psi_2 \|_{L^p(\Omega,\hR^d)}
\leq C \int_{-\tau}^{0} (u+\tau)^{-\alpha} s^{-\mu} (-u)^{\nu} \rd u 
\leq C s^{-\mu} \tau^{1-\alpha+\nu}.
\end{align*}
Thirdly, \eqref{eq.assF} reveals
\begin{align*}
\| \Psi_3 \|_{L^p(\Omega,\hR^d)}
&\leq C \int_{0}^{s} \int_{0}^{\tau} (u+z)^{-\alpha-1} \rd z \, (s-u)^{-\mu} u^{\nu} \rd u.
\end{align*}
If $\nu \in(\alpha,1)$, then $\int_{0}^{\tau} (u+z)^{-\alpha-1} \rd z \leq C_{\varepsilon} u^{\varepsilon-1-\nu} \tau$. If $\nu =\alpha$, then $\int_{0}^{\tau} (u+z)^{-\alpha-1} \rd z\le u^{\varepsilon-1-\nu} \int_0^\tau(u+z)^{-\varepsilon}\rd z \le C_{\varepsilon}u^{\varepsilon-1-\nu}\tau^{1-\varepsilon}$. Therefore, 
\begin{align*}
\| \Psi_3 \|_{L^p(\Omega,\hR^d)}
\leq
\begin{cases}
C s^{-\mu} \tau, & \mbox{if } \nu \in (\alpha,1), \\
C s^{-\mu} \tau^{1-\varepsilon}, & \mbox{if } \nu = \alpha.
\end{cases}
\end{align*}
When $\nu \in [0, \alpha)$, by \eqref{eq.BetaInte},
\begin{align*}
\| \Psi_3 \|_{L^p(\Omega,\hR^d)}
&\leq C \int_{0}^{s} \int_{0}^{\tau} (u+z)^{\nu-\alpha-1} \rd z \, (s-u)^{-\mu} \rd u \\
&\leq C\left( \int_{0}^{s} (s-u)^{-\mu} u^{\nu-\alpha} \rd u - \int_{0}^{s} (s-u)^{-\mu} (u+\tau)^{\nu-\alpha} \rd u\right) \\
&\leq C \big| s^{1-\mu+\nu-\alpha} - (s+\tau)^{1-\mu+\nu-\alpha} \big| 
 + C \int_{-\tau}^{0} (s-u)^{-\mu} (u+\tau)^{\nu-\alpha} \rd u \\
&\leq C s^{-\mu} \tau^{1-\alpha+\nu}.
\end{align*}
Collecting the estimates of $\{\Psi_i\}_{i=1}^3$ and recalling \eqref{eq.LambdaF2}, we complete the proof.
\end{proof}

\begin{remark} \label{rem.convo}
Under the assumptions of Proposition \ref{prop.convo} with \eqref{eq.assF} replaced by the more general form $\|F(t) - F(s)\|_{L^p(\Omega,\hR^d)} \leq K s^{-\mu} (t-s)^{\nu} + K (t-s)^{q}$, where $q > 0$, one can use a similar proof of Proposition \ref{prop.convo} to obtain
\begin{align*}
\| \Lambda_F(t) - \Lambda_F(s) \|_{L^p(\Omega,\hR^d)}
&\leq C ( t^{1-\alpha} - s^{1-\alpha} ) + C s^{-\mu} (t-s)^{ \min\{1-\alpha+\nu-\varepsilon,\, 1\} } \\
&\quad +
\begin{cases}C (t-s)^{1-\alpha+q}, & \mbox{if } q \neq \alpha, \\
C(t-s)^{1-\varepsilon},& \mbox{if } q = \alpha.
\end{cases}
\end{align*}
\end{remark}

\begin{proof}[Proof of Theorem \ref{thm.newRegu}]
Based on \eqref{eq.assfLip} and \eqref{eq.xHolder}, taking $F = f \circ x$ into Proposition \ref{prop.convo} with $\mu = 0$ and $\nu = \min\{1-\alpha, \frac{1}{2}-\beta\}$ yields
\begin{align} \label{eq.Lambda1}
\left\| \Lambda_{f \circ x}(t) - \Lambda_{f \circ x}(s) \right\|_{L^p(\Omega,\hR^d)} 
\leq C (t^{1-\alpha} - s^{1-\alpha}) + C(t-s)^{\min\{2(1-\alpha),\, \frac{3}{2}-\alpha-\beta,\, 1-\varepsilon\}}.
\end{align}
Then it follows from \eqref{eq.duff} and 
\begin{align*}
t^{1-\alpha} - s^{1-\alpha}
\leq s^{\beta-\frac{1}{2}} t^{\frac{3}{2}-\alpha-\beta} - s^{\beta-\frac{1}{2}} s^{\frac{3}{2}-\alpha-\beta}
\leq C s^{\beta-\frac{1}{2}} (t-s)^{\min\{\frac{3}{2}-\alpha-\beta,\, 1\}}
\end{align*} 
that 
\begin{align} \label{eq.step1}
&\quad\ \| x(t) - x(s) \|_{L^p(\Omega,\hR^d)} \notag\\
&\leq \left\| \Lambda_{f \circ x}(t) - \Lambda_{f \circ x}(s) \right\|_{L^p(\Omega,\hR^d)} + \left\| \Upsilon_{g \circ x}(t) - \Upsilon_{g \circ x}(s) \right\|_{L^p(\Omega,\hR^d)} \notag \\
&\leq C (t^{1-\alpha} - s^{1-\alpha}) + C(t-s)^{\min\{2(1-\alpha),\, \frac{3}{2}-\alpha-\beta,\, 1-\varepsilon\}} + C(t-s)^{\frac{1}{2}-\beta} \notag \\
&\leq C s^{\beta-\frac{1}{2}} (t-s)^{\min\{\frac{3}{2}-\alpha-\beta,\, 1\}} + C(t-s)^{\min\{2(1-\alpha),\, \frac{1}{2}-\beta\}}.
\end{align}
In the estimation of \eqref{eq.Lambda1}, replacing \eqref{eq.xHolder} by \eqref{eq.step1}, and using Remark \ref{rem.convo} with $\mu =\frac{1}{2}- \beta$, $\nu = \min\{\frac{3}{2}-\alpha-\beta, 1\}$ and $q = \min\{2(1-\alpha), \frac{1}{2}-\beta\}$, we have
\begin{align*}
&\quad\, \left\| \Lambda_{f \circ x}(t) - \Lambda_{f \circ x}(s) \right\|_{L^p(\Omega,\hR^d)} \\
&\leq C (t^{1-\alpha} - s^{1-\alpha}) + C s^{\beta-\frac{1}{2}} (t-s)^{\min\{\frac{5}{2}-2\alpha-\beta-\varepsilon,\, 2-\alpha-\varepsilon,\, 1\}} \\
&\quad\, + C (t-s)^{\min\{3(1-\alpha),\, \frac{3}{2}-\alpha-\beta,\, 1-\varepsilon\}} \\
&\leq C s^{\beta-\frac{1}{2}} (t-s)^{\min\{\frac{3}{2}-\alpha-\beta,\, 1\}} + C(t-s)^{\min\{3(1-\alpha),\, \frac{1}{2}-\beta\}}.
\end{align*} 
In this way, we obtain that 
\begin{align} \label{eq.step2} 
\| x(t) - x(s) \|_{L^p(\Omega,\hR^d)} \leq C s^{\beta-\frac{1}{2}} (t-s)^{\min\{\frac{3}{2}-\alpha-\beta,\, 1\}} + C(t-s)^{\min\{3(1-\alpha),\, \frac{1}{2}-\beta\}},
\end{align}
which improves the power ${\min\{2(1-\alpha),\, \frac{1}{2}-\beta\}}$ in \eqref{eq.step1} to ${\min\{3(1-\alpha),\, \frac{1}{2}-\beta\}}$. By repeating these steps from \eqref{eq.step1} to \eqref{eq.step2}, the desired result follows.
\end{proof}

Based on Theorem \ref{thm.newRegu}, one can further use Remark \ref{rem.convo} with $\mu = \beta-\frac{1}{2}$, $\nu = \min\{\frac{3}{2}-\alpha-\beta, 1\}$ and $q = \frac{1}{2}-\beta$ to obtain 
\begin{align} \label{eq.driftSingular}
\left\| \Lambda_{f \circ x}(t) - \Lambda_{f \circ x}(s) \right\|_{L^p(\Omega,\hR^d)}
\leq
\begin{cases}
C s^{\beta-\frac{1}{2}} (t-s)^{\min\{\frac{3}{2}-\alpha-\beta,\, 1\}}, & \mbox{if } \alpha \neq \frac{1}{2}-\beta, \\
C s^{\beta-\frac{1}{2}} (t-s)^{1-\varepsilon}, & \mbox{if } \alpha = \frac{1}{2}-\beta.
\end{cases}
\end{align}

As shown in Theorems \ref{thm.wellpos} and \ref{thm.newRegu}, the solution to \eqref{eq.model} is $(\frac{1}{2}-\beta)$-H\"older continuous on every compact subset of $(0,T]$, while it is only $\min\{1-\alpha, \frac{1}{2}-\beta\}$-H\"older continuous on any neighborhood of $t = 0$. In other words, the solution to \eqref{eq.model} may have an initial singularity in the sense that it exhibits a lower regularity near the initial time $t = 0$ than when $t$ is away from $0$. The initial singularity motivates us to construct the numerical methods for \eqref{eq.model} by using graded meshes in the next section.

\section{Numerical methods}
\label{sec.NumericalMethods}

For the graded mesh $\cM_r$ defined in \eqref{eq:defGM}, denote by $h_n := t_n - t_{n-1}$ $(n= 1, 2, \ldots, N)$ the stepsize. Then
\begin{align} \label{eq.singleSize}
rT N^{-r} (n-1)^{r -1}\leq h_{n}
= T \Big( \big( \frac{n}{N} \big)^r - \big( \frac{n-1}{N} \big)^r \Big)
\leq rT N^{-r} n^{r -1},
\end{align}
which implies
\begin{equation} \label{eq.LaterStepFormerStep}
h_n \leq r 2^{r-1} h_{n-1} \mathbbm{1}_{\{n = 2\}} + 3^{r-1} h_{n-1} \mathbbm{1}_{\{ n \geq 3\}}, 
\end{equation}
since $h_2 \leq r T N^{-r} 2^{r -1} = r 2^{r -1} h_1$ and $h_n \leq rT N^{-r} n^{r -1} \leq 3^{r-1} rT N^{-r} (n-2)^{r -1} \leq 3^{r-1} h_{n-1}$ holds for all $n \geq 3$. For any $k\in\{0,1,\ldots,n-1\}$, 
\begin{align} \label{eq.multiSize}
t_n-t_{k+1}
= T \Big( \big( \frac nN \big)^r - \big( \frac{k+1}{N} \big)^r \Big)
\geq r T N^{-r}(k+1)^{r-1}(n-k-1).
\end{align}
Denote $\hat{s} := t_i$ for $s \in [t_i, t_{i+1})$, $i \in \{0,1,\ldots,N-1\}$. For any $c \in \hR$, it holds that
\begin{align} \label{eq.s}
\hat{s}^c \leq \max\{1,\, 2^{- r c}\} s^c \quad \forall\,s \in [t_1,T).
\end{align}
By \eqref{eq.s} and \eqref{eq.BetaInte}, for any $a, b < 1$,
\begin{align} \label{eq.BetaC}
\int_{t_1}^{t} (t-s)^{-a} \hat{s}^{-b} \rd s 
&\leq \max\{1,\, 2^{r b}\} \int_{t_1}^{t} (t-s)^{-a} s^{-b} \rd s \notag\\
&= \max\{1,\, 2^{r b}\} B(1-a,1-b) (t-t_1)^{1-a-b} \quad \forall\, t \in [t_1,T].
\end{align}

The following discrete Gr\"onwall inequality on graded meshes is helpful for the numerical analysis later, whose detailed proof is provided in Appendix \ref{appendix.Gronwall}.

\begin{lemma} [Gr\"onwall-type inequality] \label{lem.Gronwall}
Let $\mu \in (0,1]$, $\gamma \in (0,1]$, $C_1 > 0$, $C_2 > 0$, $t_n \in \cM_r$ with $n \in \{1,2,\ldots,N\}$ and $r \geq 1$. If the non-negative sequence $\{z_n\}_{n=1}^N$ satisfies
\begin{align*}
z_n \leq C_1 t_n^{\mu-1} + C_2 \sum_{j=1}^{n-1} \int_{t_j}^{t_{j+1}} (t_n-s)^{\gamma-1} z_j \rd s \quad \forall\,n \in \{1,2,\ldots,N\},
\end{align*}
then there exists some constant $C_3 > 0$ independent of $C_1$ such that
\begin{align*}
z_n \leq C_3 C_1t_n^{\mu-1} \quad \forall\,n \in \{1,2,\ldots,N\}.
\end{align*}
\end{lemma}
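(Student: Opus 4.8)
The plan is to prove this discrete Gr\"onwall-type inequality by an iteration (Picard-type bootstrap) argument on the convolution sum, analogous to the way the continuous fractional Gr\"onwall inequality is established. The first step is to rewrite the kernel sum in a more tractable form: since $z_j$ is constant on $[t_j,t_{j+1})$, we have $\sum_{j=1}^{n-1}\int_{t_j}^{t_{j+1}}(t_n-s)^{\gamma-1}z_j\,\rd s = \int_{t_1}^{t_n}(t_n-s)^{\gamma-1}\bar z(s)\,\rd s$ where $\bar z$ is the piecewise-constant interpolant $\bar z(s)=z_j$ for $s\in[t_j,t_{j+1})$. The hypothesis then reads $z_n\le C_1 t_n^{\mu-1}+C_2\int_{t_1}^{t_n}(t_n-s)^{\gamma-1}\bar z(s)\,\rd s$. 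I would substitute this bound for $z_j$ back into the sum and iterate; after $k$ iterations one picks up a term involving a $k$-fold nested fractional integral of $s^{\mu-1}$, which by repeated use of the Beta-integral identity \eqref{eq.BetaInte} (in its discretized form \eqref{eq.BetaC}, which handles the mismatch between $t_j$ and $s$ via the factor $\hat s$) evaluates to something like $C_1 (C_2\Gamma(\gamma))^k/\Gamma(\mu+k\gamma)\cdot t_n^{\mu+k\gamma-1}$ up to harmless $2^{r\cdot}$ constants, plus a remainder term that is a $k$-fold nested integral of $\bar z$ against the $\gamma$-kernel.

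The key technical point in each iteration is that $\int_{t_1}^{t}(t-s)^{\gamma-1}\hat s^{\,\mu+j\gamma-1}\,\rd s \le C\, t^{\mu+(j+1)\gamma-1}$ with a constant controlled by the Beta function $B(\gamma,\mu+j\gamma)$, which I would deduce from \eqref{eq.BetaC} (noting $\mu,\gamma\le 1$ keeps the exponents below $1$ so the Beta function is finite, and the $\max\{1,2^{r\cdot}\}$ factors stay bounded since the exponents lie in a bounded range). One has to be a little careful that after the first substitution the running exponent of $s$ is $\mu+j\gamma-1$ rather than $\mu-1$, and that the constant $\max\{1,2^{r(1-\mu-j\gamma)}\}$ appearing in \eqref{eq.BetaC} is uniformly bounded in $j$ by $\max\{1,2^r\}$; this is what prevents the geometric blow-up of constants. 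Choosing $k=k(n)$ large enough (e.g. $k\gamma>1$, so finitely many terms suffice because $\mu+k\gamma-1\ge 0$ and $t_n\le T$), the dominant part of the partial sum is bounded by $C_1 t_n^{\mu-1}\sum_{k\ge 0}(C_2 \max\{1,2^r\}\Gamma(\gamma)T^\gamma)^k/\Gamma(\mu+k\gamma)$, and this series converges by the standard Mittag-Leffler asymptotics $\Gamma(\mu+k\gamma)^{-1}$ decays faster than any geometric rate, giving a finite $C_3$ independent of $C_1$ (it depends only on $C_2,\gamma,\mu,r,T$).

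The remaining issue is the error/remainder term from truncating the iteration: after $k$ steps it is a constant times a $k$-fold nested $\gamma$-fractional sum applied to the sequence $\{z_j\}$ itself. Since $\{z_n\}$ is a finite (length $N$) sequence, it is a priori bounded by some $M=\max_j z_j<\infty$, and the $k$-fold nested integral of the constant $M$ against the $\gamma$-kernel over $[0,T]$ is at most $M\,(C_2\max\{1,2^r\}\Gamma(\gamma)T^\gamma)^k/\Gamma(1+k\gamma)\to 0$ as $k\to\infty$; so letting $k\to\infty$ kills the remainder and leaves exactly $z_n\le C_3 C_1 t_n^{\mu-1}$. I expect the main obstacle to be the bookkeeping in the induction on $k$ — tracking the exact exponents $\mu+j\gamma-1$ through \eqref{eq.BetaC} and verifying that all the $r$-dependent constants $\max\{1,2^{r(1-\mu-j\gamma)}\}$ are bounded uniformly in $j$ so that the resulting series is a convergent Mittag-Leffler-type series rather than a divergent geometric one; once that uniform bound is in hand, the convergence of $\sum_k x^k/\Gamma(\mu+k\gamma)$ is classical and the conclusion follows. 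An alternative, if one prefers to avoid the infinite iteration, is to prove the bound by strong induction on $n$ directly: assume $z_j\le C_3 C_1 t_j^{\mu-1}$ for $j<n$, plug into the right-hand side, and use \eqref{eq.BetaC} to get $z_n\le C_1 t_n^{\mu-1}+C_2 C_3 C_1\max\{1,2^r\}B(\gamma,\mu)\,t_n^{\mu+\gamma-1}\le C_1 t_n^{\mu-1}(1+C_2C_3\max\{1,2^r\}B(\gamma,\mu)T^\gamma)$ — but this does not close unless one first absorbs the geometric factor, which again forces the Mittag-Leffler summation, so the two approaches amount to the same estimate.
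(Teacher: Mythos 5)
Your overall skeleton (iterate the inequality, evaluate the nested fractional sums by Beta-function identities, control a remainder) is the same family as the paper's, but the proposal glosses over the one step that carries the actual difficulty, and the way you propose to close the argument does not work as stated. The crux is the composition of two \emph{discrete} fractional sums on a graded mesh: after one substitution you face
\begin{align*}
\sum_{j}\int_{t_j}^{t_{j+1}}(t_n-s)^{m\gamma-1}\Big(\sum_{k<j}\int_{t_k}^{t_{k+1}}(t_j-u)^{\gamma-1}z_k\,\rd u\Big)\rd s ,
\end{align*}
where the inner kernel is evaluated at the grid point $t_j=\hat s$, not at $s$. Estimate \eqref{eq.BetaC} handles the mismatch $\hat s^{-b}$ versus $s^{-b}$ in the algebraic weight; it does \emph{not} handle $(t_j-u)^{\gamma-1}$ versus $(s-u)^{\gamma-1}$. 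For $k\le j-2$ one can show $s-u\le(1+3^{r-1})(t_j-u)$ via \eqref{eq.LaterStepFormerStep} and then apply \eqref{eq.BetaInte}, but for the adjacent term $k=j-1$ the ratio $(t_j-u)^{\gamma-1}/(s-u)^{\gamma-1}$ blows up as $u\to t_j$, and a separate argument is required (the paper's term $\cK_1$, which rests on the claim \eqref{eq.calimGronwall} comparing $\int_{t_{k+1}}^{t_{k+2}}(t_n-s)^{m\gamma-1}\rd s$ with $\int_{t_k}^{t_{k+1}}(t_{n-1}-s)^{m\gamma-1}\rd s$ through the graded-mesh step-size comparisons). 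Your proposal does not address this adjacent-interval contribution at all.

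The second problem is how the iteration is summed. The per-step constant produced by the composition above is not the pure Beta factor $B(m\gamma,\gamma)$: it contains an additive piece coming from the adjacent-interval term which is bounded while $m\gamma\le 1$ but grows geometrically once $m\gamma>1$ (the factor $(1+r3^{r-1})^{m\gamma-1}$ inside $C(r,m,\gamma)$). Consequently the infinite Mittag--Leffler series $\sum_k x^k/\Gamma(\mu+k\gamma)$ you invoke, and likewise the claim that the truncation remainder is $O\bigl(M(\cdots)^k/\Gamma(1+k\gamma)\bigr)\to0$, are not justified by the estimates available; the constants do not telescope into $1/\Gamma$-decay. The paper sidesteps this entirely: it iterates only $m_0=\lceil 1/\gamma\rceil$ times, just until the kernel exponent $(m_0+1)\gamma-1$ is nonnegative so the kernel is bounded, and then closes by invoking a separately proved non-singular discrete Gr\"onwall lemma (the $\gamma=1$ case, Lemma \ref{lem.GW1}). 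Some such two-stage structure, or a genuinely sharper discrete semigroup estimate, is needed; as written the proposal has a real gap at both points.
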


\subsection{EM method}

For the equation \eqref{eq.model}, the EM method on $\cM_r$ is defined by
\begin{align} \label{eq.EMscheme}
X_{n} = X_{0} + \sum_{i=0}^{n-1} \int_{t_{i}}^{t_{i+1}}
(t_n-s)^{-\alpha} f(X_{i}) \rd s + \sum_{i=0}^{n-1} \int_{t_{i}}^{t_{i+1}} (t_n-t_i)^{-\beta} g(X_{i}) \rd W(s), 
\end{align}
where $n= 1, 2, \ldots, N$ and $X_0 = x_0$. The following theorem establishes its pointwise-in-time error estimate.

\begin{theorem} \label{thm.EM}
Let $\alpha \in (0,1)$, $\beta \in (0,\frac{1}{2})$, and $r \geq 1$. Under the assumptions of Theorem \ref{thm.wellpos}, for any $p \geq 2$, there exists $C > 0$ such that
\begin{align*}
\| x(t_n) - X_n \|_{L^p(\Omega,\hR^d)} \leq C t_n^{1-\alpha+\rho - (\frac{1}{2}-\beta)/r} N^{-(\frac{1}{2}-\beta)} \quad \forall\, n = 1,2,\ldots,N, 
\end{align*}
where $\rho = \min\{1-\alpha, \frac{1}{2}-\beta\}$. 
\end{theorem}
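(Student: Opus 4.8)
The plan is to derive the error bound by decomposing the error $e_n := x(t_n) - X_n$ into a local truncation part coming from freezing the integrand on each subinterval $[t_i,t_{i+1})$ and a propagated part, and then to close the recursion using the Gr\"onwall-type inequality of Lemma \ref{lem.Gronwall}. First I would write
\begin{align*}
e_n &= \sum_{i=0}^{n-1} \int_{t_i}^{t_{i+1}} (t_n-s)^{-\alpha} \big( f(x(s)) - f(X_i) \big) \rd s \\
&\quad + \sum_{i=0}^{n-1} \int_{t_i}^{t_{i+1}} \big( (t_n-s)^{-\beta} g(x(s)) - (t_n-t_i)^{-\beta} g(X_i) \big) \rd W(s),
\end{align*}
and further split $f(x(s)) - f(X_i) = \big( f(x(s)) - f(x(t_i)) \big) + \big( f(x(t_i)) - f(X_i) \big)$, similarly for $g$, and also separate the drift/diffusion kernel discrepancies $(t_n-s)^{-\alpha} - (t_n-t_i)^{-\alpha}$ etc. The terms involving $f(x(t_i)) - f(X_i)$ and $g(x(t_i)) - g(X_i)$ are controlled by $L\|e_i\|_{L^p}$ via the Lipschitz assumption \eqref{eq.assfLip} and will feed into the Gr\"onwall sum; the remaining terms form the data $C_1 t_n^{\mu-1}$.

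The heart of the matter is bounding the consistency terms sharply enough. For the drift consistency I would use the regularity estimate \eqref{eq.driftSingular} (or Theorem \ref{thm.newRegu}) applied between $s$ and $t_i$: on $[t_i,t_{i+1})$ one has $\|x(s)-x(t_i)\|_{L^p} \le C t_i^{\beta-\frac12}(s-t_i)^{\min\{\frac32-\alpha-\beta,1\}}$ for $i\ge 1$, with the $i=0$ term handled separately using $\|x(s)-x_0\|_{L^p}\le C s^{\min\{1-\alpha,\frac12-\beta\}}$, then integrate against $(t_n-s)^{-\alpha}$ using \eqref{eq.BetaC} and the stepsize bounds \eqref{eq.singleSize}. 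For the diffusion consistency I would invoke the Burkholder--Davis--Gundy inequality to pass to an $L^{p/2}$ bound on $\sum_i \int_{t_i}^{t_{i+1}} |(t_n-s)^{-\beta}g(x(s)) - (t_n-t_i)^{-\beta}g(X_i)|^2 \rd s$, split off the $g(x(t_i))-g(X_i)$ piece, and bound the genuine consistency contribution via Theorem \ref{thm.newRegu} for $\|g(x(s))-g(x(t_i))\|_{L^p}$, the mean value estimate $|(t_n-s)^{-\beta}-(t_n-t_i)^{-\beta}|\le C(t_n-t_{i+1})^{-\beta-1} h_{i+1}$ for $i\le n-2$ together with the last-interval term $\int_{t_{n-1}}^{t_n}(t_n-s)^{-2\beta}\rd s \le C h_n^{1-2\beta}$, and then convert the resulting time-integrals into powers of $t_n$ and $N$ using \eqref{eq.singleSize}, \eqref{eq.multiSize} and \eqref{eq.BetaC}. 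The bookkeeping is where the exponent $1-\alpha+\rho-(\frac12-\beta)/r$ emerges: the factor $t_i^{\beta-\frac12}$ from the regularity estimate, the $h_{i+1}\le rTN^{-r}(i+1)^{r-1}$ bound, and the graded-mesh relation $t_n^{1/r}\sim T^{1/r} n/N$ combine so that the worst local error scales like $N^{-(\frac12-\beta)}$ times a power of $t_n$; one must check that the Beta-integral exponents stay admissible (i.e. the $a,b<1$ hypothesis in \eqref{eq.BetaC}) for all $\alpha\in(0,1)$, $\beta\in(0,\frac12)$, $r\ge1$, possibly absorbing an $\varepsilon$ as in Remark \ref{rem.convo} when a borderline case $\alpha=\frac12-\beta$ occurs.

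Having shown $\|e_n\|_{L^p} \le C_1 t_n^{\mu-1} + C L^p \sum_{j=1}^{n-1}\int_{t_j}^{t_{j+1}}(t_n-s)^{\gamma-1}\|e_j\|_{L^p}\rd s$ with $C_1 = C N^{-(\frac12-\beta)}$, the appropriate $\mu-1 = 1-\alpha+\rho-(\frac12-\beta)/r$ and $\gamma = \min\{1-\alpha, \tfrac12-\beta\}$ (so that $\gamma\in(0,1]$ and, after checking, $\mu\in(0,1]$), Lemma \ref{lem.Gronwall} yields $\|e_n\|_{L^p}\le C_3 C_1 t_n^{\mu-1}$, which is exactly the claimed bound. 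I expect the main obstacle to be the diffusion consistency estimate: correctly handling the interaction between the singular kernel difference $(t_n-s)^{-\beta}-(t_n-t_i)^{-\beta}$, the non-uniform stepsizes, and the $t_i^{\beta-\frac12}$ blow-up from Theorem \ref{thm.newRegu} after the BDG step, so that no spurious loss beyond $N^{-(\frac12-\beta)}$ and the stated power of $t_n$ appears; keeping track of the separate treatment needed for $i=0$ and for the final interval $i=n-1$ (where the kernel is genuinely singular at $s=t_n$) is the delicate part of the argument.
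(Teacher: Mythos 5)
Your decomposition (consistency plus propagation, BDG for the stochastic integrals, the kernel discrepancy $(t_n-s)^{-\beta}-(t_n-t_i)^{-\beta}$ handled by a mean-value/last-interval split, Theorem \ref{thm.newRegu} for the consistency terms, and a Gr\"onwall closure) is exactly the paper's route. Two of your steps, however, would not go through as written. The first is the closing recursion: after the BDG inequality the diffusion propagation term is controlled by $\bigl(\int_0^{t_n}(t_n-\hat s)^{-2\beta}\|x(\hat s)-X(\hat s)\|_{L^p}^2\,\rd s\bigr)^{1/2}$, an $\ell^2$-type expression in the $\|e_j\|$ that cannot be dominated by the linear sum $\sum_j\int_{t_j}^{t_{j+1}}(t_n-s)^{\gamma-1}\|e_j\|\,\rd s$ you feed into Lemma \ref{lem.Gronwall} (Cauchy--Schwarz goes the wrong way). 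The paper instead closes the argument for the \emph{squared} error, deriving $\|e_n\|^2 \le C\bigl(t_n^{1-\alpha+\rho-(\frac12-\beta)/r}N^{-(\frac12-\beta)}\bigr)^2 + C\int_0^{t_n}(t_n-s)^{-\max\{\alpha,2\beta\}}\|e_{\hat s}\|^2\,\rd s$ and applying Lemma \ref{lem.Gronwall} to $z_n=\|e_n\|^2$ with $\gamma=1-\max\{\alpha,2\beta\}$; you need to do the same.

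The second issue is the regularity input. Theorem \ref{thm.newRegu} gives $\|x(s)-x(t_i)\|_{L^p}\le Ct_i^{\beta-\frac12}(s-t_i)^{\sigma}+C(s-t_i)^{\frac12-\beta}$ with $\sigma=\min\{\frac32-\alpha-\beta,1\}$; the single-term bound $Ct_i^{\beta-\frac12}(s-t_i)^{\sigma}$ you quote is false, since $\sigma>\frac12-\beta$ would force $(s-t_i)^{\frac12-\beta}\le Ct_i^{\beta-\frac12}(s-t_i)^{\sigma}$, which fails as $s\downarrow t_i$. The extra $(s-t_i)^{\frac12-\beta}$ term must be carried through the drift consistency estimate; it contributes $Ct_n^{1-\alpha}N^{-(\frac12-\beta)}$, which is still absorbed into the target bound because $t_n^{1-\alpha}\le Ct_n^{1-\alpha+\rho-(\frac12-\beta)/r}$ on $[0,T]$. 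Finally, the technical core of the paper's proof --- the weighted estimate $\int_{t_1}^{t_n}(t_n-s)^{-\alpha}\hat s^{\beta-\frac12}(s-\hat s)^{\sigma}\,\rd s\le Ct_n^{\frac12-\alpha+\beta+\sigma-\kappa/r}N^{-\kappa}$, obtained by splitting the sum at $k=\lceil n/2\rceil$ and invoking \eqref{eq.singleSize} and \eqref{eq.multiSize} --- is only gestured at in your ``bookkeeping'' paragraph; this is precisely where the stated power of $t_n$ is produced, and it is not automatic.
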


Building on Theorem \ref{thm.EM}, one has the following uniform-in-time error estimates.

\begin{corollary} \label{cor.EMres}
Let the assumptions of Theorem \ref{thm.EM} hold and
\begin{align*}
r^{\,}_{\textup{EM}} := \max\left\{ \frac{\frac{1}{2}-\beta}{1-\alpha+\rho},\, 1 \right\}.
\end{align*}
Then for any $p \geq 2$, there exists $C > 0$ satisfying:\
\begin{itemize}
\item[(1)] when $r = 1$,
\begin{align*} 
 \sup_{1 \leq n \leq N} \| x(t_n) - X_n \|_{L^p(\Omega,\hR^d)} \leq C N^{ - \min\{2(1-\alpha),\, \frac{1}{2}-\beta\} };
 \end{align*}

\item[(2)] when $r = r^{\,}_{\textup{EM}}$,
\begin{align*} 
\sup_{1 \leq n \leq N} \| x(t_n) - X_n \|_{L^p(\Omega,\hR^d)} \leq C N^{-(\frac{1}{2}-\beta)}.
\end{align*}
\end{itemize}
\end{corollary}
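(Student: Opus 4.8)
The plan is to deduce Corollary~\ref{cor.EMres} directly from the pointwise estimate in Theorem~\ref{thm.EM} by optimizing over the exponent of $t_n$ in the factor $t_n^{1-\alpha+\rho-(\frac12-\beta)/r}$, taking $N\to\infty$ only at the end. Write $\theta(r):=1-\alpha+\rho-(\tfrac12-\beta)/r$, so that Theorem~\ref{thm.EM} reads $\|x(t_n)-X_n\|_{L^p}\le C\,t_n^{\theta(r)}N^{-(\frac12-\beta)}$. The key observation is that $\theta$ is increasing in $r$, and that $\sup_{1\le n\le N} t_n^{\theta(r)}$ behaves very differently according to the sign of $\theta(r)$: if $\theta(r)\ge 0$ the supremum over $n$ is attained at $n=N$, where $t_N=T$, giving a clean $O(N^{-(\frac12-\beta)})$ bound; if $\theta(r)<0$ the supremum is attained at $n=1$, where $t_1=TN^{-r}$, and then $t_1^{\theta(r)}=T^{\theta(r)}N^{-r\theta(r)}$, so the overall order degrades to $N^{-(\frac12-\beta)-r\theta(r)}$.

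Concretely, for part~(2) I would check that $r=r_{\textup{EM}}^{\,}$ is exactly the threshold making $\theta(r)\ge 0$: indeed $\theta(r)\ge 0\iff r\ge \frac{\frac12-\beta}{1-\alpha+\rho}$, and since $r_{\textup{EM}}^{\,}=\max\{\frac{\frac12-\beta}{1-\alpha+\rho},1\}$ we have $\theta(r_{\textup{EM}}^{\,})\ge 0$ together with $r_{\textup{EM}}^{\,}\ge 1$, so $\cM_{r_{\textup{EM}}^{\,}}$ is an admissible graded mesh. Then $t_n^{\theta(r_{\textup{EM}}^{\,})}\le \max\{T^{\theta(r_{\textup{EM}}^{\,})},1\}$ uniformly in $n$ (or, more carefully, $t_n\le T$ and a nonnegative power of $t_n$ is bounded by $\max\{T^{\theta},1\}$), and Theorem~\ref{thm.EM} immediately gives $\sup_{1\le n\le N}\|x(t_n)-X_n\|_{L^p}\le C N^{-(\frac12-\beta)}$, as claimed. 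For part~(1), I would specialize to $r=1$: here $\theta(1)=1-\alpha+\rho-(\tfrac12-\beta)$, and I would split into the two cases defining $\rho$. If $\rho=\tfrac12-\beta$ (i.e.\ $1-\alpha\ge\tfrac12-\beta$), then $\theta(1)=1-\alpha\ge 0$, so the $n=N$ analysis gives order $\tfrac12-\beta=\min\{2(1-\alpha),\tfrac12-\beta\}$ since in this regime $2(1-\alpha)\ge 1-\alpha\ge\tfrac12-\beta$. If $\rho=1-\alpha$ (i.e.\ $1-\alpha<\tfrac12-\beta$), then $\theta(1)=2(1-\alpha)-(\tfrac12-\beta)$, which may be negative; using $t_n\le T$ when $\theta(1)\ge0$ and $t_n\ge t_1=T/N$ when $\theta(1)<0$, one gets in either subcase $\sup_n t_n^{\theta(1)}N^{-(\frac12-\beta)}\le C N^{-\min\{\frac12-\beta,\,(\frac12-\beta)-\theta(1)\}}=CN^{-\min\{\frac12-\beta,\,2(1-\alpha)\}}$, which is the stated rate.

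The only mildly delicate point is bookkeeping the sign of $\theta(r)$ and combining it correctly with $t_n\in[TN^{-r},T]$ to produce a single exponent; this is the step I would write out most carefully, presenting it once as a lemma-style computation: for $r\ge1$ and any real $a$, $\sup_{1\le n\le N}t_n^{a}=\max\{T^a,\,T^aN^{-ra}\}$, hence $\sup_{1\le n\le N}t_n^{a}N^{-(\frac12-\beta)}\le C N^{-(\frac12-\beta)-\min\{0,\,ra\}}=C N^{-\min\{\frac12-\beta,\,(\frac12-\beta)+ra\}}$ with $a=\theta(r)$. Everything else is substitution. I do not anticipate a genuine obstacle here, since all the analytic work has already been done in Theorem~\ref{thm.EM}; the corollary is a clean optimization of the grading exponent, and the main care is just to verify that the optimal $r_{\textup{EM}}^{\,}$ is $\ge 1$ (so the mesh is legitimate) and that at $r=r_{\textup{EM}}^{\,}$ the $t_n$-exponent is nonnegative, which is exactly how $r_{\textup{EM}}^{\,}$ was defined.
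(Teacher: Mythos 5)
Your proposal is correct and is exactly the intended deduction: the paper states Corollary \ref{cor.EMres} as an immediate consequence of the pointwise bound in Theorem \ref{thm.EM}, and your case analysis on the sign of the exponent $\theta(r)=1-\alpha+\rho-(\tfrac12-\beta)/r$, together with $t_n\in[TN^{-r},T]$, fills in precisely the omitted bookkeeping (in particular, the identity $(\tfrac12-\beta)+\theta(1)=1-\alpha+\rho$ reduces to $2(1-\alpha)$ exactly when $\rho=1-\alpha$, which recovers the stated rate $\min\{2(1-\alpha),\tfrac12-\beta\}$ for $r=1$). No gaps.
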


In fact, Corollary \ref{cor.EMres}(1) improves the result of order $\min\{ 1-\alpha, \frac{1}{2}-\beta \}$ in the literature. We emphasize that even for the uniform mesh case, the novel regularity estimate (i.e., Theorem \ref{thm.newRegu}) is still helpful for the error analysis of the EM method. Besides, the uniform-in-time convergence order $\frac{1}{2}-\beta$ in Corollary \ref{cor.EMres}(2) is optimal for the EM method of \eqref{eq.model}, in view of \cite[Corollary 2.1]{NualartSaikia2022}.

\begin{proof}[Proof of Theorem \ref{thm.EM}]
We define the continuous-time version of the EM method \eqref{eq.EMscheme} by
\begin{align} \label{eq.cont-EMscheme}
X(t) = X_{0} + \int_{0}^{t} (t-s)^{-\alpha} f(X(\hat{s})) \rd s + \int_{0}^{t} (t-\hat{s})^{-\beta} g(X(\hat{s})) \rd W(s), ~~ t \in [0,T]. 
\end{align} 
Note that $X(t_n) = X_n$ for all $t_n \in \cM_r$. According to \eqref{eq.model} and \eqref{eq.cont-EMscheme}, 
\begin{align*}
x(t_n) - X_n
&= \int_{0}^{t_n} (t_n-s)^{-\alpha} \big( f(x(s)) - f(X(\hat{s})) \big) \rd s \\
&\quad + \int_{0}^{t_n} (t_n-s)^{-\beta} g(x(s)) - (t_n-\hat{s})^{-\beta} g(X(\hat{s})) \rd W(s).
\end{align*}
Then the triangle inequality yields $\| x(t_n) - X_n \|_{L^p(\Omega,\hR^d)} \leq \sum_{l=1}^5\cJ_l$ with
\begin{align*}
\cJ_1 &:= \left\| \int_{0}^{t_n} (t_n-s)^{-\alpha} \big( f(x(s)) - f(x(\hat{s})) \big) \rd s \right\|_{L^p(\Omega,\hR^d)}, \\
\cJ_2 &:= \left\| \int_{0}^{t_n} (t_n-s)^{-\alpha} \big( f(x(\hat{s})) - f(X(\hat{s})) \big) \rd s \right\|_{L^p(\Omega,\hR^d)}, \\
\cJ_3 &:= \left\| \int_{0}^{t_n} (t_n-s)^{-\beta} \big( g(x(s)) - g(x(\hat{s})) \big) \rd W(s) \right\|_{L^p(\Omega,\hR^d)}, \\
\cJ_4 &:= \left\| \int_{0}^{t_n} \big( (t_n-s)^{-\beta} - (t_n-\hat{s})^{-\beta} \big) g(x(\hat{s})) \rd W(s) \right\|_{L^p(\Omega,\hR^d)}, \\
\cJ_5 &:= \left\| \int_{0}^{t_n} (t_n-\hat{s})^{-\beta} \big( g(x(\hat{s})) - g(X(\hat{s})) \big) \rd W(s) \right\|_{L^p(\Omega,\hR^d)}.
\end{align*}

For $\cJ_1$, using \eqref{eq.assfLip}, \eqref{eq.xHolder} and Theorem \ref{thm.newRegu} indicates 
\begin{align*}
\cJ_1
&\leq C \left( \int_{0}^{t_1} + \int_{t_1}^{t_n} \right) (t_n-s)^{-\alpha} \left\| x(s) - x(\hat{s}) \right\|_{L^p(\Omega,\hR^d)} \rd s \\ 
&\leq C \int_{0}^{t_1} (t_n-s)^{-\alpha} s^{\rho} \rd s + C \int_{t_1}^{t_n} (t_n-s)^{-\alpha} \big( \hat{s}^{\beta-\frac{1}{2}} (s-\hat{s})^{\sigma} + (s-\hat{s})^{\frac{1}{2}-\beta} \big) \rd s, 
\end{align*}
where $\rho = \min\{1-\alpha, \frac{1}{2}-\beta\}$ and $\sigma = \min\{\frac{3}{2}-\alpha-\beta, 1\}$. By $(\frac{1}{2}-\beta)-(\rho+1) r < 0$,
\begin{align}
\int_{0}^{t_1} (t_n-s)^{-\alpha} s^{\rho} \rd s
&\leq C t_n^{-\alpha} h_1^{\rho+1} 
\leq C (n/N)^{-\alpha r} N^{-(\rho+1) r} \notag\\
&= C (n/N)^{(1-\alpha+\rho) r -(\frac{1}{2}-\beta)} N^{-(\frac{1}{2}-\beta)} n^{(\frac{1}{2}-\beta)-(\rho+1) r} \notag\\
&\leq C t_n^{1-\alpha+\rho - (\frac{1}{2}-\beta)/r} N^{-(\frac{1}{2}-\beta)}. \label{eq.rho} 
\end{align}
Let $\kappa \in \{\frac{1}{2}-\beta, \sigma\}$. We claim that 
\begin{align} \label{eq.kappa}
 \int_{t_1}^{t_n} (t_n-s)^{-\alpha} \hat{s}^{\beta-\frac{1}{2}} (s-\hat{s})^{\sigma} \rd s
\leq C t_n^{\frac{1}{2}-\alpha+\beta+\sigma - \kappa/r} N^{-\kappa}. 
\end{align}
To prove \eqref{eq.kappa}, we formulate
\begin{align*}
&\quad\ \int_{t_1}^{t_n} (t_n-s)^{-\alpha} \hat{s}^{\beta-\frac{1}{2}} (s-\hat{s})^{\sigma} \rd s \\
&= \left( \sum_{k=1}^{\lceil n/2 \rceil-1} + \sum_{k = \lceil n/2 \rceil}^{n-2} \right) \int_{t_k}^{t_{k+1}} (t_n-s)^{-\alpha} t_k^{\beta-\frac{1}{2}} (s-t_k)^{\sigma} \rd s \\
&\quad + \int_{t_{n-1}}^{t_{n}} (t_n-s)^{-\alpha} t_{n-1}^{\beta-\frac{1}{2}} (s-t_{n-1})^{\sigma} \rd s.
\end{align*}
Then, by \eqref{eq.singleSize} and $r \geq 1$, we have
\begin{align*}
&\quad \sum_{k=1}^{\lceil n/2 \rceil-1} \int_{t_k}^{t_{k+1}} (t_n-s)^{-\alpha} t_k^{\beta-\frac{1}{2}} (s-t_k)^{\sigma} \rd s 
\leq C t_n^{-\alpha} \sum_{k=1}^{\lceil n/2 \rceil-1} t_k^{\beta-\frac{1}{2}} h_{k+1}^{\sigma+1} \\
& \leq C (n/N)^{-\alpha r} \sum_{k=1}^{\lceil n/2 \rceil-1} (k/N)^{(\beta-\frac{1}{2}) r} ( (k+1)^{r -1} N^{-r} )^{\sigma+1} \\
& = C n^{-\alpha r} N^{(\alpha-\beta-\sigma-\frac{1}{2}) r} \sum_{k=1}^{\lceil n/2 \rceil-1} k^{(\beta+\sigma+\frac{1}{2}) r -\sigma-1} 
\leq C t_n^{\frac{1}{2}-\alpha+\beta+\sigma - \kappa/r} N^{-\kappa}
\end{align*}
and 
\begin{align*}
&\quad\, \int_{t_{n-1}}^{t_{n}} (t_n-s)^{-\alpha} t_{n-1}^{\beta-\frac{1}{2}} (s-t_{n-1})^{\sigma} \rd s
\leq C t_{n}^{\beta-\frac{1}{2}} h_{n}^{1-\alpha+\sigma} \\
& \leq C (n/N)^{(\beta-\frac{1}{2}) r} (N^{-r} n^{r -1})^{1-\alpha+\sigma} 
\leq C t_n^{\frac{1}{2}-\alpha+\beta+\sigma - \kappa/r} N^{-\kappa}.
\end{align*} 
In addition, it follows from 
\begin{align*}
&\quad\, n^{-(1-\alpha+\sigma)r+\kappa} \sum_{k = \lceil n/2 \rceil}^{n-2} (k+1)^{-\alpha(r-1)} k^{(\sigma+1)(r -1)} (n-k-1)^{-\alpha} \\
& \leq n^{-(1-\alpha+\sigma)r+\kappa} \sum_{k = \lceil n/2 \rceil}^{n-2}
(k+1)^{(1-\alpha+\sigma)(r-1)} (n-k-1)^{-\alpha} \\
& \leq n^{-(1-\alpha+\sigma)+\kappa} \sum_{k = \lceil n/2 \rceil}^{n-2} (n-k-1)^{-\alpha} 
\leq C,
\end{align*}
\eqref{eq.singleSize} and \eqref{eq.multiSize} that 
\begin{align*}
&\quad\, \sum_{k = \lceil n/2 \rceil}^{n-2} \int_{t_k}^{t_{k+1}} (t_n-s)^{-\alpha} t_k^{\beta-\frac{1}{2}} (s-t_k)^{\sigma} \rd s \\
&\leq C \sum_{k = \lceil n/2 \rceil}^{n-2} \int_{t_k}^{t_{k+1}} (t_n-t_{k+1})^{-\alpha} t_n^{\beta-\frac{1}{2}} (t_{k+1}-t_k)^{\sigma} \rd s \\
&\leq C \sum_{k = \lceil n/2 \rceil}^{n-2} \big( N^{-r} (k+1)^{r-1} (n-k-1) \big)^{-\alpha} \big( n/N \big)^{(\beta-\frac{1}{2}) r} \big( k^{r -1} N^{-r} \big)^{\sigma+1} \\
&= C (n/N)^{(\frac{1}{2}-\alpha+\beta+\sigma) r -\kappa} N^{-\kappa} \\
&\quad \times n^{-(1-\alpha+\sigma)r+\kappa} \sum_{k = \lceil n/2 \rceil}^{n-2} (k+1)^{-\alpha(r-1)} k^{(\sigma+1)(r -1)} (n-k-1)^{-\alpha} \\
&\leq C t_n^{\frac{1}{2}-\alpha+\beta+\sigma - \kappa/r} N^{-\kappa}.
\end{align*} 
Hence \eqref{eq.kappa} holds. Based on \eqref{eq.rho} and \eqref{eq.kappa} with $\kappa = \frac{1}{2}-\beta$, one can read 
\begin{align*}
\cJ_1 
&\leq C t_n^{1-\alpha+\rho - (\frac{1}{2}-\beta)/r} N^{-(\frac{1}{2}-\beta)} + C t_n^{\frac{1}{2}-\alpha+\beta+\sigma - (\frac{1}{2}-\beta)/r} N^{-(\frac{1}{2}-\beta)} + C t_n^{1-\alpha} N^{-(\frac{1}{2}-\beta)} \\
&\leq C t_n^{1-\alpha+\rho - (\frac{1}{2}-\beta)/r} N^{-(\frac{1}{2}-\beta)}, 
\end{align*}
since $\sigma \geq \rho+\frac{1}{2}-\beta$. 

For $\cJ_2$ and $\cJ_5$, it follows from the H\"older inequality, Burkholder--Davis--Gundy (BDG) inequality and \eqref{eq.assfLip} that 
\begin{align}\label{eq:J2J5}
|\cJ_2|^2+ |\cJ_5|^2 \leq C \int_{0}^{t_n} (t_n-s)^{- \max\{ \alpha, 2\beta \}} \| x(\hat{s}) - X(\hat{s}) \|_{L^p(\Omega,\hR^d)}^2 \rd s. 
\end{align}
For $\cJ_3$, by the BDG inequality, \eqref{eq.assfLip}, \eqref{eq.xHolder}, Theorem \ref{thm.newRegu} and \eqref{eq.s},
\begin{align*}
|\cJ_3|^2 
&\leq C \int_{0}^{t_n} (t_n-s)^{-2\beta} \| x(s) - x(\hat{s}) \|_{L^p(\Omega,\hR^d)}^2 \rd s \\
& \leq C \int_{0}^{t_1} (t_1-s)^{-2\beta} s^{2\rho} \rd s+ C \int_{t_1}^{t_n} (t_n-s)^{-2\beta}
s^{2\beta-1} (s-\hat{s})^{1-2\beta} \rd s \\
&\leq C N^{-(1-2\beta)}. 
\end{align*} 
In virtue of \eqref{eq.singleSize}, we have $h_{n} \leq CN^{-1}$, and then 
\begin{align}\label{eq.diffSqureSingle}
&\quad\, \int_{0}^{t_n} \big| (t_n-s)^{-\beta} - (t_n-\hat{s})^{-\beta} \big|^2 \rd s \\
&= \int_0^{t_{n-1}} \big| (t_n-s)^{-\beta} - (t_n-\hat{s})^{-\beta} \big|^2 \rd s + \int_{t_{n-1}}^{t_n} \big| (t_n-s)^{-\beta} - (t_n-\hat{s})^{-\beta} \big|^2 \rd s \notag\\
&\leq C \int_0^{t_{n-1}} \Big| \int_{\hat{s}}^s (t_n-u)^{-(1+\beta)} \rd u \Big|^2 \rd s + \int_{t_{n-1}}^{t_n} (t_n-s)^{-2\beta} \rd s \notag\\
&\leq C \int_0^{t_{n-1}} (t_n-s)^{-(2+\beta)} \Big| \int_{\hat{s}}^s (t_n-u)^{-\frac{\beta}{2}} \rd u \Big|^2 \rd s + C h_{n}^{1-2\beta} \notag\\
&\leq C h_n^{2-\beta} \int_0^{t_{n-1}} (t_n-s)^{-(2+\beta)} \rd s + C N^{-(1-2\beta)} \leq C N^{-(1-2\beta)}. \notag
\end{align}
The BDG inequality, Theorem \ref{thm.wellpos} and \eqref{eq.diffSqureSingle} imply $|\cJ_4|^2 \leq C N^{-(1-2\beta)}$. Collecting the above estimates for $\{\cJ_i\}_{i=1}^5$ and using the Cauchy--Schwarz inequality read 
\begin{align*}
\| x(t_n) - X(t_n) \|_{L^p(\Omega,\hR^d)}^2
&\leq C \left( t_n^{1-\alpha+\rho - (\frac{1}{2}-\beta)/r} N^{-(\frac{1}{2}-\beta)} \right)^2 \\&\quad+ C \int_{0}^{t_n} (t_n-s)^{-\max\{ \alpha, 2\beta \}} \left\| x(\hat{s}) - X(\hat{s}) \right\|_{L^p(\Omega,\hR^d)}^2 \rd s. \end{align*}
Finally, applying the Gr\"onwall inequality (i.e., Lemma \ref{lem.Gronwall}) completes the proof.
\end{proof}

\subsection{Fast EM method}

Due to the memory of kernels, the implementation of a single path of the EM method \eqref{eq.EMscheme} needs a computational cost of $\cO\big( N^2 \big)$. To reduce this cost, we next introduce a fast variant of the EM method, whose construction will use the following sum-of-exponentials approximation presented in \cite{JiangZhang2017}.

\begin{lemma} \label{lem.SOE}
Let $\gamma \in (0,1)$, tolerance error $\epsilon \ll 1$, and truncation $\delta \in (0,T]$ be arbitrary. Then there exist positive numbers $\{ \tau_k \}_{k=1}^{N_{\exp}}$ and $\{ \omega_k \}_{k=1}^{N_{\exp}}$ such that 
\begin{align*}
\bigg| t^{-\gamma} - \sum_{k=1}^{N_{\exp}} \omega_k e^{-\tau_k t} \bigg| \leq \epsilon \quad \forall\, t\in[\delta,T],
\end{align*}
where $N_{\exp}$ is a positive integer satisfying
\begin{align*}
N_{\exp} = \cO\Big( \log\frac{1}{\epsilon}\big( \log\log\frac{1}{\epsilon} + \log\frac{T}{\delta} \big) + \log\frac{1}{\delta} \big(\log\log\frac{1}{\epsilon} + \log\frac{1}{\delta}\big) \Big).
\end{align*}
\end{lemma}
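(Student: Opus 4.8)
Although Lemma~\ref{lem.SOE} is quoted from \cite{JiangZhang2017}, we record the approach one would take to prove it. The plan is to start from the Gamma-function representation
\begin{align*}
t^{-\gamma} = \frac{1}{\Gamma(\gamma)} \int_{0}^{\infty} e^{-ts}\, s^{\gamma-1} \, \rd s, \qquad t > 0,
\end{align*}
and to discretize the right-hand side by quadrature: any rule $\int_{0}^{\infty} \phi(s)\,\rd s \approx \sum_{k} \lambda_k \phi(s_k)$ with nodes $s_k>0$ and positive weights $\lambda_k$, applied to $\phi(s)=e^{-ts}s^{\gamma-1}$, turns $t^{-\gamma}$ into $\sum_k \omega_k e^{-\tau_k t}$ with $\tau_k := s_k > 0$ and $\omega_k := \lambda_k s_k^{\gamma-1}/\Gamma(\gamma) > 0$, so the asserted positivity is automatic and everything reduces to building a quadrature for the positive weight $s^{\gamma-1} e^{-ts}$ on $(0,\infty)$ whose error is at most $\epsilon$ uniformly over $t \in [\delta,T]$, with as few nodes as possible.

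Next one would split $(0,\infty) = (0,2^{-M}] \cup (2^{-M}, 2^{N}] \cup (2^{N},\infty)$ and treat the three pieces differently. On the far tail $(2^{N},\infty)$ one uses $e^{-ts} \le e^{-\delta s}$ to get $\int_{2^{N}}^{\infty} e^{-ts} s^{\gamma-1}\,\rd s \le \int_{2^{N}}^{\infty} e^{-\delta s} s^{\gamma-1}\,\rd s$, which is below $\epsilon/3$ once $\delta 2^{N} \gtrsim \log\frac1\epsilon$, i.e.\ for $N$ of order $\log\log\frac1\epsilon + \log\frac1\delta$; this piece is then simply discarded. On the near piece $(0,2^{-M}]$ the only obstruction to a high-order rule is the weak singularity $s^{\gamma-1}$ (integrable since $\gamma<1$), the factor $e^{-ts}$ being analytic and, because $ts \le T 2^{-M}$ is bounded once $M \gtrsim \log T$, uniformly smooth for $t\le T$; a single Gauss--Jacobi rule on $(0,2^{-M}]$ with the weight $s^{\gamma-1}$ then has geometrically decaying error, so $O(\log\frac1\epsilon)$ nodes suffice. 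Finally the bulk $(2^{-M},2^{N}]$ is subdivided into its $O(M+N)$ dyadic subintervals $(2^{j},2^{j+1}]$; on each of these the integrand is analytic, and after rescaling $s = 2^{j}\sigma$ the $t$-dependence collapses to the single parameter $2^{j}t$, reducing the remainder estimate to a one-parameter family of smooth functions on a fixed reference interval, for which an $n$-point Gauss--Legendre rule has error $\lesssim c^{-n}$ with an absolute $c>1$; choosing $n=O(\log\frac1\epsilon)$ per subinterval brings the total bulk error below $\epsilon/3$.

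Adding the node counts over the three regions and inserting the truncation-driven choices of $M$ and $N$ gives a bound of the form $N_{\exp} = O\big((M+N)\log\frac1\epsilon + \log\frac1\epsilon\big)$, which after grouping terms yields the stated estimate; the precise shape of the bound (in particular the $(\log\frac1\delta)^2$-type terms) reflects in addition a region-dependent, rather than uniform, calibration of the number of Gauss nodes on each dyadic subinterval to that subinterval's contribution. The step where genuine care is needed is the \emph{uniform-in-$t$} control of the quadrature error: one must verify that the Bernstein ellipse of analyticity of the rescaled integrands --- equivalently the geometric rate $c$ and the constants in the Gauss--Legendre and Gauss--Jacobi remainder formulas --- does not degenerate as $t$ sweeps all of $[\delta,T]$, notably on the outer dyadic subintervals where $ts$ runs from $O(1)$ up to order $\log\frac1\epsilon$; the rescaling $s=2^{j}\sigma$ is precisely what makes this manageable. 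A self-contained proof is given in \cite{JiangZhang2017}.
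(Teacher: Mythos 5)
The paper offers no proof of this lemma: it is imported verbatim from \cite{JiangZhang2017}, so there is no internal argument to compare against. Your outline faithfully reproduces the strategy of that reference --- the representation $t^{-\gamma}=\frac{1}{\Gamma(\gamma)}\int_0^\infty e^{-ts}s^{\gamma-1}\,\rd s$, truncation of the far tail using $t\ge\delta$, Gauss--Jacobi quadrature on the weakly singular piece near $s=0$, and dyadically rescaled Gauss--Legendre rules on the bulk with a region-dependent node count --- and it correctly flags the uniform-in-$t$ analyticity estimate as the only delicate point, so it is an accurate sketch of the standard proof rather than a divergence from the paper.
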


This lemma tells us that the kernel function $t^{-\gamma}$ (provided away from the singularity point zero) can be approximated by a summation of exponential functions under a specified precision $\epsilon \ll 1$. Then the EM method \eqref{eq.EMscheme} can be modified into
\begin{align*}
\widetilde{X}_{n} 
&= \widetilde{X}_{0} + \sum_{i=0}^{n-2} \int_{t_{i}}^{t_{i+1}} \sum_{k=1}^{N_{\exp,\alpha}} \omega_{k,\alpha} e^{-\tau_{k,\alpha} (t_n-s)} f(\widetilde{X}_{i}) \rd s + \int_{t_{n-1}}^{t_{n}} (t_n-s)^{-\alpha} f(\widetilde{X}_{n-1}) \rd s \notag \\
&\quad + \sum_{i=0}^{n-2} \int_{t_{i}}^{t_{i+1}} \sum_{k=1}^{N_{\exp,\beta}} \omega_{k,\beta} e^{-\tau_{k,\beta} (t_n-t_i)} g(\widetilde{X}_{i}) \rd W(s) \\
&\quad + \int_{t_{n-1}}^{t_{n}} (t_n-t_{n-1})^{-\beta} g(\widetilde{X}_{n-1}) \rd W(s)
\end{align*}
for $n= 2, 3, \ldots, N$ with $\widetilde{X}_0 = X_0 = x_0$ and $\widetilde{X}_1 = X_1$. Here, $\{\tau_{k,\alpha}, \omega_{k,\alpha}\}_{k=1}^{N_{\exp,\alpha}}$ and $\{\tau_{k,\beta}, \omega_{k,\beta}\}_{k=1}^{N_{\exp,\beta}}$ correspond to the parameters of the sum-of-exponentials approximations of $(t_n-s)^{-\alpha}$ (where $s \in [0,t_{n-1}]$) and $(t_n-t_i)^{-\beta}$ (where $0\leq i \leq n-2$), respectively. Exchanging the summations order and denoting
\begin{align*}
\zeta_{k,\alpha}^n
& :=
\begin{cases}
0, &\mbox{\ if } n = 1, \\
\sum_{i=0}^{n-2} \int_{t_i}^{t_{i+1}} e^{-\tau_{k,\alpha} (t_n-s)} f(\widetilde{X}_{i}) \rd s, &\mbox{\ if } n \geq 2, \\
\end{cases} \\
\zeta_{k,\beta}^n
& :=
\begin{cases}
0, &\mbox{\ if } n = 1, \\
\sum_{i=0}^{n-2} \int_{t_i}^{t_{i+1}} e^{-\tau_{k,\beta} (t_n-t_i)} g(\widetilde{X}_{i}) \rd W(s), &\mbox{\ if } n \geq 2, 
\end{cases} 
\end{align*}
we can construct the fast EM method:\ for $n= 1, 2, \ldots, N$, 
\begin{align} \label{eq.fastEMscheme}
\widetilde{X}_{n} 
&= \widetilde{X}_{0} + \sum_{k=1}^{N_{\exp,\alpha}} \omega_{k,\alpha} \zeta_{k,\alpha}^n + \int_{t_{n-1}}^{t_{n}} (t_n-s)^{-\alpha} f(\widetilde{X}_{n-1}) \rd s \notag\\
&\quad + \sum_{k=1}^{N_{\exp,\beta}} \omega_{k,\beta} \zeta_{k,\beta}^n + \int_{t_{n-1}}^{t_{n}} (t_n-t_{n-1})^{-\beta} g(\widetilde{X}_{n-1}) \rd W(s). 
\end{align}

\begin{remark} 
For the fast EM method \eqref{eq.fastEMscheme} on the graded mesh $\cM_r$, if we take $\delta = h_1$ and $\epsilon = N^{-(\frac{1}{2}-\beta)}$, then Lemma \ref{lem.SOE} implies $N_{\exp,\alpha} = \cO\big( (\log N)^2 \big)$ and $N_{\exp,\beta} = \cO\big( (\log N)^2 \big)$. On the other hand, the semi-group property of exponential functions shows that for all $n \in \{2,3,\ldots,N\}$, the recurrence relations
\begin{align*}
\zeta_{k,\alpha}^n &= e^{-\tau_{k,\alpha}h_n} \zeta_{k,\alpha}^{n-1} + \int_{t_{n-2}}^{t_{n-1}} e^{-\tau_{k,\alpha} (t_n-s)} f(\widetilde{X}_{n-2}) \rd s, \\
\zeta_{k,\beta}^n &= e^{-\tau_{k,\beta}h_n} \zeta_{k,\beta}^{n-1} + \int_{t_{n-2}}^{t_{n-1}} e^{-\tau_{k,\beta} (t_n-t_{n-2})} g(\widetilde{X}_{n-2}) \rd W(s)
\end{align*}
hold. Thus, the implementation of a single sample of the fast EM method requires a computational cost of $\cO\big( N (\log N)^2 \big)$, which reduces that of the EM method. 
\end{remark}

Based on Theorem \ref{thm.EM}, similar to the proof of Theorem 4 of \cite{DaiXiao2020}, one can obtain the following error estimate for the fast EM method \eqref{eq.fastEMscheme}.

\begin{theorem} \label{thm.fastEM}
Under the assumptions of Theorem \ref{thm.EM}, there exists $C > 0$ independent of $N$ and $\epsilon$ such that for all $n \in \{1,2,\ldots,N\}$,
\begin{align*}
\| x(t_n) - \widetilde{X}_n \|_{L^p(\Omega,\hR^d)} \leq C t_n^{1-\alpha+\rho - (\frac{1}{2}-\beta)/r} N^{-(\frac{1}{2}-\beta)} + C\epsilon.
\end{align*}
\end{theorem}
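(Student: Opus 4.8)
The plan is to compare the fast EM iterate $\widetilde X_n$ of \eqref{eq.fastEMscheme} with the EM iterate $X_n$ of \eqref{eq.EMscheme}, to show $\|X_n-\widetilde X_n\|_{L^p(\Omega,\hR^d)}\le C\epsilon$ with $C$ independent of $N$ and $\epsilon$, and then to conclude via the triangle inequality $\|x(t_n)-\widetilde X_n\|_{L^p(\Omega,\hR^d)}\le\|x(t_n)-X_n\|_{L^p(\Omega,\hR^d)}+\|X_n-\widetilde X_n\|_{L^p(\Omega,\hR^d)}$ together with Theorem \ref{thm.EM} (which supplies the term $C t_n^{1-\alpha+\rho-(\frac12-\beta)/r}N^{-(\frac12-\beta)}$). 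As a preliminary I would record the uniform moment bound $\sup_{1\le n\le N}\|X_n\|_{L^p(\Omega,\hR^d)}\le C$, which is immediate from Theorem \ref{thm.EM} and $x\in\cC([0,T],L^p(\Omega,\hR^d))$; by \eqref{eq.assfLip} this also gives $\sup_{1\le n\le N}\big(\|f(X_n)\|_{L^p(\Omega,\hR^d)}+\|g(X_n)\|_{L^p(\Omega,\hR^d)}\big)\le C$.

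Set $e_n:=X_n-\widetilde X_n$, so $e_0=e_1=0$. Subtracting \eqref{eq.fastEMscheme} (in the form preceding it, before the summation order is exchanged) from \eqref{eq.EMscheme} and splitting the sum over $i$ at $i=n-1$, one writes, for $\kappa\in\{\alpha,\beta\}$ and $h\in\{f,g\}$,
\[
(t_n-\cdot)^{-\kappa}h(X_i)-\sum_{k}\omega_{k,\kappa}e^{-\tau_{k,\kappa}(t_n-\cdot)}h(\widetilde X_i)
=\Big[(t_n-\cdot)^{-\kappa}-\sum_{k}\omega_{k,\kappa}e^{-\tau_{k,\kappa}(t_n-\cdot)}\Big]h(X_i)+\sum_{k}\omega_{k,\kappa}e^{-\tau_{k,\kappa}(t_n-\cdot)}\big(h(X_i)-h(\widetilde X_i)\big),
\]
which decomposes $e_n$ into a \emph{kernel-approximation} part and an \emph{error-propagation} part. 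The crucial point for the first part is that on the graded mesh every kernel argument fed into Lemma \ref{lem.SOE} stays above the truncation $\delta=h_1$: by \eqref{eq.singleSize}, $h_n\ge rTN^{-r}(n-1)^{r-1}\ge TN^{-r}=h_1$ for $n\ge2$, hence $t_n-s\ge t_n-t_{n-1}=h_n\ge h_1$ whenever $s\le t_{n-1}$, and likewise $t_n-t_i\ge t_n-t_{n-2}=h_{n-1}+h_n\ge h_1$ for $i\le n-2$. Thus, with the choice $\delta=h_1$ made in the remark following \eqref{eq.fastEMscheme}, Lemma \ref{lem.SOE} bounds the bracket above by $\epsilon$ and also gives $\big|\sum_{k}\omega_{k,\kappa}e^{-\tau_{k,\kappa}(t_n-\cdot)}\big|\le(t_n-\cdot)^{-\kappa}+\epsilon\le C(t_n-\cdot)^{-\kappa}$. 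Using the moment bound above, the drift kernel-approximation term is bounded by $\epsilon\sum_{i=0}^{n-2}h_{i+1}\|f(X_i)\|_{L^p(\Omega,\hR^d)}\le C\epsilon\,t_{n-1}\le C\epsilon$, and the diffusion kernel-approximation term, via the BDG inequality, by $C\big(\epsilon^2\sum_{i=0}^{n-2}h_{i+1}\|g(X_i)\|_{L^p(\Omega,\hR^d)}^2\big)^{1/2}\le C\epsilon$; note that neither bound carries a $t_n$ factor.

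For the error-propagation part — the $h(X_i)-h(\widetilde X_i)$ contributions together with the two exact $i=n-1$ summands $\int_{t_{n-1}}^{t_n}(t_n-s)^{-\alpha}\big(f(X_{n-1})-f(\widetilde X_{n-1})\big)\rd s$ and $\int_{t_{n-1}}^{t_n}(t_n-t_{n-1})^{-\beta}\big(g(X_{n-1})-g(\widetilde X_{n-1})\big)\rd W(s)$ — I would argue exactly as for $\cJ_2$ and $\cJ_5$ in the proof of Theorem \ref{thm.EM} (compare \eqref{eq:J2J5}): the Cauchy--Schwarz inequality for the drift integral, using $\int_0^{t_n}(t_n-s)^{-\alpha}\rd s\le C$, the BDG inequality for the stochastic integral, and \eqref{eq.assfLip}, which altogether yield
\[
\|e_n\|_{L^p(\Omega,\hR^d)}^2\le C\epsilon^2+C\sum_{j=1}^{n-1}\int_{t_j}^{t_{j+1}}(t_n-s)^{-\max\{\alpha,2\beta\}}\|e_j\|_{L^p(\Omega,\hR^d)}^2\rd s .
\]
This is precisely the hypothesis of Lemma \ref{lem.Gronwall} with $z_n=\|e_n\|_{L^p(\Omega,\hR^d)}^2$, $\mu=1$, $\gamma=1-\max\{\alpha,2\beta\}\in(0,1)$ and $C_1=C\epsilon^2$; that lemma gives $\|e_n\|_{L^p(\Omega,\hR^d)}^2\le C\epsilon^2$, i.e.\ $\|X_n-\widetilde X_n\|_{L^p(\Omega,\hR^d)}\le C\epsilon$ with $C$ independent of $N$ and $\epsilon$, and combining with Theorem \ref{thm.EM} finishes the proof. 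The only genuinely delicate points are the verification that every kernel argument entering the sum-of-exponentials approximation exceeds $\delta=h_1$ — which is exactly where the graded-mesh estimate \eqref{eq.singleSize} and the choice of truncation enter — and the bookkeeping needed to cast the diffusion estimates into the form required by Lemma \ref{lem.Gronwall}; everything else parallels the proof of Theorem \ref{thm.EM}.
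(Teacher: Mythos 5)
Your proposal is correct and follows exactly the route the paper intends: the paper omits the details and simply refers to the proof of Theorem 4 of \cite{DaiXiao2020}, which is precisely this comparison of $\widetilde X_n$ with $X_n$ via the sum-of-exponentials error $\epsilon$ on $[\delta,T]$ with $\delta=h_1$, a Gr\"onwall argument (here Lemma \ref{lem.Gronwall}), and the triangle inequality with Theorem \ref{thm.EM}. Your verification that every kernel argument stays above $h_1$ on the graded mesh is the one genuinely new point relative to the uniform-mesh reference, and you handle it correctly.
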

When taking $\epsilon = N^{-(\frac{1}{2}-\beta)}$, the results in Corollary \ref{cor.EMres} for the EM method \eqref{eq.EMscheme} can be extended to the fast EM method \eqref{eq.fastEMscheme}.

\subsection{Milstein method}

For higher computational accuracy, we now study the Milstein method
\begin{align} \label{eq.Milstein}
&Y_n = Y_0 + \sum_{i=0}^{n-1} \int_{t_{i}}^{t_{i+1}} (t_n-s)^{-\alpha} f(Y_i) \rd s + \sum_{i=0}^{n-1} \int_{t_{i}}^{t_{i+1}} (t_n-s)^{-\beta} g(Y_i) \rd W(s) \\
&\ + \sum_{i=0}^{n-1} \int_{t_{i}}^{t_{i+1}} (t_n-s)^{-\beta} g'(Y_i) \sum_{j=0}^{i-1} \int_{t_j}^{t_{j+1}} \big( (s-u)^{-\beta} - (t_i-u)^{-\beta} \big) g(Y_j) \rd W(u) \rd W(s) \notag\\
&\ + \sum_{i=0}^{n-1} \int_{t_{i}}^{t_{i+1}} (t_n-s)^{-\beta} g'(Y_i) \int_{t_i}^s (s-u)^{-\beta} g(Y_i) \rd W(u) \rd W(s), \notag
\end{align}
where $n= 1, 2, \ldots, N$ and $Y_0 = x_0$. Notice that the Milstein method \eqref{eq.Milstein} is easier to implement than those in the literature, since it does not include the terms
\begin{align*}
\sum_{i=0}^{n-1} \int_{t_{i}}^{t_{i+1}} (t_n-s)^{-\alpha} f'(Y_i) \bigg( \sum_{j=0}^{i-1} \int_{t_j}^{t_{j+1}} \big( (s-u)^{-\beta} - (t_i-u)^{-\beta} \big) g(Y_j) \rd W(u) \bigg) \rd s
\end{align*}
and
\begin{align*}
\sum_{i=0}^{n-1} \int_{t_{i}}^{t_{i+1}} (t_n-s)^{-\alpha} f'(Y_i) \bigg( \int_{t_i}^s (s-u)^{-\beta} g(Y_i) \rd W(u) \bigg) \rd s.
\end{align*}
In the error analysis, it also enables us to weaken the condition $f \in \cC_b^2(\hR^d, \hR^d)$ used in the literature. The following theorem presents the pointwise-in-time error estimate for the Milstein method \eqref{eq.Milstein}.

\begin{theorem} \label{thm.Milstein}
Let $\alpha \in (0,1)$, $\beta \in (0,\frac{1}{2})$, and $r \geq 1$. Assume that $f \in \cC_b^1(\hR^d, \hR^d)$ and $g \in \cC_b^2(\hR^d, \hR^{d \times m})$. Then for any $p \geq 2$, there exists $C > 0$ such that 
\begin{align*}
\| x(t_n) - Y_n \|_{L^p(\Omega,\hR^d)}
&\leq C t_n^{ \frac{1}{2}-\alpha+\beta+\sigma - \sigma/r } N^{ -\min\{\frac{3}{2}-\alpha-\beta,\, 1-2\beta\} } \quad \forall\, n = 1,2,\ldots,N, 
\end{align*}
where $\sigma = \min\{\frac{3}{2}-\alpha-\beta, 1\}$.
\end{theorem}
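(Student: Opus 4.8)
The plan is to mimic the continuous-time-extension argument used in the proof of Theorem~\ref{thm.EM}, but to treat the diffusion part through a first-order Taylor expansion of $g$, so that the correction terms in \eqref{eq.Milstein} absorb the leading, order-$\tfrac12-\beta$ part of the stochastic-convolution increment $\Upsilon_{g\circ x}(s)-\Upsilon_{g\circ x}(\hat s)$ (equivalently, of $x(s)-x(\hat s)$), and the irreducible error comes only from the drift consistency and from the second-order Taylor remainder.

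First I would introduce the continuous-time Milstein scheme $Y(t)$, obtained from \eqref{eq.Milstein} by replacing $t_n$ with $t$ throughout while keeping the unfrozen kernel $(t-s)^{-\beta}$ in the outer stochastic integrals, so that $Y(t_n)=Y_n$; from the assumed $\cC_b^1$ regularity of $f$ and $\cC_b^2$ regularity of $g$ one gets linear growth of $f,g$ and boundedness of $g',g''$, which together with Lemma~\ref{lem.Gronwall} yields the a priori bound $\sup_{1\le n\le N}\|Y_n\|_{L^p(\Omega,\hR^d)}\le C$. Subtracting $Y(t_n)$ from \eqref{eq.model} and, for $s\in[t_i,t_{i+1}]$, writing $g(x(s))-g(Y_i)$ as the scheme's correction $g'(Y_i)M(s,t_i)$ (with $M(s,t_i)$ the discrete object appearing in \eqref{eq.Milstein}) plus the mismatch $g'(Y_i)\big(\widetilde M(s,t_i)-M(s,t_i)\big)$, where $\widetilde M(s,t_i)=\Upsilon_{g\circ x}(s)-\Upsilon_{g\circ x}(t_i)$, plus the drift increment $g'(Y_i)\big(\Lambda_{f\circ x}(s)-\Lambda_{f\circ x}(t_i)\big)$, plus a remainder $R(s,t_i)$ collecting the second-order Taylor term $O(|x(s)-x(t_i)|^2)$, the factor-mismatch $(g'(x(t_i))-g'(Y_i))(x(s)-x(t_i))$ and the stability term $g(x(t_i))-g(Y_i)$, I obtain $x(t_n)-Y_n=\sum_l\cI_l$, consisting of a drift consistency term $\int_0^{t_n}(t_n-s)^{-\alpha}\big(f(x(s))-f(x(\hat s))\big)\,\rd s$, drift and diffusion stability terms, and a ``Milstein residual'' built from the $\widetilde M-M$, the $\Lambda$-increment and the $R$ contributions.

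For the easy terms: the drift consistency term is estimated exactly as $\cJ_1$ in the proof of Theorem~\ref{thm.EM} --- split $\int_0^{t_1}+\int_{t_1}^{t_n}$, use $\|x(s)-x_0\|_{L^p}\le Cs^{\rho}$ on $[0,t_1]$ and Theorem~\ref{thm.newRegu} on $[t_1,t_n]$, then invoke \eqref{eq.rho} and \eqref{eq.kappa} with $\kappa=\sigma$, which produces $Ct_n^{\frac12-\alpha+\beta+\sigma-\sigma/r}N^{-\sigma}$ and hence the claimed $t_n$-power together with $N^{-\sigma}\le N^{-\min\{\frac32-\alpha-\beta,\,1-2\beta\}}$; the drift and diffusion stability terms, via the H\"older and BDG inequalities and the a priori bound on $Y_n$, are controlled by $C\int_0^{t_n}(t_n-s)^{-\max\{\alpha,2\beta\}}\|x(\hat s)-Y(\hat s)\|_{L^p}^2\,\rd s$. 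Inside the Milstein residual, the $\Lambda$-increment term is bounded by $Ct_i^{\beta-\frac12}(s-t_i)^{\sigma}$ thanks to \eqref{eq.driftSingular} and then summed as in \eqref{eq.kappa}; for $R$, Theorem~\ref{thm.newRegu} gives $\|x(s)-x(t_i)\|_{L^p}\le Ct_i^{\beta-\frac12}(s-t_i)^{\sigma}+C(s-t_i)^{\frac12-\beta}$, so its Taylor part contributes, after BDG, a term dominated by $\big(\sum_i\int_{t_i}^{t_{i+1}}(t_n-s)^{-2\beta}(s-t_i)^{2-4\beta}\,\rd s\big)^{1/2}$, which near the diagonal produces exactly the order $N^{-(1-2\beta)}$, while the remaining pieces of $R$ fold into the Gr\"onwall integral by Cauchy--Schwarz and the a priori bound.

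The main obstacle is the leftover piece of the Milstein residual, $\sum_{i=0}^{n-1}\int_{t_i}^{t_{i+1}}(t_n-s)^{-\beta}g'(Y_i)\big(\widetilde M(s,t_i)-M(s,t_i)\big)\,\rd W(s)$, a double stochastic integral with two weakly singular kernels. Here $\widetilde M-M=\int_0^{t_i}\big((s-u)^{-\beta}-(t_i-u)^{-\beta}\big)\big(g(x(u))-g(Y(\hat u))\big)\rd W(u)+\int_{t_i}^{s}(s-u)^{-\beta}\big(g(x(u))-g(Y(\hat u))\big)\rd W(u)$, and $g(x(u))-g(Y(\hat u))$ splits into a consistency part $g(x(u))-g(x(\hat u))$ (bounded via Theorems~\ref{thm.wellpos} and \ref{thm.newRegu}, with the usual special handling of $[0,t_1]$, where only $\rho$-H\"older continuity is available) and a stability part $g(x(\hat u))-g(Y(\hat u))$ (Gr\"onwall fodder). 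Estimating this requires applying the BDG inequality successively, using $\int_{t_i}^s(s-u)^{-2\beta}\rd u\le C(s-t_i)^{1-2\beta}$ and a bound of \eqref{eq.diffSqureSingle}-type for $\int_0^{t_i}\big((s-u)^{-\beta}-(t_i-u)^{-\beta}\big)^2\rd u$, and then carefully organising the resulting double sum on the graded mesh --- the inner sum over $j$ through \eqref{eq.BetaC} and an argument like \eqref{eq.kappa}, the outer sum over $i$ by the $\cJ_1$-type splitting into $\sum_{i<\lceil n/2\rceil}$ and $\sum_{i\ge\lceil n/2\rceil}$, together with \eqref{eq.singleSize} and \eqref{eq.multiSize} --- so that the correct $t_n$-power $\tfrac12-\alpha+\beta+\sigma-\sigma/r$ and $N$-power $\min\{\tfrac32-\alpha-\beta,1-2\beta\}$ both emerge. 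Collecting all the estimates gives $\|x(t_n)-Y(t_n)\|_{L^p}^2\le C\big(t_n^{\frac12-\alpha+\beta+\sigma-\sigma/r}N^{-\min\{\frac32-\alpha-\beta,1-2\beta\}}\big)^2+C\int_0^{t_n}(t_n-s)^{-\max\{\alpha,2\beta\}}\|x(\hat s)-Y(\hat s)\|_{L^p}^2\,\rd s$, whence Lemma~\ref{lem.Gronwall} yields the claim.
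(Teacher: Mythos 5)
Your overall architecture (continuous-time extension, Taylor expansion of the exact solution, consistency/stability/remainder splitting, BDG plus graded-mesh summation, Gr\"onwall) matches the paper's, but two of your estimates do not close, and each hides a key idea of the actual proof.

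First, the drift consistency term $\int_0^{t_n}(t_n-s)^{-\alpha}\big(f(x(s))-f(x(\hat s))\big)\,\rd s$ cannot be ``estimated exactly as $\cJ_1$'' to yield $N^{-\sigma}$. Theorem \ref{thm.newRegu} bounds $\|x(s)-x(\hat s)\|_{L^p}$ by $C\hat s^{\beta-\frac12}(s-\hat s)^{\sigma}+C(s-\hat s)^{\frac12-\beta}$; the estimate \eqref{eq.kappa} with $\kappa=\sigma$ only upgrades the first, weighted piece, while the second piece --- the Brownian-increment part, of size $h^{\frac12-\beta}\approx N^{-(\frac12-\beta)}$ on every subinterval --- contributes $C t_n^{1-\alpha}N^{-(\frac12-\beta)}$ to any pathwise (take-norms-inside-$\rd s$) bound, and no choice of grading exponent improves it. Since $\frac12-\beta<\min\{\frac32-\alpha-\beta,1-2\beta\}$ always, your plan caps the whole theorem at the EM rate. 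The paper's proof avoids this by writing $f(x(s))-f(x(\hat s))=\int_0^1 f'(\xi_s^\theta)\big(x(s)-x(\hat s)\big)\rd\theta$ and splitting $x(s)-x(\hat s)$ into the $\Lambda$- and $\Upsilon$-increments: the former is handled pathwise via \eqref{eq.driftSingular}, but the latter ($\cA_{1,2}$) is treated with the \emph{stochastic Fubini theorem}, turning the iterated integral into a single stochastic integral whose second moment, computed by BDG and the auxiliary estimate \eqref{lem.inA2}, exploits martingale orthogonality to reach $N^{-2\min\{\frac32-\alpha-\beta,1-2\beta\}}$. This is precisely why the hypothesis is $f\in\cC_b^1$ rather than Lipschitz; your proposal never uses $f'$.

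Second, your factor-mismatch term $\big(g'(x(\hat s))-g'(Y(\hat s))\big)\big(x(s)-x(\hat s)\big)$ does not ``fold into the Gr\"onwall integral by Cauchy--Schwarz and the a priori bound.'' Cauchy--Schwarz produces $\|x(\hat s)-Y(\hat s)\|_{L^{2p}}\,\|x(s)-x(\hat s)\|_{L^{2p}}$: keeping the first factor as Gr\"onwall fodder leaves an $L^{2p}$ quantity inside an $L^{p}$ Gr\"onwall (and iterating the argument at level $2p$ regenerates $L^{4p}$, so it never closes), while replacing it by the moment bound $\|x(\hat s)-Y(\hat s)\|_{L^{2p}}\le C$ yields only $N^{-(1-2\beta)}$ for the \emph{squared} error, i.e.\ again the EM rate. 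The paper resolves this with a bootstrap: Lemma \ref{lem.exMilst} first proves the EM-rate error bound $\|x(t_n)-Y(t_n)\|_{L^{q}}\le C t_n^{1-\alpha+\rho-(\frac12-\beta)/r}N^{-(\frac12-\beta)}$ for the Milstein iterates (by comparing with the EM iterates and invoking Theorem \ref{thm.EM}), and then multiplies this $N^{-(\frac12-\beta)}$ by the $N^{-(\frac12-\beta)}$ size of the accompanying increment to obtain $N^{-(1-2\beta)}$ for the error itself in the terms $\cB_{3,1}$ and $\cB_{4,1}$. Without some such a priori error estimate your argument cannot reach the claimed order.
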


The following uniform-in-time error estimates are consequences of Theorem \ref{thm.Milstein}.

\begin{corollary} \label{cor.Milsteinres}
Let the assumptions of Theorem \ref{thm.Milstein} hold and
\begin{align*}
r^{\,}_{\textup{Milstein}} := \max\bigg\{ \frac{\sigma}{\frac{1}{2}-\alpha+\beta+\sigma},\, 1 \bigg\}.
\end{align*}
Then for any $p \geq 2$, there exists $C > 0$ satisfying:\
\begin{itemize}
\item[(1)] when $r = 1$,
\begin{align*} 
\sup_{1 \leq n \leq N} \| x(t_n) - Y_n \|_{L^p(\Omega,\hR^d)} \leq C N^{ -\min\{2(1-\alpha),\, 1-2\beta \} };
\end{align*}

\item[(2)] when $r = r^{\,}_{\textup{Milstein}}$,
\begin{align*} 
\sup_{1 \leq n \leq N} \| x(t_n) - Y_n \|_{L^p(\Omega,\hR^d)} \leq C N^{ -\min\{\frac{3}{2}-\alpha-\beta,\, 1-2\beta\} }.
\end{align*}
\end{itemize}
\end{corollary}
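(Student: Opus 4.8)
The plan is to derive both parts directly from the pointwise-in-time estimate of Theorem \ref{thm.Milstein} by controlling the only $n$-dependent quantity, namely the prefactor $t_n^{E(r)}$ with the abbreviation $E(r) := \tfrac{1}{2}-\alpha+\beta+\sigma-\sigma/r$, and then taking the supremum over $1 \le n \le N$. The only mesh facts I need are $t_n \le T$ and $t_1 = T N^{-r}$. Since $\sigma>0$, the map $r \mapsto E(r)$ is increasing, so I split according to the sign of $E(r)$: when $E(r)\ge 0$, the prefactor satisfies $\sup_{1\le n\le N} t_n^{E(r)} \le \max\{1,T^{E(r)}\}=:C$, and the uniform rate is exactly the $N$-power in Theorem \ref{thm.Milstein}; when $E(r)<0$, the prefactor is largest at $n=1$, so $\sup_n t_n^{E(r)} = t_1^{E(r)} = T^{E(r)} N^{-rE(r)}$, and the extra factor $N^{-rE(r)}$ must be multiplied into the $N$-power of the theorem.

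For part (2), I would note that $r_{\textup{Milstein}}$ is chosen precisely so that $E(r_{\textup{Milstein}})\ge 0$. The zero of $E$ is $r_0 = \sigma/(\tfrac{1}{2}-\alpha+\beta+\sigma)$, which is well-defined and positive because a direct check gives $\tfrac{1}{2}-\alpha+\beta+\sigma>0$ in both branches of $\sigma=\min\{\tfrac{3}{2}-\alpha-\beta,\,1\}$. Since $E$ is increasing and $r_{\textup{Milstein}}=\max\{r_0,1\}\ge r_0$, we obtain $E(r_{\textup{Milstein}})\ge 0$; thus $\sup_n t_n^{E(r_{\textup{Milstein}})}\le C$, and the estimate of Theorem \ref{thm.Milstein} collapses to $C\,N^{-\min\{\frac{3}{2}-\alpha-\beta,\,1-2\beta\}}$, which is the claim.

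For part (1) I set $r=1$, so that $E(1)=\tfrac{1}{2}-\alpha+\beta$, and do a case analysis on its sign, i.e.\ on whether $\alpha\le \tfrac{1}{2}+\beta$ or $\alpha>\tfrac{1}{2}+\beta$. If $\alpha\le\tfrac{1}{2}+\beta$, then $E(1)\ge 0$ bounds the prefactor, while simultaneously $\tfrac{3}{2}-\alpha-\beta\ge 1-2\beta$, so the theorem's $N$-power is $1-2\beta$; since $\alpha\le\tfrac{1}{2}+\beta$ also gives $2(1-\alpha)\ge 1-2\beta$, one has $\min\{2(1-\alpha),\,1-2\beta\}=1-2\beta$, matching the claim. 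If instead $\alpha>\tfrac{1}{2}+\beta$, then $E(1)<0$, so the prefactor contributes $t_1^{E(1)}=C\,N^{\alpha-\beta-\frac{1}{2}}$, while the theorem's $N$-power becomes $\tfrac{3}{2}-\alpha-\beta$; adding exponents gives $(\alpha-\beta-\tfrac{1}{2})-(\tfrac{3}{2}-\alpha-\beta)=2\alpha-2=-2(1-\alpha)$, and here $\alpha>\tfrac{1}{2}+\beta$ forces $2(1-\alpha)<1-2\beta$, so $\min\{2(1-\alpha),\,1-2\beta\}=2(1-\alpha)$, again matching.

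There is no deep obstacle here; the work is entirely bookkeeping. The one point requiring care is the consistency of the three switching thresholds across the sign cases: the sign of $E(1)$, the active branch of $\min\{\tfrac{3}{2}-\alpha-\beta,\,1-2\beta\}$, and the active branch of $\min\{2(1-\alpha),\,1-2\beta\}$ all change at exactly $\alpha=\tfrac{1}{2}+\beta$, so that in each regime the product of the prefactor power and the theorem's $N$-power reproduces the stated uniform rate.
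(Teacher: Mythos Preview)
Your proposal is correct; the paper itself gives no proof, merely stating the corollary as an immediate consequence of Theorem~\ref{thm.Milstein}, and your argument is precisely the direct computation that fills in those details. The sign analysis of $E(r)$ and the observation that all three thresholds coincide at $\alpha=\tfrac{1}{2}+\beta$ are exactly what is needed.
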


For the Milstein method of \eqref{eq.model}, the convergence order in Corollary \ref{cor.Milsteinres}(2) equals to the convergence order $\frac{1}{2}-\beta$ of the EM method plus the H\"older continuous exponent $\min\{1-\alpha, \frac{1}{2}-\beta\}$ of the exact solution on $[0, T]$, which is consistent with the corresponding results on the numerical study of stochastic differential equations (see e.g., \cite{KloedenPlaten1992, MilsteinTretyakov2004}).

For any $s \in [t_i,t_{i+1})$ with $i \in \{0,1,\ldots, N-1\}$, Taylor's formula tells us that
\begin{align*}
f(x(s)) = f(x(t_i)) + \Delta_f^1(s), \quad
g(x(s)) = g(x(t_i)) + g'(x(t_i)) (x(s) - x(t_i)) + \Delta_g^2(s).
\end{align*}
Here, $\Delta_f^1(s) = \int_0^1 f'(\xi_s^{\theta}) \big(x(s) - x(t_i)\big) \mathrm d \theta$ is the first-order remainder term with $\xi_s^{\theta} = x(t_i) + \theta \big(x(s) - x(t_i)\big)$, and $\Delta_g^2(s)$ denotes the second-order remainder term and satisfies
\begin{align}\label{eq.fg}
\|\Delta_g^2(s)\|_{L^p(\Omega,\mathbb{R}^d)} &\leq C\|x(s) - x(t_i)\|_{L^{2p}(\Omega,\mathbb{R}^d)}^2 
\end{align}
provided that $g \in \cC_b^2(\hR^d, \hR^{d \times m})$. Recalling \eqref{eq.equivModel}, one can obtain the expansion
\begin{align} \label{eq.MilsteinExpan}
&x(t_n) = x_0 + \sum_{i=0}^{n-1} \int_{t_{i}}^{t_{i+1}} (t_n-s)^{-\alpha} f(x(t_i)) \rd s + \sum_{i=0}^{n-1} \int_{t_{i}}^{t_{i+1}} (t_n-s)^{-\beta} g(x(t_i)) \rd W(s) \\
&\ + \sum_{i=0}^{n-1} \int_{t_{i}}^{t_{i+1}} (t_n-s)^{-\beta} g'(x(t_i)) \int_0^{t_i} \big( (s-u)^{-\beta} - (t_i-u)^{-\beta} \big) g(x(u)) \rd W(u) \rd W(s) \notag\\
&\ + \sum_{i=0}^{n-1} \int_{t_{i}}^{t_{i+1}} (t_n-s)^{-\beta} g'(x(t_i)) \int_{t_i}^s (s-u)^{-\beta} g(x(u)) \rd W(u) \rd W(s) +\sum_{j=1}^3 \cA_j(t_n), \notag
\end{align}
where 
\begin{align*}
\cA_1(t_n) &:= \sum_{i=0}^{n-1} \int_{t_{i}}^{t_{i+1}} (t_n-s)^{-\alpha} \Delta_f^1(s) \rd s, \\
\cA_2(t_n) &:= \sum_{i=0}^{n-1} \int_{t_{i}}^{t_{i+1}} (t_n-s)^{-\beta} g'(x(t_i)) \left( \Lambda_{f \circ x}(s) - \Lambda_{f \circ x}(t_i) \right) \rd W(s), \\
\cA_3(t_n) &:= \sum_{i=0}^{n-1} \int_{t_{i}}^{t_{i+1}} (t_n-s)^{-\beta} \Delta_g^2(s) \rd W(s).
\end{align*}
Then, by discarding the remainder terms $\{\cA_j\}_{j=1}^3$ and approximating $g(x(\cdot))$ in \eqref{eq.MilsteinExpan} based on the left rectangle rule, we can construct the Milstein method \eqref{eq.Milstein}, which has the following continuous-time version 
\begin{align} \label{eq.cont-Milstein}
Y(t) &= Y_0 + \int_{0}^{t} (t-s)^{-\alpha} f(Y(\hat{s})) \rd s + \int_{0}^{t} (t-s)^{-\beta} g(Y(\hat{s})) \rd W(s) \\
&\quad + \int_{0}^{t} (t-s)^{-\beta} g'(Y(\hat{s})) \int_{0}^{\hat{s}} \big( (s-u)^{-\beta} - (\hat{s}-u)^{-\beta} \big) g(Y(\hat{u})) \rd W(u) \rd W(s) \notag\\
&\quad + \int_{0}^{t} (t-s)^{-\beta} g'(Y(\hat{s})) \int_{\hat{s}}^s (s-u)^{-\beta} g(Y(\hat{u})) \rd W(u) \rd W(s), \quad t \in[0,T]. \notag
\end{align}
Note that $Y(t_n) = Y_n$ for all $t_n \in \cM_r$.

We prepare several lemmas for proving Theorem \ref{thm.Milstein}. 

\begin{lemma} \label{lem.Ak}
Under the assumptions of Theorem \ref{thm.Milstein},
\begin{align}\label{eq:Aj}
\big\| \cA_j(t_n) \big\|_{L^p(\Omega,\hR^d)}^2
\leq C t_n^{2(\frac{1}{2}-\alpha+\beta+\sigma - \sigma/r)} N^{ -2\min\{\frac{3}{2}-\alpha-\beta,\, 1-2\beta\} }, \quad j=1,2,3,
\end{align}
where $\sigma = \min\{\frac{3}{2}-\alpha-\beta, 1\}$ and $\cA_j$ ($j=1,2,3$) are the same as those in \eqref{eq.MilsteinExpan}.
\end{lemma}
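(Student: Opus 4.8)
The plan is to bound each of the three remainder terms $\mathcal{A}_1, \mathcal{A}_2, \mathcal{A}_3$ separately, in each case reducing to a deterministic kernel integral of the type already encountered in the proof of Theorem \ref{thm.EM}, and then invoking the regularity estimates of Theorem \ref{thm.newRegu} (together with \eqref{eq.driftSingular}) to control the increments of $x$ and of $\Lambda_{f\circ x}$ that appear. The unifying observation is that all three quantities are of the schematic form $\sum_{i=0}^{n-1}\int_{t_i}^{t_{i+1}}(t_n-s)^{-\alpha\text{ or }-\beta}\,R(s)\,\mathrm d s$ (with a stochastic integrator in the cases $j=2,3$), where $\|R(s)\|_{L^p}\lesssim \hat s^{\,\beta-\frac12}(s-\hat s)^{\sigma}$ or $\lesssim \hat s^{\,2\beta-1}(s-\hat s)^{2\sigma}$; the sum over $i$ can then be split into the initial interval $[0,t_1]$, a ``bulk'' part $k\le \lceil n/2\rceil-1$, a ``near-diagonal'' part $\lceil n/2\rceil\le k\le n-2$, and the last interval, exactly as in the treatment of $\mathcal J_1$ and the claim \eqref{eq.kappa}. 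So the core of the proof is to recycle those four-region estimates with the exponents $\rho,\sigma$ replaced by the larger exponents coming from the first-order expansion.

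Concretely: for $\mathcal A_1$, since $\Delta_f^1(s)=\int_0^1 f'(\xi_s^\theta)(x(s)-x(t_i))\,\mathrm d\theta$ and $f'$ is bounded, I would use $\|\Delta_f^1(s)\|_{L^p}\le C\|x(s)-x(\hat s)\|_{L^p}\le C(\hat s^{\,\beta-\frac12}(s-\hat s)^{\sigma}+(s-\hat s)^{\frac12-\beta})$ by Theorem \ref{thm.newRegu}, and then apply \eqref{eq.kappa} with $\kappa=\min\{\frac32-\alpha-\beta,1-2\beta\}$ (noting $\min\{2\sigma,\tfrac12-\beta+\sigma\}\ge\kappa$) plus the $[0,t_1]$ estimate analogous to \eqref{eq.rho}; this yields the bound \eqref{eq:Aj} for $j=1$. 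For $\mathcal A_3$, I would first apply the BDG inequality to pass from the stochastic integral to $\int_0^{t_n}(t_n-s)^{-2\beta}\|\Delta_g^2(s)\|_{L^p}^2\,\mathrm d s$, then use \eqref{eq.fg} together with the $L^{2p}$-version of Theorem \ref{thm.newRegu} (which holds verbatim since $p\ge2$ is arbitrary) to get $\|\Delta_g^2(s)\|_{L^p}\le C(\hat s^{\,2\beta-1}(s-\hat s)^{2\sigma}+(s-\hat s)^{1-2\beta})$; squaring and integrating against $(t_n-s)^{-2\beta}$ then reduces to a deterministic estimate of the same four-region type but with the exponent $-2\beta$ in place of $-\alpha$ and with squared regularity exponents, giving the squared rate $N^{-2\min\{\frac32-\alpha-\beta,1-2\beta\}}$.

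For $\mathcal A_2$ I would again start with BDG to reduce to $\int_0^{t_n}(t_n-s)^{-2\beta}\,\|g'(x(\hat s))\|^2\,\|\Lambda_{f\circ x}(s)-\Lambda_{f\circ x}(\hat s)\|_{L^p}^2\,\mathrm d s$, use boundedness of $g'$, and then plug in \eqref{eq.driftSingular}, which gives $\|\Lambda_{f\circ x}(s)-\Lambda_{f\circ x}(\hat s)\|_{L^p}\le C\hat s^{\,\beta-\frac12}(s-\hat s)^{\sigma}$ when $\alpha\ne\frac12-\beta$ (and with an $\varepsilon$-loss on the exponent in the critical case, which is harmless after the min over $\varepsilon$ is absorbed into the constant); this is structurally identical to the $\mathcal A_1$ situation but with $(t_n-s)^{-\beta}$ weight and squares, so the same region-splitting and \eqref{eq.kappa}-type bounds close it. The main obstacle I anticipate is bookkeeping rather than any genuinely new idea: one must check that on each of the four regions the product of the powers of $t_n/N$, $k$ (or $n$), $(n-k-1)$ telescopes to the advertised $t_n^{2(\frac12-\alpha+\beta+\sigma-\sigma/r)}N^{-2\min\{\frac32-\alpha-\beta,1-2\beta\}}$, in particular verifying the relevant sign conditions on exponents such as $(\frac12-\beta)-(\sigma+1)r<0$ and that $\min\{2\sigma,\,\tfrac12-\beta+\sigma\}\ge\min\{\tfrac32-\alpha-\beta,\,1-2\beta\}$, so that the weaker of the two regularity contributions from Theorem \ref{thm.newRegu} still delivers the claimed order. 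Once those inequalities are in hand, the three estimates assemble into \eqref{eq:Aj} with $C$ depending only on $T,\alpha,\beta,r,p$ and the bounds on $f',g',g''$.
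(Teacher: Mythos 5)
Your treatments of $\cA_2$ and $\cA_3$ are essentially the paper's (BDG inequality, boundedness of $g'$, then \eqref{eq.driftSingular} resp.\ \eqref{eq.fg} combined with the regularity estimates), and they close after the bookkeeping you describe. The gap is in $\cA_1$. You propose to bound $\|\cA_1(t_n)\|_{L^p}\le C\int_0^{t_n}(t_n-s)^{-\alpha}\|x(s)-x(\hat s)\|_{L^p}\,\rd s$ and insert Theorem \ref{thm.newRegu}. But that theorem contains the non-improvable term $C(s-\hat s)^{\frac12-\beta}$ coming from the stochastic convolution, and the best this route can yield for that contribution is
\[
\int_0^{t_n}(t_n-s)^{-\alpha}(s-\hat s)^{\frac12-\beta}\,\rd s \;\leq\; C\,N^{-(\frac12-\beta)}\,t_n^{1-\alpha},
\]
i.e.\ order $N^{-(\frac12-\beta)}$ only. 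Since $\min\{\tfrac32-\alpha-\beta,\,1-2\beta\}-(\tfrac12-\beta)=\min\{1-\alpha,\tfrac12-\beta\}>0$, this falls strictly short of \eqref{eq:Aj} for every admissible $\alpha,\beta$. No choice of $\kappa$ in \eqref{eq.kappa} can repair it, because on the bulk of $[0,t_n]$ this term carries no singular prefactor $\hat s^{\,\beta-\frac12}$ to trade against; and the exponents $2\sigma$ and $\tfrac12-\beta+\sigma$ you invoke do not arise, since $\cA_1$ is linear (not quadratic) in the increment $x(s)-x(t_i)$. Taking $L^p$-norms inside the $\rd s$-integral discards exactly the cancellation that makes $\cA_1$ small.

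The missing idea is to split $x(s)-x(t_i)=\big(\Lambda_{f\circ x}(s)-\Lambda_{f\circ x}(t_i)\big)+\big(\Upsilon_{g\circ x}(s)-\Upsilon_{g\circ x}(t_i)\big)$ inside $\Delta_f^1(s)$ via \eqref{eq.equivModel}. The drift increment does satisfy $\|\cdot\|_{L^p}\le C\hat s^{\,\beta-\frac12}(s-\hat s)^{\sigma}$ by \eqref{eq.driftSingular}, and your \eqref{eq.kappa}-type argument handles that part. For the diffusion increment one must keep the stochastic integral intact, apply the stochastic Fubini theorem to rewrite $\sum_i\int_{t_i}^{t_{i+1}}(t_n-s)^{-\alpha}f'(\xi_s^\theta)\big(\Upsilon_{g\circ x}(s)-\Upsilon_{g\circ x}(t_i)\big)\rd s$ as a single integral against $\rd W(u)$ whose integrand involves $\int(t_n-s)^{-\alpha}f'(\xi_s^\theta)\big((s-u)^{-\beta}-(\hat s-u)^{-\beta}\big)\rd s$ (plus a near-diagonal piece with kernel $(s-u)^{-\beta}$), and only then apply the BDG inequality. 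The It\^o isometry in $u$ is what upgrades the order from $\tfrac12-\beta$ to $\min\{\tfrac32-\alpha-\beta,\,1-2\beta\}$; the resulting deterministic kernel integrals are exactly \eqref{lem.inA2} and its companion. Without this Fubini/martingale step the estimate for $j=1$ cannot reach the claimed rate.
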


\begin{proof} 
For $\cA_1(t_n)$, we divided it into two terms
\begin{align*}
\cA_{1,1}(t_n):=\sum_{i=0}^{n-1} \int_{t_{i}}^{t_{i+1}} (t_n-s)^{-\alpha} \int_0^1 f'(\xi_s^{\theta}) \big( \Lambda_{f \circ x}(s) - \Lambda_{f \circ x}(t_i) \big) \mathrm d \theta \rd s, \\
\cA_{1,2}(t_n):=\sum_{i=0}^{n-1} \int_{t_{i}}^{t_{i+1}} (t_n-s)^{-\alpha} \int_0^1 f'(\xi_s^{\theta}) \big( \Upsilon_{g \circ x}(s) - \Upsilon_{g \circ x}(t_i) \big) \mathrm d \theta \rd s. 
\end{align*}
Since $\sigma-(2-\alpha) r < 0$ and $\sigma\le \frac32-\alpha-\beta$,
\begin{align}\label{eq.sigma3}
\int_{0}^{t_{1}} (t_n-s)^{-\alpha} (s-\hat{s})^{1-\alpha} \rd s
&\leq C t_n^{-\alpha} h_1^{2-\alpha} \leq C (n/N)^{-\alpha r} N^{-(2-\alpha) r} \notag \\
&= C (n/N)^{(2-2\alpha) r - \sigma} N^{-\sigma} n^{\sigma-(2-\alpha) r} \notag \\
& \leq C t_n^{2-2\alpha - \sigma/r} N^{-\sigma}\le C t_n^{\frac{1}{2}-\alpha+\beta+\sigma - \sigma/r} N^{-\sigma}.
\end{align}
Hence, by the boundedness of $f'$, \eqref{eq.drift} and \eqref{eq.sigma3},
\begin{align*}
\left\| \cA_{1,1}(t_n) \right\|_{L^p(\Omega,\hR^d)}
&\leq Ct_n^{2-2\alpha - \sigma/r} N^{-\sigma} \\
&\quad + C \int_{t_1}^{t_{n}} (t_n-s)^{-\alpha} \left\| \Lambda_{f \circ x}(s) - \Lambda_{f \circ x}(\hat{s}) \right\|_{L^p(\Omega,\hR^d)} \rd s.
\end{align*}
Then according to \eqref{eq.driftSingular} and \eqref{eq.kappa} with $\kappa = \sigma$, the second term on the right hand side can be bounded by $C t_n^{\frac{1}{2}-\alpha+\beta+\sigma - \sigma/r} N^{-\sigma} $ if $\alpha \neq \frac{1}{2}-\beta$, and by $CN^{-(1-2\beta)}$ if $\alpha = \frac{1}{2}-\beta$. These yield 
\begin{align*}
\left\| \cA_{1,1}(t_n) \right\|_{L^p(\Omega,\hR^d)}\leq C t_n^{\frac{1}{2}-\alpha+\beta+\sigma - \sigma/r} N^{ -\min\{\frac{3}{2}-\alpha-\beta,\, 1-2\beta\} }.
\end{align*}

By the stochastic Fubini theorem, $\cA_{1,2}(t_n)=\cA_{1,2}^{\star} (t_n) +\cA_{1,2}^{\star\star} (t_n) $ with 
\begin{align*}
\cA_{1,2}^{\star} (t_n) &:= \sum_{i=0}^{n-1} \int_{t_{i}}^{t_{i+1}} (t_n-s)^{-\alpha} f'(\xi_s^{\theta}) \int_0^{t_i} \big( (s-u)^{-\beta} - (t_i-u)^{-\beta} \big) g(x(u)) \rd W(u) \rd s \\
&\phantom{:}=\int_{0}^{t_{n-1}} \left( \int_{\check{u}}^{t_{n}} (t_n-s)^{-\alpha} f'(\xi_s^{\theta}) \big( (s-u)^{-\beta} - (\hat{s}-u)^{-\beta} \big) \rd s \right) g(x(u)) \rd W(u),\\
\cA_{1,2}^{\star\star} (t_n)&:= \sum_{i=0}^{n-1} \int_{t_{i}}^{t_{i+1}} (t_n-s)^{-\alpha} f'(\xi_s^{\theta}) \int_{t_i}^s (s-u)^{-\beta} g(x(u)) \rd W(u) \rd s \\
&\phantom{:}=\int_{0}^{t_{n}} \left( \int_u^{\check{u}} (t_n-s)^{-\alpha} f'(\xi_s^{\theta}) (s-u)^{-\beta} \rd s \right) g(x(u)) \rd W(u). 
\end{align*}
The BDG inequality and \eqref{lem.inA2} show 
\begin{align*}
\left\| \cA_{1,2}^{\star} (t_n) \right\|_{L^p(\Omega,\hR^d)}^2 
\leq C N^{-2\min\{\frac{3}{2}-\alpha-\beta,\, 1-2\beta\}}.
\end{align*}
Analogously, the BDG inequality also gives
\begin{align*}
&\quad\, \left\| \cA_{1,2}^{\star\star} (t_n) \right\|_{L^p(\Omega,\hR^d)}^2 \\
&\leq C \int_{0}^{t_{n-1}} \left| \int_u^{\check{u}} (t_n-s)^{-\alpha} (s-u)^{-\beta} \rd s \right|^2\!\! \rd u
+ C \int_{t_{n-1}}^{t_n} \left| \int_u^{t_n} (t_n-s)^{-\alpha} (s-u)^{-\beta} \rd s \right|^2\!\! \rd u \\
&\leq C \int_{0}^{t_{n-1}} (t_n-u)^{-2\alpha} \left| \int_u^{\check{u}} (s-u)^{-\beta} \rd s \right|^2 \!\! \rd u
+ C \int_{t_{n-1}}^{t_n} (t_n - u)^{2(1-\alpha-\beta)} \rd u \\
&\leq C N^{-2\min\{\frac{3}{2}-\alpha-\beta,\, 1-2\beta\}}.
\end{align*}
Combining the above estimates completes the proof of \eqref{eq:Aj} for $j=1$.

To estimate $\cA_2(t_n)$, note that 
\begin{align*}
\int_0^{t_1} (t_n-s)^{-2\beta} \| \Lambda_{f \circ x}(s) - \Lambda_{f \circ x}(\hat{s}) \|_{L^p(\Omega,\hR^d)}^2 \rd s 
&\le \int_{0}^{t_1} (t_n-s)^{-2\beta} s^{2(1-\alpha)} \rd s \\ 
&\le CN^{-2(\frac{3}{2}-\alpha-\beta)}.
\end{align*}
In addition, in view of \eqref{eq.driftSingular}, one has that for $s\in(t_1,T]$, 
\begin{align*}
\| \Lambda_{f \circ x}(s) - \Lambda_{f \circ x}(\hat{s}) \|_{L^p(\Omega,\hR^d)}\le C \hat{s}^{\beta-\frac{1}{2}}(s-\hat{s})^{\min\{\frac{3}{2}-\alpha-\beta, 1-2\beta\}}. 
\end{align*}
Hence the BDG inequality, the boundedness of $g'$, \eqref{eq.drift} and \eqref{eq.driftSingular} yield 
\begin{align*}
\left\| \cA_2(t_n) \right\|_{L^p(\Omega,\hR^d)}^2
&\leq C \int_{0}^{t_{n}} (t_n-s)^{-2\beta} \left\| \Lambda_{f \circ x}(s) - \Lambda_{f \circ x}(\hat{s}) \right\|_{L^p(\Omega,\hR^d)}^2 \rd s \\
&\leq CN^{-2(\frac{3}{2}-\alpha-\beta)}\!+\!C \int_{t_1}^{t_{n}} \! \!(t_n\!-\!s)^{-2\beta} \hat{s}^{2\beta-1}(s\!-\!\hat{s})^{2\min\{\frac{3}{2}-\alpha-\beta, 1-2\beta\}} \rd s \\
&\leq C N^{-2\min\{\frac{3}{2}-\alpha-\beta, 1-2\beta\}}. 
\end{align*}
Thus \eqref{eq:Aj} holds for $j=2$.

For $ \cA_3(t_n) $, using the BDG inequality and \eqref{eq.fg} shows
\begin{align*}
\left\| \cA_3(t_n) \right\|_{L^p(\Omega,\hR^d)}^2
\leq C \int_{0}^{t_{n}} (t_n-s)^{-2\beta} \left\| x(s) - x(\hat{s}) \right\|_{L^{2p}(\Omega,\hR^d)}^4 \rd s,
\end{align*}
which together with Theorems \ref{thm.wellpos} and \ref{thm.newRegu} implies 
\begin{align*}
\left\| \cA_3(t_n) \right\|_{L^p(\Omega,\hR^d)}^2
&\leq C N^{-4\min\{1-\alpha,\, \frac{1}{2}-\beta\}} \int_{0}^{t_{1}} (t_n-s)^{-2\beta} \rd s \\
&\quad + C N^{-2\min\{1-\alpha,\, \frac{1}{2}-\beta\}} \int_{t_1}^{t_{n}} (t_n-s)^{-2\beta} \hat{s}^{2\beta-1} (s-\hat{s})^{1-2\beta} \rd s \\
&\leq C N^{-2\min\{\frac{3}{2}-\alpha-\beta,\, 1-2\beta\}}.
\end{align*}
Thus \eqref{eq:Aj} holds for $j=3$. The proof is completed. 
\end{proof}

\begin{lemma} \label{lem.exMilst}
Under the assumptions of Theorem \ref{thm.Milstein}, 
\begin{align*}
\| x(t_n) - Y(t_n) \|_{L^p(\Omega,\hR^d)} \leq C t_n^{1-\alpha+\rho - (\frac{1}{2}-\beta)/r} N^{-(\frac{1}{2}-\beta)}\quad\forall\,n \in \{1,2,\ldots, N\}.
\end{align*}
\end{lemma}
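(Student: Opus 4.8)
The plan is to compare the exact expansion \eqref{eq.MilsteinExpan} of $x(t_n)$ with the continuous-time Milstein scheme \eqref{eq.cont-Milstein} evaluated at $t_n$, subtract, and estimate the resulting difference in $L^p(\Omega,\hR^d)$. Writing out $x(t_n)-Y(t_n)$, the difference naturally splits into three groups of terms: (i) the remainder terms $\sum_{j=1}^3 \cA_j(t_n)$, which by Lemma \ref{lem.Ak} are of the higher order $t_n^{\frac12-\alpha+\beta+\sigma-\sigma/r}N^{-\min\{\frac32-\alpha-\beta,1-2\beta\}}$ and hence bounded by the claimed $Ct_n^{1-\alpha+\rho-(\frac12-\beta)/r}N^{-(\frac12-\beta)}$ (since $\sigma\ge\rho+\frac12-\beta$ and $\min\{\frac32-\alpha-\beta,1-2\beta\}\ge\frac12-\beta$); (ii) the ``principal'' drift and diffusion differences
$\int_0^{t_n}(t_n-s)^{-\alpha}\big(f(x(\hat s))-f(Y(\hat s))\big)\rd s$ and
$\int_0^{t_n}(t_n-s)^{-\beta}\big(g(x(\hat s))-g(Y(\hat s))\big)\rd W(s)$, which after the Hölder and BDG inequalities and \eqref{eq.assfLip} contribute $C\int_0^{t_n}(t_n-s)^{-\max\{\alpha,2\beta\}}\|x(\hat s)-Y(\hat s)\|_{L^p}^2\rd s$ to the square of the error; and (iii) the two double-stochastic-integral Milstein correction terms, which must be shown to produce, again up to a Grönwall-type remainder, only contributions involving $\|x(u)-Y(u)\|_{L^p}$ at earlier mesh points together with a higher-order forcing term.

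For group (iii), I would handle each of the two correction integrals by first using $\|g(x(t_i))-g(Y(t_i))\|\le L\|x(t_i)-Y(t_i)\|$ together with the boundedness of $g'$ and $g$ (from $g\in\cC_b^2$), and by replacing $g(x(u))$ by $g(Y(\hat u))$ at the cost of another $\|x(u)-Y(u)\|$-type term; the inner integrals $\int_0^{\hat s}\big((s-u)^{-\beta}-(\hat s-u)^{-\beta}\big)g(\cdot)\rd W(u)$ and $\int_{\hat s}^s(s-u)^{-\beta}g(\cdot)\rd W(u)$ are estimated in $L^p$ by the BDG inequality exactly as in the computation \eqref{eq.diffSqureSingle} and in the $\cJ_4$-type bounds in the proof of Theorem \ref{thm.EM}, giving factors $N^{-(1-2\beta)/2}$ (on the uniform part) respectively a local $h$-power, so that after taking $L^p$-norms, applying the outer BDG inequality in $\rd W(s)$, and using \eqref{eq.BetaC}/\eqref{eq.s} to absorb $\hat s^{\beta-\frac12}$ against $s^{\beta-\frac12}$, these terms are dominated by $CN^{-(1-2\beta)}$ plus $C\int_0^{t_n}(t_n-s)^{-2\beta}\|x(\hat s)-Y(\hat s)\|_{L^p}^2\rd s$. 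Collecting everything, one arrives at
\begin{align*}
\|x(t_n)-Y(t_n)\|_{L^p(\Omega,\hR^d)}^2
\le C\Big(t_n^{1-\alpha+\rho-(\frac12-\beta)/r}N^{-(\frac12-\beta)}\Big)^2
+C\int_0^{t_n}(t_n-s)^{-\max\{\alpha,2\beta\}}\|x(\hat s)-Y(\hat s)\|_{L^p(\Omega,\hR^d)}^2\rd s.
\end{align*}
Rewriting the integral as a sum over $[t_j,t_{j+1}]$ and invoking the Grönwall-type inequality Lemma \ref{lem.Gronwall} with $\mu-1=2\big(1-\alpha+\rho-(\frac12-\beta)/r\big)$ and $\gamma=1-\max\{\alpha,2\beta\}$ then yields the claimed pointwise bound; taking square roots finishes the proof.

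The main obstacle I anticipate is bookkeeping in group (iii): one must verify that every exact double stochastic integral in \eqref{eq.MilsteinExpan} is matched, term by term, with the corresponding one in \eqref{eq.cont-Milstein} so that all differences are genuinely Lipschitz in $x-Y$ or are absorbed into the already-established higher-order remainders $\cA_j$ and the $N^{-(1-2\beta)}$ quadrature error — in particular taking care that the replacement of $g(x(u))$ by $g(Y(\hat u))$ inside the inner integrals does not lose a power of the stepsize, which is where the estimates \eqref{eq.diffSqureSingle} and \eqref{eq.kappa} (with the weight $\hat s^{\beta-\frac12}$) together with $r\ge1$ and \eqref{eq.s} are essential. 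Everything else is a routine combination of the Hölder, BDG, and Cauchy--Schwarz inequalities with Theorems \ref{thm.wellpos} and \ref{thm.newRegu}, exactly as in the proof of Theorem \ref{thm.EM}.
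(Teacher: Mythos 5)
Your route is genuinely different from the paper's, and as written it contains a gap. The paper proves this lemma indirectly: it compares the continuous-time Milstein iterates $Y$ of \eqref{eq.cont-Milstein} with the continuous-time EM iterates $X$ of \eqref{eq.cont-EMscheme}, showing $\|X(t_n)-Y(t_n)\|_{L^p(\Omega,\hR^d)}\le CN^{-(\frac{1}{2}-\beta)}$ --- the two schemes differ only by the two Milstein correction terms, whose $L^p$ norms are bounded directly by BDG-type arguments without any a priori information on $x-Y$ --- and then concludes by the triangle inequality together with Theorem \ref{thm.EM}. You instead rerun the full consistency-plus-stability analysis of $x$ versus $Y$, which is essentially the proof of Theorem \ref{thm.Milstein} aimed at a weaker rate.

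The gap is in your group (iii). The difference of the correction terms contains the product $\big(g'(x(\hat s))-g'(Y(\hat s))\big)\int_0^{\hat s}\big((s-u)^{-\beta}-(\hat s-u)^{-\beta}\big)g(x(u))\,\rd W(u)$ (and its analogue over $[\hat s,s]$). After the outer BDG inequality and H\"older's inequality this is controlled by $\|x(\hat s)-Y(\hat s)\|_{L^{2p}}^2$ times the $L^{2p}$ norm of the inner integral, not by $\|x(\hat s)-Y(\hat s)\|_{L^{p}}^2$; so your claimed bound ``$CN^{-(1-2\beta)}$ plus $C\int_0^{t_n}(t_n-s)^{-2\beta}\|x(\hat s)-Y(\hat s)\|_{L^p}^2\rd s$'' does not follow, and the Gr\"onwall argument in $L^p$ does not close as written. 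This mismatch is precisely why the paper proves the present lemma by the detour through $X$ and then feeds it back as an a priori input when estimating $\cB_{3,1}$ and $\cB_{3,3}^{\star}$ in the proof of Theorem \ref{thm.Milstein}; proving the lemma by the same decomposition is circular unless you break that loop. A repair is available: bound $\|x(\hat s)-Y(\hat s)\|_{L^{2p}}\le C$ via uniform moment bounds for the Milstein iterates (which themselves require a short argument), so that the product term contributes a forcing of order $N^{-(1-2\beta)}=\big(N^{-(\frac{1}{2}-\beta)}\big)^2$. But then Lemma \ref{lem.Gronwall} returns the unweighted bound $CN^{-(\frac{1}{2}-\beta)}$, which does not imply the stated weighted bound when $1-\alpha+\rho-(\frac{1}{2}-\beta)/r>0$ (it happens to suffice for the downstream uses in Theorem \ref{thm.Milstein}, but it is not the statement you set out to prove). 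The paper's two-line detour avoids all of this.
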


\begin{proof}
It can be verified that 
$\| X(t_n) - Y(t_n) \|_{L^p(\Omega,\hR^d)} \leq C N^{-(\frac{1}{2}-\beta)}$ for any $n \in \{1,2,\ldots, N\}$, 
where $X$ and $Y$ denote the numerical solutions of the EM method \eqref{eq.cont-EMscheme} and the Milstein method \eqref{eq.cont-Milstein}, respectively. Then the desired result follows from Theorem \ref{thm.EM} and the triangle inequality.
\end{proof}

\begin{proof}[Proof of Theorem \ref{thm.Milstein}]
According to \eqref{eq.MilsteinExpan} and \eqref{eq.cont-Milstein}, one can obtain
\begin{align*}
&\quad\, x(t_n) - Y(t_n) - \big( \cA_1(t_n) + \cA_2(t_n) + \cA_3(t_n) \big) \notag\\
&= \int_{0}^{t_n} (t_n-s)^{-\alpha} \big( f(x(\hat{s})) - f(Y(\hat{s})) \big) \rd s + \int_{0}^{t_n} (t_n-s)^{-\beta} \big( g(x(\hat{s})) - g(Y(\hat{s})) \big) \rd W(s) \notag\\
&\quad + \bigg( \int_{0}^{t_n} (t_n-s)^{-\beta} g'(x(\hat{s})) \int_0^{\hat{s}} \big( (s-u)^{-\beta} - (\hat{s}-u)^{-\beta} \big) g(x(u)) \rd W(u) \rd W(s) \notag\\
&\qquad\quad - \int_{0}^{t_n} (t_n-s)^{-\beta} g'(Y(\hat{s})) \int_0^{\hat{s}} \big( (s-u)^{-\beta} - (\hat{s}-u)^{-\beta} \big) g(Y(\hat{u})) \rd W(u) \rd W(s) \bigg) \notag\\
&\quad + \bigg( \int_{0}^{t_n} (t_n-s)^{-\beta} g'(x(\hat{s})) \int_{\hat{s}}^s (s-u)^{-\beta} g(x(u)) \rd W(u) \rd W(s) \notag\\
&\qquad\quad - \int_{0}^{t_n} (t_n-s)^{-\beta} g'(Y(\hat{s})) \int_{\hat{s}}^s (s-u)^{-\beta} g(Y(\hat{u})) \rd W(u) \rd W(s) \bigg) \notag\\
&=: \cB_1(t_n) + \cB_2(t_n) + \cB_3(t_n) + \cB_4(t_n),
\end{align*}
which implies
\begin{align*} 
\big\| x(t_n) - Y(t_n) \big\|_{L^p(\Omega,\hR^d)}^2 \leq 7 \sum_{j=1}^3 \big\| \cA_j(t_n) \big\|_{L^p(\Omega,\hR^d)}^2 + 7 \sum_{k=1}^4 \big\| \cB_k(t_n) \big\|_{L^p(\Omega,\hR^d)}^2.
\end{align*}
Here, $\{\cA_j(t_n)\}_{j=1}^3$ has been bounded in Lemma \ref{lem.Ak}. Similarly to \eqref{eq:J2J5}, we have 
\begin{align*} 
\sum_{k=1}^2\big\| \cB_k(t_n) \big\|_{L^p(\Omega,\hR^d)}^2
\leq C \int_{0}^{t_n} (t_n-s)^{-\max\{\alpha,2\beta\}} \big\| x(\hat{s}) - Y(\hat{s}) \big\|_{L^p(\Omega,\hR^d)}^2 \rd s.
\end{align*}
It remains to estimate $\cB_3(t_n)$ and $\cB_4(t_n)$. 

\textbf{Estimate of $\cB_3(t_n)$:} we rewrite $\cB_3(t_n)=\sum_{l=1}^3\cB_{3,l}(t_n)$ with 
{\footnotesize
\begin{align*}
&\cB_{3,1}(t_n) :=\int_{0}^{t_n} (t_n-s)^{-\beta} \Big( g'(x(\hat{s})) - g'(Y(\hat{s})) \Big) \int_0^{\hat{s}} \big( (s-u)^{-\beta} - (\hat{s}-u)^{-\beta} \big) g(x(u)) \rd W(u) \rd W(s), \\
&\cB_{3,2}(t_n) := \int_{0}^{t_n} (t_n-s)^{-\beta} g'(Y(\hat{s})) \int_0^{\hat{s}} \big( (s-u)^{-\beta} - (\hat{s}-u)^{-\beta} \big) \Big( g(x(u)) - g(x(\hat{u})) \Big) \rd W(u) \rd W(s), \\
&\cB_{3,3}(t_n):= \int_{0}^{t_n} (t_n-s)^{-\beta} g'(Y(\hat{s})) \int_0^{\hat{s}} \big( (s-u)^{-\beta} - (\hat{s}-u)^{-\beta} \big) \Big( g(x(\hat{u})) - g(Y(\hat{u})) \Big) \rd W(u) \rd W(s). 
\end{align*}
}It follows from the BDG inequality, H\"older's inequality, the continuity and boundedness of $g''$, Lemma \ref{lem.exMilst} and \eqref{eq.BetaC} that 
\begin{align*}
&\quad\, \big\| \cB_{3,1}(t_n) \big\|_{L^p(\Omega,\hR^d)}^2 \\
&\leq C \int_{0}^{t_n} (t_n-s)^{-2\beta} \big\| x(\hat{s}) - Y(\hat{s}) \big\|_{L^{2p}(\Omega,\hR^d)}^2 \\
&\quad\, \times \Big\| \int_0^{\hat{s}} \big( (s-u)^{-\beta} - (\hat{s}-u)^{-\beta} \big) g(x(u)) \rd W(u) \Big\|_{L^{2p}(\Omega,\hR^d)}^2 \rd s \\
& \leq C \int_{t_1}^{t_n} (t_n-s)^{-2\beta} \big\| x(\hat{s}) - Y(\hat{s}) \big\|_{L^{2p}(\Omega,\hR^d)}^2 \int_0^{\hat{s}} \big| (s-u)^{-\beta} - (\hat{s}-u)^{-\beta} \big|^2\rd u\rd s \\
& \leq C N^{-2(1-2\beta)} \int_{t_1}^{t_n} (t_n-s)^{-2\beta} \hat{s}^{2(1-\alpha+\rho - (\frac{1}{2}-\beta)/r)} \rd s \leq C N^{-2(1-2\beta)}.
\end{align*}

When $n=1$, $\cB_{3,2}(t_n)=\cB_{3,3}(t_n) = 0$. When $n \geq 2$, the BDG inequality, the boundedness of $g'$, Theorem \ref{thm.newRegu}, \eqref{eq.diffSqureDoub1}, \eqref{eq.diffSqureDoub2} and \eqref{eq.BetaC} indicate
\begin{align*}
&\quad\, \big\| \cB_{3,2}(t_n) \big\|_{L^p(\Omega,\hR^d)}^2 \\
&\leq C \int_{t_1}^{t_n} (t_n-s)^{-2\beta} \int_{0}^{\hat{s}} \big| (s-u)^{-\beta} - (\hat{s}-u)^{-\beta} \big|^2 \big\| x(u) - x(\hat{u}) \big\|_{L^p(\Omega,\hR^d)}^2 \rd u \rd s \\
&\leq C N^{-2(1-2\beta)}.
\end{align*}
When $n \geq 2$, the BDG inequality and boundedness of $g'$ show
\begin{align*}
&\quad\, \big\| \cB_{3,3}(t_n) \big\|_{L^p(\Omega,\hR^d)}^2 \\
&\leq C \sum_{i=1}^{n-1} \sum_{j=0}^{i-1} \int_{t_i}^{t_{i+1}} (t_n-s)^{-2\beta} \int_{t_j}^{t_{j+1}} (t_i-u)^{-2\beta} \big\| x(\hat{u}) - Y(\hat{u}) \big\|_{L^p(\Omega,\hR^d)}^2 \rd u \rd s \\
&= C \sum_{j=0}^{n-2} \sum_{i=j+1}^{n-1} \int_{t_i}^{t_{i+1}} (t_n-s)^{-2\beta} \int_{t_j}^{t_{j+1}} (t_i-u)^{-2\beta} \big\| x(\hat{u}) - Y(\hat{u}) \big\|_{L^p(\Omega,\hR^d)}^2 \rd u \rd s \\
&= C \sum_{j=0}^{n-2} \int_{t_{j+1}}^{t_{j+2}} (t_n-s)^{-2\beta} \int_{t_j}^{t_{j+1}} (t_{j+1}-u)^{-2\beta} \big\| x(\hat{u}) - Y(\hat{u}) \big\|_{L^p(\Omega,\hR^d)}^2 \rd u \rd s \\
&\quad\, + C \sum_{j=0}^{n-2} \sum_{i=j+2}^{n-1} \int_{t_j}^{t_{j+1}} \int_{t_i}^{t_{i+1}} (t_n-s)^{-2\beta} (t_i-u)^{-2\beta} \big\| x(\hat{u}) - Y(\hat{u}) \big\|_{L^p(\Omega,\hR^d)}^2 \rd s \rd u \\
&=: C \cB_{3,3}^{\star} + C \cB_{3,3}^{\star\star}.
\end{align*}
By virtue of Lemma \ref{lem.exMilst},
\begin{align*}
\cB_{3,3}^{\star}
&\leq C \sum_{j=1}^{n-2} \int_{t_{j+1}}^{t_{j+2}} (t_n-s)^{-2\beta} \int_{t_j}^{t_{j+1}} (t_{j+1}-u)^{-2\beta} \hat{u}^{2(1-\alpha+\rho - (\frac{1}{2}-\beta)/r)} N^{-2(\frac{1}{2}-\beta)} \rd u \rd s \\
&\leq C N^{-2(1-2\beta)} \sum_{j=1}^{n-2} \int_{t_{j+1}}^{t_{j+2}} (t_n-s)^{-2\beta} s^{2(1-\alpha+\rho - (\frac{1}{2}-\beta)/r)} \rd s \leq C N^{-2(1-2\beta)}.
\end{align*}
On the other hand, by \eqref{eq.BetaInte},
\begin{align*}
\cB_{3,3}^{\star\star}
&\leq C \sum_{j=0}^{n-2} \int_{t_j}^{t_{j+1}} \sum_{i=j+2}^{n-1} \int_{t_i}^{t_{i+1}} (t_n-s)^{-2\beta} (s-u)^{-2\beta} \rd s \big\| x(\hat{u}) - Y(\hat{u}) \big\|_{L^p(\Omega,\hR^d)}^2 \rd u \\
&\leq C \sum_{j=0}^{n-2} \int_{t_j}^{t_{j+1}} (t_n-u)^{1-4\beta} \big\| x(\hat{u}) - Y(\hat{u}) \big\|_{L^p(\Omega,\hR^d)}^2 \rd u. 
\end{align*}
Collecting the estimates of $\cB_{3,3}^{\star}$ and $\cB_{3,3}^{\star\star}$ and using $1-4\beta\ge -2\beta$, we obtain 
\begin{equation*} 
\big\| \cB_3(t_n) \big\|_{L^p(\Omega,\hR^d)}^2
\leq C N^{-2(1-2\beta)} + C \int_{0}^{t_{n}} (t_n-u)^{-2\beta} \big\| x(\hat{u}) - Y(\hat{u}) \big\|_{L^p(\Omega,\hR^d)}^2 \rd u.
\end{equation*}
\textbf{Estimate of $\cB_4(t_n)$:} 
Note that $\cB_4(t_n)=\sum_{l=1}^3\cB_{4,l}(t_n)$ with 
\begin{align*}
&\cB_{4,1}(t_n):= \int_{0}^{t_n} (t_n-s)^{-\beta} \Big( g'(x(\hat{s})) - g'(Y(\hat{s})) \Big) \int_{\hat{s}}^s (s-u)^{-\beta} g(x(u)) \rd W(u) \rd W(s), \\
&\cB_{4,2}(t_n):=\int_{0}^{t_n} (t_n-s)^{-\beta} g'(Y(\hat{s})) \int_{\hat{s}}^s (s-u)^{-\beta} \Big( g(x(u)) - g(x(\hat{u})) \Big) \rd W(u) \rd W(s) , \\
&\cB_{4,3}(t_n):= \int_{0}^{t_n} (t_n-s)^{-\beta} g'(Y(\hat{s})) \int_{\hat{s}}^s (s-u)^{-\beta} \Big( g(x(\hat{u})) - g(Y(\hat{u})) \Big) \rd W(u) \rd W(s).
\end{align*}
Analogously to the estimate of $\| \cB_{3,1}(t_n) \|_{L^p(\Omega,\hR^d)}^2$, one can get 
\begin{align*}
\big\| \cB_{4,1}(t_n) \big\|_{L^p(\Omega,\hR^d)}^2 \leq C N^{-2(1-2\beta)}. 
\end{align*}
In virtue of the BDG inequality and \eqref{eq.BetaC}, one has 
\begin{align*}
&\quad\, \big\| \cB_{4,2}(t_n) \big\|_{L^p(\Omega,\hR^d)}^2 \\
&\leq C \int_{0}^{t_1} (t_n-s)^{-2\beta} \int_{\hat{s}}^s (s-u)^{-2\beta} \big\| x(u) - x(\hat{u}) \big\|_{L^p(\Omega,\hR^d)}^2 \rd u \rd s \\
&\quad + C \int_{t_1}^{t_n} (t_n-s)^{-2\beta} \int_{\hat{s}}^s (s-u)^{-2\beta} \big\| x(u) - x(\hat{u}) \big\|_{L^p(\Omega,\hR^d)}^2 \rd u \rd s \\
& \leq C N^{-2(1-2\beta)} + C \int_{t_1}^{t_n} (t_n-s)^{-2\beta} \int_{\hat{s}}^s (s-u)^{-2\beta} \hat{u}^{2\beta - 1} (u-\hat{u})^{1-2\beta} \rd u \rd s \\
& \leq C N^{-2(1-2\beta)} + C N^{-2(1-2\beta)} \int_{t_1}^{t_n} (t_n-s)^{-2\beta} \hat{s}^{2\beta - 1} \rd s \leq C N^{-2(1-2\beta)}.
\end{align*}
Using the BDG inequality and the boundedness of $g'$ shows
\begin{align*}
\big\| \cB_{4,3}(t_n) \big\|_{L^p(\Omega,\hR^d)}^2 &\leq C \int_{0}^{t_n} (t_n-s)^{-2\beta} \int_{\hat{s}}^s (s-r)^{-2\beta} \big\| x(\hat{u}) - Y(\hat{u}) \big\|_{L^p(\Omega,\hR^d)}^2 \rd u \rd s \\
&\leq C \int_{0}^{t_n} (t_n-s)^{-2\beta} \big\| x(\hat{s}) - Y(\hat{s}) \big\|_{L^p(\Omega,\hR^d)}^2 \rd s.
\end{align*}
Gathering the above estimates leads to
\begin{equation*} 
\big\| \cB_4(t_n) \big\|_{L^p(\Omega,\hR^d)}^2
\leq C N^{-2(1-2\beta)} + C \int_{0}^{t_{n}} (t_n-u)^{-2\beta} \big\| x(\hat{u}) - Y(\hat{u}) \big\|_{L^p(\Omega,\hR^d)}^2 \rd u.
\end{equation*}

Taking Lemma \ref{lem.Ak} into account, it holds that
\begin{align*}
\big\| x(t_n) - Y(t_n) \big\|_{L^p(\Omega,\hR^d)}^2
&\leq C t_n^{2(\frac{1}{2}-\alpha+\beta+\sigma - \sigma/r)} N^{ -2\min\{\frac{3}{2}-\alpha-\beta,\, 1-2\beta\} } \\
&\quad + C \int_{0}^{t_n} (t_n-s)^{-\max\{ \alpha, 2\beta \}} \big\| x(\hat{s}) - Y(\hat{s}) \big\|_{L^p(\Omega,\hR^d)}^2 \rd s.
\end{align*}
Finally, applying the Gr\"onwall inequality (i.e., Lemma \ref{lem.Gronwall}) completes the proof.
\end{proof}

\section{Numerical experiments}
\label{sec.NumerExper}

In this section, we report numerical results for the following illustrative example
\begin{align} \label{eq.numericalExample}
x(t) &= 1 - (1-\alpha) \int_0^t (t-s)^{-\alpha} \sin\big(\frac{1}{2}x(s)\big) \rd s \notag\\
&\quad + \int_0^t (t-s)^{-\beta} \cos\big(\frac{1}{2}x(s)\big) \rd W(s), \quad t \in [0,1],
\end{align}
in order to verify the theoretical results in section \ref{sec.NumericalMethods}. Notice that the drift and diffusion coefficients in \eqref{eq.numericalExample} satisfy the conditions in Theorems \ref{thm.EM}, \ref{thm.fastEM} and \ref{thm.Milstein}. Correspond to the obtained theoretical results, we will consider two kinds of errors:\
\begin{align*}
 \text{Err}_{\text{end}} := \big\| x(t_N) - Z_N \big\|_{L^2(\Omega,\hR)} \quad \mbox{and} \quad \text{Err}_{\max} := \sup_{1 \leq n \leq N} \big\| x(t_n) - Z_n \big\|_{L^2(\Omega,\hR)},
\end{align*}
where $Z$ denotes the numerical solution computed by the EM method \eqref{eq.EMscheme}, the fast EM method \eqref{eq.fastEMscheme} or the Milstein method \eqref{eq.Milstein}. We take the numerical solution calculated by a very fine mesh with $N = 2^{13}$ as the unknown `exact' solution. The Monte Carlo method with $5, 000$ sample paths is used to simulate expectations. And we adopt the least squares fit to obtain the order of numerical errors (see e.g., \cite{Higham2001SIREV}).

In Tables \ref{tab.EM1} and \ref{tab.EM2} (resp.\ Tables \ref{tab.fastEM1} and \ref{tab.fastEM2}), we list the error and convergence order of the EM method (resp.\ the fast EM method), which are consistent with Theorem \ref{thm.EM} (resp.\ Theorem \ref{thm.fastEM}). Table \ref{tab.Test1CPUTime} displays the CPU time of the EM method and the fast EM method, which shows that the fast EM method has a better performance than the EM method. Notice that the implementation of the Milstein method \eqref{eq.Milstein} requires the simulation of the multiple stochastic integral with singular kernels
\begin{align} \label{eq.douSinStoInte}
\sum_{i=0}^{n-1} \int_{t_{i}}^{t_{i+1}} (t_n-s)^{-\beta} g'(Y_i) \bigg( \sum_{j=0}^{i-1} \int_{t_j}^{t_{j+1}} \big( (s-u)^{-\beta} - (t_i-u)^{-\beta} \big) g(Y_j) \rd W(u) \bigg) \rd W(s).
\end{align}
Since there is currently no efficient way for simulating \eqref{eq.douSinStoInte} with $\beta \in (0,\frac{1}{2})$, we take $\beta = 0$ to test the convergence order of the Milstein method. The corresponding numerical results are compatible with Theorem \ref{thm.Milstein}, as presented in Tables \ref{tab.Milstein1} and \ref{tab.Milstein2}.

\begin{table}
\centering
\tabcolsep = 0.38cm
\caption{\label{tab.EM1} The error and convergence order of the EM method \eqref{eq.EMscheme} when $\alpha = 0.9$ and $\beta = 0.1$.}
\begin{tabularx}{\textwidth}{ccccc}
\toprule
\ \
\multirow{2}{*}{$N$}
& \multicolumn{2}{l}{\makecell[c]{$r = 1$}} & \multicolumn{2}{l}{\makecell[c]{$r = 2$}} \\
 \cmidrule(lr){2-3} \cmidrule(lr){4-5} & $\text{Err}_{\text{end}}$ & $\text{Err}_{\max}$ & $\text{Err}_{\text{end}}$ & $\text{Err}_{\max}$ \\
\midrule
\ \ $2^{7}$ & $2.8261e$-$02$ & $6.6831e$-$02$ & $3.7934e$-$02$ & $3.8486e$-$02$ \\
\ \ $2^{8}$ & $2.0945e$-$02$ & $5.7599e$-$02$ & $2.8075e$-$02$ & $2.8349e$-$02$ \\
\ \ $2^{9}$ & $1.5480e$-$02$ & $5.0155e$-$02$ & $2.0450e$-$02$ & $2.0661e$-$02$ \\
\ \ $2^{10}$ & $1.1396e$-$02$ & $4.3819e$-$02$ & $1.5168e$-$02$ & $1.5309e$-$02$ \\
\midrule
\ \ \mbox{Numerical } & \multirow{2}{*}{$0.4367$} & \multirow{2}{*}{$0.2027$} & \multirow{2}{*}{$0.4425$} & \multirow{2}{*}{$0.4446$}
\\\mbox{ Order} \\
\midrule
\ \ \mbox{Theoretical } & \multirow{2}{*}{$0.4$} & \multirow{2}{*}{$0.2$} & \multirow{2}{*}{$0.4$} & \multirow{2}{*}{$0.4$}
\\\mbox{ Order} \\
\bottomrule
\end{tabularx}
\end{table}

\begin{table}
\centering
\tabcolsep = 0.38cm
\caption{\label{tab.EM2} The error and convergence order of the EM method \eqref{eq.EMscheme} when $\alpha = 0.8$ and $\beta = 0.1$.}
\begin{tabularx}{\textwidth}{ccccc}
\toprule
\ \
\multirow{2}{*}{$N$}
& \multicolumn{2}{l}{\makecell[c]{$r = 1$}} & \multicolumn{2}{l}{\makecell[c]{$r = 2$}} \\
 \cmidrule(lr){2-3} \cmidrule(lr){4-5} & $\text{Err}_{\text{end}}$ & $\text{Err}_{\max}$ & $\text{Err}_{\text{end}}$ & $\text{Err}_{\max}$ \\
\midrule
\ \ $2^{7}$ & $2.3741e$-$02$ & $3.0558e$-$02$ & $3.0782e$-$02$ & $3.0981e$-$02$ \\
\ \ $2^{8}$ & $1.7359e$-$02$ & $2.2628e$-$02$ & $2.2374e$-$02$ & $2.2374e$-$02$ \\
\ \ $2^{9}$ & $1.2736e$-$02$ & $1.6976e$-$02$ & $1.6043e$-$02$ & $1.6184e$-$02$ \\
\ \ $2^{10}$ & $9.0637e$-$03$ & $1.2835e$-$02$ & $1.1466e$-$02$ & $1.1535e$-$02$ \\
\midrule
\ \ \mbox{Numerical } & \multirow{2}{*}{$0.4614$} & \multirow{2}{*}{$0.4169$} & \multirow{2}{*}{$0.4754$} & \multirow{2}{*}{$0.4743$}
\\\mbox{ Order} \\
\midrule
\ \ \mbox{Theoretical } & \multirow{2}{*}{$0.4$} & \multirow{2}{*}{$0.4$} & \multirow{2}{*}{$0.4$} & \multirow{2}{*}{$0.4$}
\\\mbox{ Order} \\
\bottomrule
\end{tabularx}
\end{table}

\begin{table}
\centering
\tabcolsep = 0.38cm
\caption{\label{tab.fastEM1} The error and convergence order of the fast EM method \eqref{eq.fastEMscheme} with $\epsilon = 10^{-6}$ when $\alpha = 0.9$ and $\beta = 0.1$.}
\begin{tabularx}{\textwidth}{ccccc}
\toprule
\ \
\multirow{2}{*}{$N$}
& \multicolumn{2}{l}{\makecell[c]{$r = 1$}} & \multicolumn{2}{l}{\makecell[c]{$r = 2$}} \\
 \cmidrule(lr){2-3} \cmidrule(lr){4-5} & $\text{Err}_{\text{end}}$ & $\text{Err}_{\max}$ & $\text{Err}_{\text{end}}$ & $\text{Err}_{\max}$ \\
\midrule
\ \ $2^{7}$ & $2.8261e$-$02$ & $6.6831e$-$02$ & $3.7934e$-$02$ & $3.8486e$-$02$ \\
\ \ $2^{8}$ & $2.0945e$-$02$ & $5.7599e$-$02$ & $2.8075e$-$02$ & $2.8349e$-$02$ \\
\ \ $2^{9}$ & $1.5480e$-$02$ & $5.0155e$-$02$ & $2.0450e$-$02$ & $2.0661e$-$02$ \\
\ \ $2^{10}$ & $1.1396e$-$02$ & $4.3819e$-$02$ & $1.5168e$-$02$ & $1.5309e$-$02$ \\
\midrule
\ \ \mbox{Numerical } & \multirow{2}{*}{$0.4367$} & \multirow{2}{*}{$0.2027$} & \multirow{2}{*}{$0.4425$} & \multirow{2}{*}{$0.4446$}
\\\mbox{ Order} \\
\midrule
\ \ \mbox{Theoretical } & \multirow{2}{*}{$0.4$} & \multirow{2}{*}{$0.2$} & \multirow{2}{*}{$0.4$} & \multirow{2}{*}{$0.4$}
\\\mbox{ Order} \\
\bottomrule
\end{tabularx}
\end{table}

\begin{table}
\centering
\tabcolsep = 0.38cm
\caption{\label{tab.fastEM2} The error and convergence order of the fast EM method \eqref{eq.fastEMscheme} with $\epsilon = 10^{-6}$ when $\alpha = 0.8$ and $\beta = 0.1$.}
\begin{tabularx}{\textwidth}{ccccc}
\toprule
\ \
\multirow{2}{*}{$N$}
& \multicolumn{2}{l}{\makecell[c]{$r = 1$}} & \multicolumn{2}{l}{\makecell[c]{$r = 2$}} \\
 \cmidrule(lr){2-3} \cmidrule(lr){4-5} & $\text{Err}_{\text{end}}$ & $\text{Err}_{\max}$ & $\text{Err}_{\text{end}}$ & $\text{Err}_{\max}$ \\
\midrule
\ \ $2^{7}$ & $2.3741e$-$02$ & $3.0558e$-$02$ & $3.0782e$-$02$ & $3.0981e$-$02$ \\
\ \ $2^{8}$ & $1.7359e$-$02$ & $2.2628e$-$02$ & $2.2374e$-$02$ & $2.2374e$-$02$ \\
\ \ $2^{9}$ & $1.2736e$-$02$ & $1.6976e$-$02$ & $1.6043e$-$02$ & $1.6184e$-$02$ \\
\ \ $2^{10}$ & $9.0637e$-$03$ & $1.2835e$-$02$ & $1.1466e$-$02$ & $1.1535e$-$02$ \\
\midrule
\ \ \mbox{Numerical } & \multirow{2}{*}{$0.4614$} & \multirow{2}{*}{$0.4169$} & \multirow{2}{*}{$0.4754$} & \multirow{2}{*}{$0.4743$}
\\\mbox{ Order} \\
\midrule
\ \ \mbox{Theoretical } & \multirow{2}{*}{$0.4$} & \multirow{2}{*}{$0.4$} & \multirow{2}{*}{$0.4$} & \multirow{2}{*}{$0.4$}
\\\mbox{ Order} \\
\bottomrule
\end{tabularx}
\end{table}

\begin{table}
\centering
\tabcolsep = 0.13cm
\renewcommand{\arraystretch}{1.15}
\caption{\label{tab.Test1CPUTime} The CPU time(s) of the EM method \eqref{eq.EMscheme} and the fast EM method \eqref{eq.fastEMscheme} when $r = 2$, $\alpha = 0.9$, and $\beta = 0.1$.}
\begin{tabularx}{\textwidth}{cccccc}
\toprule
\ \ \multirow{2}{*}{$N$} & \multirow{2}{*}{$\quad$EM method$\quad$} & \multicolumn{4}{l}{\makecell[c]{fast EM method}} \\
 \cmidrule(lr){3-6} & & $\ \ \epsilon = 10^{-6}\ \ $ & $\ \ \epsilon = 10^{-9}\ \ $ & $\ \ \ \epsilon = 10^{-12}\ \ $ & $\ \ \ \epsilon = 10^{-15}\ \ $ \\
\midrule
\ \ $2^{7}$ & $0.974310$ & $0.249094$ & $0.477652$ & $0.567012$ & $0.669200$ \\
\ \ $2^{8}$ & $3.815990$ & $0.771575$ & $0.900727$ & $1.200556$ & $1.397880$ \\
\ \ $2^{9}$ & $14.714308$ & $1.549202$ & $2.004160$ & $2.591853$ & $2.951605$ \\
\ \ $2^{10}$ & $54.902414$ & $3.219936$ & $4.256469$ & $5.675455$ & $6.295207$ \\
\ \ $2^{11}$ & $238.854192$ & $6.750346$ & $8.918369$ & $11.092695$ & $13.060386$ \\
\bottomrule
\end{tabularx}
\end{table}

\begin{table}
\centering
\tabcolsep = 0.38cm
\caption{\label{tab.Milstein1} The error and convergence order of the Milstein method \eqref{eq.Milstein} when $\alpha = 0.9$ and $\beta = 0$.}
\begin{tabularx}{\textwidth}{ccccc}
\toprule
\ \
\multirow{2}{*}{$N$}
& \multicolumn{2}{l}{\makecell[c]{$r = 1$}} & \multicolumn{2}{l}{\makecell[c]{$r = 3$}} \\
 \cmidrule(lr){2-3} \cmidrule(lr){4-5} & $\text{Err}_{\text{end}}$ & $\text{Err}_{\max}$ & $\text{Err}_{\text{end}}$ & $\text{Err}_{\max}$ \\
\midrule
\ \ $2^{7}$ & $1.7249e$-$02$ & $6.5181e$-$02$ & $3.2115e$-$02$ & $3.2269e$-$02$ \\
\ \ $2^{8}$ & $1.1579e$-$02$ & $5.6693e$-$02$ & $2.1550e$-$02$ & $2.1550e$-$02$ \\
\ \ $2^{9}$ & $7.6257e$-$03$ & $4.9459e$-$02$ & $1.4274e$-$02$ & $1.4274e$-$02$ \\
\ \ $2^{10}$ & $4.9995e$-$03$ & $4.3103e$-$02$ & $9.4015e$-$03$ & $9.4261e$-$03$ \\
\midrule
\ \ \mbox{Numerical } & \multirow{2}{*}{$0.5963$} & \multirow{2}{*}{$0.1987$} & \multirow{2}{*}{$0.5911$} & \multirow{2}{*}{$0.5921$}
\\\mbox{ Order} \\
\midrule
\ \ \mbox{Theoretical } & \multirow{2}{*}{$0.6$} & \multirow{2}{*}{$0.2$} & \multirow{2}{*}{$0.6$} & \multirow{2}{*}{$0.6$}
\\\mbox{ Order} \\
\bottomrule
\end{tabularx}
\end{table}

\begin{table}
\centering
\tabcolsep = 0.38cm
\caption{\label{tab.Milstein2} The error and convergence order of the Milstein method \eqref{eq.Milstein} when $\alpha = 0.5$ and $\beta = 0$.}
\begin{tabularx}{\textwidth}{ccccc}
\toprule
\ \
\multirow{2}{*}{$N$}
& \multicolumn{2}{l}{\makecell[c]{$r = 1$}} & \multicolumn{2}{l}{\makecell[c]{$r = 3$}} \\
 \cmidrule(lr){2-3} \cmidrule(lr){4-5} & $\text{Err}_{\text{end}}$ & $\text{Err}_{\max}$ & $\text{Err}_{\text{end}}$ & $\text{Err}_{\max}$ \\
\midrule
\ \ $2^{7}$ & $2.7171e$-$03$ & $2.8104e$-$03$ & $7.1213e$-$03$ & $7.1213e$-$03$ \\
\ \ $2^{8}$ & $1.4205e$-$03$ & $1.4578e$-$03$ & $3.7561e$-$03$ & $3.7561e$-$03$ \\
\ \ $2^{9}$ & $7.1720e$-$04$ & $7.4171e$-$04$ & $1.9217e$-$03$ & $1.9246e$-$03$ \\
\ \ $2^{10}$ & $3.5872e$-$04$ & $3.7156e$-$04$ & $9.7635e$-$04$ & $9.7635e$-$04$ \\
\midrule
\ \ \mbox{Numerical } & \multirow{2}{*}{$0.9749$} & \multirow{2}{*}{$0.9732$} & \multirow{2}{*}{$0.9783$} & \multirow{2}{*}{$0.9783$}
\\\mbox{ Order} \\
\midrule
\ \ \mbox{Theoretical } & \multirow{2}{*}{$1.0$} & \multirow{2}{*}{$1.0$} & \multirow{2}{*}{$1.0$} & \multirow{2}{*}{$1.0$}
\\\mbox{ Order} \\
\bottomrule
\end{tabularx}
\end{table}

\appendix

\section{Proof of the Gr\"onwall-type inequality}
\label{appendix.Gronwall}

To facilitate the proof of Lemma \ref{lem.Gronwall}, we first prove the special case $\gamma = 1$.

\begin{lemma}\label{lem.GW1}
Let $\mu \in (0,1]$, $C_1 > 0$, $C_2 > 0$, $t_n \in \cM_r$ with $n \in \{1,2,\ldots,N\}$ and $r \geq 1$. If the non-negative sequence $\{z_n\}_{n=1}^N$ satisfies
\begin{align*}
z_n \leq C_1 t_n^{\mu-1} + C_2 \sum_{j=1}^{n-1} \int_{t_j}^{t_{j+1}} z_j \rd s \quad \forall\, n \in \{1,2,\ldots,N\},
\end{align*}
then $z_n\leq (1+ C_2T\mu^{-1}e^{C_2T}) C_1 t_{n}^{\mu-1}$ for all $n \in \{1,2,\ldots,N\}.$
\end{lemma}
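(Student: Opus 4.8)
The inequality is a discrete Grönwall estimate whose forcing term $C_1 t_n^{\mu-1}$ is non-increasing in $n$ when $\mu<1$; this is precisely what blocks a direct appeal to the textbook discrete Grönwall lemma (which requires a non-decreasing forcing term), so the plan is to keep the cumulative-sum term as a separate unknown, unroll a linear recursion for it, and feed in a sharp bound for the weighted sum $\sum_j t_j^{\mu-1} h_{j+1}$ that exploits the clustering of $\cM_r$ near $t=0$.

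First I would note that, since $z_j$ does not depend on $s$, the hypothesis reads $z_n \le C_1 t_n^{\mu-1} + C_2 \sum_{j=1}^{n-1} z_j h_{j+1}$ with $h_{j+1} = t_{j+1}-t_j$. Setting $w_n := \sum_{j=1}^{n-1} z_j h_{j+1}$ (so $w_1=0$ and $w_{n+1} = w_n + z_n h_{n+1}$) and inserting $z_n \le C_1 t_n^{\mu-1} + C_2 w_n$ yields the one-step recursion $w_{n+1} \le (1+C_2 h_{n+1}) w_n + C_1 t_n^{\mu-1} h_{n+1}$. Iterating this from $w_1=0$ gives
\[
w_n \le C_1 \sum_{j=1}^{n-1} t_j^{\mu-1} h_{j+1} \prod_{k=j+2}^{n} (1+C_2 h_k),
\]
and each product is bounded by $\exp\!\big(C_2 \sum_{k=j+2}^{n} h_k\big) = \exp\!\big(C_2(t_n - t_{j+1})\big) \le e^{C_2 T}$ since $\sum_{k} h_k = t_N = T$.

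The crux is then to show $\sum_{j=1}^{n-1} t_j^{\mu-1} h_{j+1} \le c_r\, \mu^{-1} t_n^{\mu}$ for a constant $c_r \ge 1$ with $c_1 = 1$. A naive comparison of this sum with $\int_0^{t_n} t^{\mu-1}\,\rd t = \mu^{-1} t_n^{\mu}$ fails, because $t\mapsto t^{\mu-1}$ is non-increasing and $t_j^{\mu-1} h_{j+1}$ evaluates the integrand at the \emph{left} endpoint of $[t_j, t_{j+1}]$, giving only a lower bound. The remedy uses the graded-mesh ratio estimate \eqref{eq.LaterStepFormerStep}, which yields $h_{j+1} \le c_r\, h_j$ for every $j\ge 1$ with $c_r := \max\{r2^{r-1},\,3^{r-1}\}$; combining this with $t_j^{\mu-1} h_j \le \int_{t_{j-1}}^{t_j} t^{\mu-1}\,\rd t$ (valid because $t^{\mu-1} \ge t_j^{\mu-1}$ on $[t_{j-1},t_j]$, with $t_0=0$ and using $\mu\le 1$ for $j=1$) and telescoping over $j$ gives the claim. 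Putting the pieces together, $w_n \le c_r\,\mu^{-1} e^{C_2 T} C_1 t_n^{\mu}$, hence
\[
z_n \le C_1 t_n^{\mu-1} + C_2 w_n \le \big(1 + c_r\,\mu^{-1} C_2 t_n\, e^{C_2 T}\big) C_1 t_n^{\mu-1} \le \big(1 + c_r\,\mu^{-1} C_2 T\, e^{C_2 T}\big) C_1 t_n^{\mu-1},
\]
using $t_n \le T$; for the uniform mesh $r=1$ one has $c_r = 1$, which is exactly the asserted constant.

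The main obstacle is the weighted-sum estimate: it must deliver $\mathrm{const}\cdot t_n^{\mu}$ rather than $\mathrm{const}\cdot T^{\mu}$ (the latter would be worthless for small $n$, where $t_n^{\mu-1}$ is large but $t_n^{\mu}$ is small), and this genuinely requires the step-ratio bound coming from the graded structure of $\cM_r$, not merely an integral comparison. The rest is the standard unrolling of a linear recursion together with the elementary estimate $1+x\le e^{x}$.
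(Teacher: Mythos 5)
Your proof is correct, but it takes a genuinely different route from the paper's. The paper iterates the hypothesis into itself $k$ times, producing $k$-fold nested sums of the form $\sum_{j_1}\int\cdots\sum_{j_k}\int t_{j_k}^{\mu-1}$, bounds these by iterated integrals of $t^{\mu-1}$ to get $\tfrac{1}{\mu(\mu+1)\cdots(\mu+k-1)}t_n^{\mu+k-1}$, and then sums the resulting exponential-type series. You instead use the ``product form'' of the discrete Gr\"onwall argument: introduce the cumulative variable $w_n$, unroll the one-step linear recursion, bound the products $\prod(1+C_2h_k)$ by $e^{C_2T}$, and isolate the singular weight in a single estimate $\sum_j t_j^{\mu-1}h_{j+1}\leq c_r\mu^{-1}t_n^{\mu}$, obtained by shifting from a left-endpoint to a right-endpoint Riemann sum via the step-ratio bound \eqref{eq.LaterStepFormerStep}. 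Your version localizes the only nontrivial use of the graded-mesh structure to one clearly identified inequality, whereas the paper's version buries the analogous issue inside the innermost integral of each nested sum (where it invokes \eqref{eq.s} to pass from $t_{j_k}^{\mu-1}$ to $s_k^{\mu-1}$). Both arguments are of comparable length and yield the same qualitative conclusion.

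One minor point: your final constant carries the factor $c_r=\max\{r2^{r-1},3^{r-1}\}$, so for $r>1$ you do not literally reproduce the constant $1+C_2T\mu^{-1}e^{C_2T}$ asserted in the lemma. You flag this honestly, and it is not a real defect: the paper's own proof silently drops the factor $\max\{1,2^{-r(\mu-1)}\}=2^{r(1-\mu)}$ from \eqref{eq.s} at the innermost step (the bare inequality $t_{j_k}^{\mu-1}\leq s_k^{\mu-1}$ is false for $s_k>t_{j_k}$ since $\mu-1\leq 0$), so its stated constant is likewise only exact for $r=1$. Since Lemma \ref{lem.Gronwall} only requires the existence of \emph{some} constant $C_3$ independent of $C_1$, the $r$-dependence is immaterial for every downstream application.
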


\begin{proof}
Iteratively using the condition of this lemma yields
\begin{align*}
z_n&\leq C_1 t_n^{\mu-1}+C_2 \sum_{j_1 = 1}^{n-1} \int_{t_{j_1}}^{t_{j_1+1}} z_{j_1} \rd s_1\\
&\leq C_1t_n^{\mu-1} + C_1 C_2 \sum_{j_1 = 1}^{n-1} \int_{t_{j_1}}^{t_{j_1+1}} t_{j_1}^{\mu-1} \rd s_1+C_2^2 \sum_{j_1 = 1}^{n-1} \int_{t_{j_1}}^{t_{j_1+1}} \sum_{j_2 = 1}^{j_1-1} \int_{t_{j_2}}^{t_{j_2+1}} z_{j_2} \rd s_2 \rd s_1 \\
&\cdots\\
&\leq C_1t_n^{\mu-1}+C_1\sum_{k=1}^{n-1} C_2^k \sum_{j_1 = 1}^{n-1} \int_{t_{j_1}}^{t_{j_1+1}}\sum_{j_2 = 1}^{j_1-1} \int_{t_{j_2}}^{t_{j_2+1}} \cdots\sum_{j_k = 1}^{j_{k-1}-1}\int_{t_{j_k}}^{t_{j_k+1}} t_{j_k}^{\mu-1} \rd s_k \cdots \rd s_2 \rd s_1 .
\end{align*}
Here, it follows from \eqref{eq.s} that for all $k \in \{1,2,\ldots,n-1\}$,
\begin{align*}
&\quad\, \sum_{j_1 = 1}^{n-1} \int_{t_{j_1}}^{t_{j_1+1}}\sum_{j_2 = 1}^{j_1-1} \int_{t_{j_2}}^{t_{j_2+1}} \cdots\sum_{j_k = 1}^{j_{k-1}-1}\int_{t_{j_k}}^{t_{j_k+1}} t_{j_k}^{\mu-1} \rd s_k \cdots \rd s_2 \rd s_1 \\
&\leq \sum_{j_1 = 1}^{n-1} \int_{t_{j_1}}^{t_{j_1+1}}\sum_{j_2 = 1}^{j_1-1} \int_{t_{j_2}}^{t_{j_2+1}} \cdots\sum_{j_k = 1}^{j_{k-1}-1}\int_{t_{j_k}}^{t_{j_k+1}} s_k^{\mu-1} \rd s_k \cdots \rd s_2 \rd s_1 \\
&\leq \frac{1}{\mu}\sum_{j_1 = 1}^{n-1} \int_{t_{j_1}}^{t_{j_1+1}}\sum_{j_2 = 1}^{j_1-1} \int_{t_{j_2}}^{t_{j_2+1}} \cdots\sum_{j_{k-1} = 1}^{j_{k-2}-1}\int_{t_{j_{k-1}}}^{t_{j_{k-1}+1}} t_{j_{k-1}}^{\mu} \rd s_{k-1} \cdots \rd s_2 \rd s_1 \\
&\leq \frac{1}{\mu}\sum_{j_1 = 1}^{n-1} \int_{t_{j_1}}^{t_{j_1+1}}\sum_{j_2 = 1}^{j_1-1} \int_{t_{j_2}}^{t_{j_2+1}} \cdots\sum_{j_{k-1} = 1}^{j_{k-2}-1}\int_{t_{j_{k-1}}}^{t_{j_{k-1}+1}} s_{k-1}^{\mu} \rd s_{k-1} \cdots \rd s_2 \rd s_1 \\
&\leq \frac{1}{\mu(\mu+1)}\sum_{j_1 = 1}^{n-1} \int_{t_{j_1}}^{t_{j_1+1}}\sum_{j_2 = 1}^{j_1-1} \int_{t_{j_2}}^{t_{j_2+1}} \cdots\sum_{j_{k-2} = 1}^{j_{k-3}-1}\int_{t_{j_{k-2}}}^{t_{j_{k-2}+1}} t_{j_{k-2}}^{\mu+1} \rd s_{k-2} \cdots \rd s_2 \rd s_1 \\
&\leq \cdots\\
&\leq \frac{1}{\mu(\mu+1)\cdots(\mu+k-2)}\sum_{j_1 = 1}^{n-1} \int_{t_{j_1}}^{t_{j_1+1}} t_{j_{1}}^{\mu+k-2} \rd s_1\\
&\leq \frac{1}{\mu(\mu+1)\cdots(\mu+k-1)} t_{n}^{\mu+k-1}.
 \end{align*}
Thus, one can deduce
\begin{align*}
z_n
&\leq C_1t_n^{\mu-1}+C_1\sum_{k=1}^{n-1} \frac{C_2^k}{\mu(\mu+1)\cdots(\mu+k-1)} t_{n}^{\mu+k-1} \\
&\leq C_1t_n^{\mu-1}+C_1 t_{n}^{\mu-1}\sum_{k=1}^{n-1} \frac{C_2^kT^k}{\mu(\mu+1)\cdots(\mu+k-1)}\\
&\leq C_1t_n^{\mu-1}+C_1 C_2T\mu^{-1}t_{n}^{\mu-1}\sum_{k=1}^{n-1} \frac{(C_2T)^{k-1}}{(k-1)!}\\
&\leq C_1t_n^{\mu-1}+C_1 C_2T\mu^{-1}t_{n}^{\mu-1}e^{C_2T},
\end{align*}
which completes the proof.
\end{proof}

\begin{proof}[Proof of Lemma \ref{lem.Gronwall}]
Let $n \in \{2,3,\ldots,N\}$ and $m \in \{1,2,\ldots,n-1\}$ be arbitrary. It follows from \eqref{eq.BetaC} and $t_n-t_1\ge\frac{1}{2}t_n$ that
\begin{align} 
\sum_{j=1}^{n-m} \int_{t_j}^{t_{j+1}} (t_n-s)^{m\gamma-1} t_j^{\mu-1} \rd s
&\leq 2^{r(1-\mu)} B(m\gamma,\mu) (t_n-t_1)^{m\gamma+\mu-1} \notag\\
&\leq \widetilde{B}(m\gamma,\mu) t_n^{m\gamma+\mu-1}, \label{eq.tidleB}
\end{align}
where $\widetilde{B}(m\gamma,\mu) := 2^{r(1-\mu)} B(m\gamma,\mu)\max\{2^{1-m\gamma-\mu},1\}$. Notice that
\begin{align}
&\quad\, \sum_{j=1}^{n-m} \int_{t_j}^{t_{j+1}} (t_n-s)^{m\gamma-1} \left( \sum_{k=1}^{j-1} \int_{t_k}^{t_{k+1}} (t_j-u)^{\gamma-1} z_k \rd u \right) \rd s \notag \\
&= \sum_{k=1}^{n-m-1} \int_{t_k}^{t_{k+1}} \bigg( \sum_{j=k+1}^{n-m} \int_{t_j}^{t_{j+1}} (t_n-s)^{m\gamma-1} (t_j-u)^{\gamma-1} \rd s \bigg) z_k \rd u \notag \\
&= \sum_{k=1}^{n-m-1} \int_{t_k}^{t_{k+1}} \bigg( \int_{t_{k+1}}^{t_{k+2}} (t_n-s)^{m\gamma-1} (t_{k+1}-u)^{\gamma-1} \rd s \bigg) z_k \rd u \notag \\
&\quad + \sum_{k=1}^{n-m-1} \int_{t_k}^{t_{k+1}} \bigg( \sum_{j=k+2}^{n-m} \int_{t_j}^{t_{j+1}} (t_n-s)^{m\gamma-1} (t_j-u)^{\gamma-1} \rd s \bigg) z_k \rd u \notag \\
&=: \cK_1 + \cK_2. \label{eq.cK0}
\end{align}
For all $k \in \{1,2,\ldots,n-m-1\}$, $j \in \{k+2, k+3, \ldots, n-m\}$, $u\in(t_k,t_{k+1})$ and $s\in(t_j,t_{j+1})$, it follows from \eqref{eq.LaterStepFormerStep} that $s-u \leq (1+3^{r-1}) (t_j-u)$. Thus, by \eqref{eq.BetaInte},
\begin{align*}
&\quad\, \sum_{j=k+2}^{n-m} \int_{t_j}^{t_{j+1}} (t_n-s)^{m\gamma-1} (t_j-u)^{\gamma-1} \rd s \\
&\leq (1+3^{r-1})^{1-\gamma} \sum_{j=k+2}^{n-m} \int_{t_j}^{t_{j+1}} (t_n-s)^{m\gamma-1} (s-u)^{\gamma-1} \rd s\\
&\leq (1+3^{r-1})^{1-\gamma} \int_{u}^{t_n} (t_n-s)^{m\gamma-1} (s-u)^{\gamma-1} \rd s\\
&= (1+3^{r-1})^{1-\gamma} B(m\gamma,\gamma) (t_n-u)^{(m+1)\gamma-1},
\end{align*}
which reads
\begin{align}\label{eq.cK2}
\cK_2
\leq (1+3^{r-1})^{1-\gamma} B(m\gamma,\gamma)\sum_{k=1}^{n-m-1} \int_{t_k}^{t_{k+1}} (t_n-s)^{(m+1)\gamma-1} \rd s\, z_k.
\end{align} 
We claim that for any $1 \leq k \leq n-2$, 
\begin{align} \label{eq.calimGronwall}
\int_{t_{k+1}}^{t_{k+2}} (t_n-s)^{m\gamma-1} \rd s 
\leq C(r,m,\gamma) \int_{t_{k}}^{t_{k+1}} (t_{n-1}-s)^{m\gamma-1} \rd s 
\end{align}
with $C(r,m,\gamma) := r 3^{r-1} \max\{ (1+r 3^{r-1})^{m\gamma-1}, 2^{1-m\gamma} \}$. To prove \eqref{eq.calimGronwall}, by the change of variables, we formulate 
\begin{align*}
\int_{t_{k+1}}^{t_{k+2}} (t_n-s)^{m\gamma-1} \rd s
&= \int_{0}^{1} (t_n - t_{k+1} - h_{k+2}\tau)^{m\gamma-1} h_{k+2} \rd \tau, \\
\int_{t_{k}}^{t_{k+1}} (t_{n-1}-s)^{m\gamma-1} \rd s
&= \int_{0}^{1} (t_{n-1} - t_{k} - h_{k+1}\tau)^{m\gamma-1} h_{k+1} \rd \tau. 
\end{align*} 
It follows from \eqref{eq.LaterStepFormerStep} that $h_{k+2} \leq r 3^{r-1} h_{k+1}$, which implies that the proof of \eqref{eq.calimGronwall} boils down to prove that for any $1 \leq k \leq n-2$, 
\begin{align} \label{eq.aimMGamma}
\int_{0}^{1} (t_n - t_{k+1} - h_{k+2}\tau)^{m\gamma-1} \rd \tau 
\leq \widetilde{C}(r,m,\gamma) \int_{0}^{1} (t_{n-1} - t_{k} - h_{k+1}\tau)^{m\gamma-1} \rd \tau
\end{align}
with $\widetilde{C}(r,m,\gamma) := \max\{ (1+r 3^{r-1})^{m\gamma-1}, 2^{1-m\gamma} \}$. Recall that $h_{n-1} \leq h_{n} \leq r 3^{r-1} h_{n-1}$ due to \eqref{eq.LaterStepFormerStep}. When $k = n-2$, one can get 
\begin{align*}
\int_{0}^{1} (h_n - h_{n}\tau)^{m\gamma-1} \rd \tau 
\leq \max\{ (r 3^{r-1})^{m\gamma-1}, 1 \} \int_{0}^{1} (h_{n-1} - h_{n-1}\tau)^{m\gamma-1} \rd \tau.
\end{align*}
This shows that \eqref{eq.aimMGamma} holds for the case $k = n-2$. When $1 \leq k \leq n-3$, one has
\begin{align} \label{eq.mgamma>1}
t_n - t_{k+1}
&= t_{n-1} - t_{k+1} + h_n 
\leq t_{n-1} - t_{k+1} + r 3^{r-1} h_{n-1} \notag\\
&\leq (1+r 3^{r-1}) (t_{n-1} - t_{k+1})
\end{align}
and 
\begin{align} \label{eq.mgamma<1}
t_{n-1} - t_{k} 
&= t_{n} - t_{k+2} - t_{n} + t_{k+2} + t_{n-1} - t_{k} \notag\\
&= t_{n} - t_{k+2} + t_{k+2} - t_{k} - h_n 
\leq 2(t_{n} - t_{k+2}). 
\end{align} 
In fact, \eqref{eq.mgamma>1} and \eqref{eq.mgamma<1} indicate that for any $\tau \in [0,1]$, 
\begin{align*}
t_n - t_{k+1} - h_{k+2} \tau
&\leq t_n - t_{k+1} 
\leq (1+r 3^{r-1}) (t_{n-1} - t_{k+1}) \\
&\leq (1+r 3^{r-1}) (t_{n-1} - t_{k} - h_{k+1}\tau)
\end{align*}
and 
\begin{align*}
t_n - t_{k+1} - h_{k+2}\tau 
&\geq t_n - t_{k+1} - h_{k+2} 
= t_n - t_{k+2} \\
&\geq \frac{1}{2} (t_{n-1} - t_{k}) \geq \frac{1}{2} (t_{n-1} - t_{k} - h_{k+1}\tau), 
\end{align*}
respectively. Then \eqref{eq.aimMGamma} holds for the case $1 \leq k \leq n-3$. Hence the claim \eqref{eq.calimGronwall} holds. By \eqref{eq.LaterStepFormerStep}, one can also obtain
\begin{equation*} 
\begin{aligned}
&\quad\, \int_{t_{k}}^{t_{k+1}} (t_{n-1}-s)^{m\gamma-1} \rd s \\
&\leq \left\{
\begin{split}
& \max \Big\{ (1+3^{r-1})^{1-m\gamma},\, 1 \Big\} \int_{t_{k}}^{t_{k+1}} (t_{n}-s)^{m\gamma-1} \rd s, & \text{if } 1\le k\le n-3, \\
& \max \Big\{ \frac{(1+ \max\{r 2^{r-1},\, 3^{r-1}\})^{1-m\gamma}}{m\gamma}, 1 \Big\} \int_{t_{k}}^{t_{k+1}} (t_{n}-s)^{m\gamma-1} \rd s, \!\!\!\!\!\!\!\!\!& \text{if } k= n-2, 
\end{split}
\right.
\end{aligned}
\end{equation*}
which together with \eqref{eq.calimGronwall} reads 
\begin{align}
\cK_1
&\leq \frac1\gamma \sum_{k=1}^{n-m-1}(t_{k+1}-t_k)^{\gamma} \int_{t_{k+1}}^{t_{k+2}} (t_n-s)^{m\gamma-1} \rd s\, z_k \notag\\
&\leq C(r,m,\gamma) \frac1\gamma \sum_{k=1}^{n-m-1}(t_{k+1}-t_k)^{\gamma} \int_{t_{k}}^{t_{k+1}} (t_{n-1}-s)^{m\gamma-1} \rd s\, z_k \notag\\
&\leq C(r,m,\gamma) \frac1\gamma \max \Big\{ 3^r,\, \frac{r3^r}{m\gamma} \Big\} \sum_{k=1}^{n-m-1}(t_{k+1}-t_k)^{\gamma} \int_{t_{k}}^{t_{k+1}} (t_{n}-s)^{m\gamma-1} \rd s\, z_k \notag\\
&\leq C(r,m,\gamma) \frac{3^r}{\gamma} \max \Big\{ 1,\, \frac{r}{m\gamma} \Big\} \sum_{k=1}^{n-m-1} \int_{t_{k}}^{t_{k+1}} (t_{n}-s)^{(m+1)\gamma-1} \rd s\, z_k. \label{eq.cK1}
\end{align}
Collecting \eqref{eq.cK0}, \eqref{eq.cK2} and \eqref{eq.cK1} leads to
\begin{align}
&\quad\, \sum_{j=1}^{n-m} \int_{t_j}^{t_{j+1}} (t_n-s)^{m\gamma-1} \left( \sum_{k=1}^{j-1} \int_{t_k}^{t_{k+1}} (t_j-u)^{\gamma-1} z_k \rd u \right) \rd s \notag\\
&\le\widehat{B}(m\gamma,\gamma)\sum_{k=1}^{n-m-1} \int_{t_{k}}^{t_{k+1}} (t_{n}-s)^{(m+1)\gamma-1} \rd s\, z_k, \label{eq.cK}
\end{align}
where $\widehat{B}(m\gamma,\gamma) := C(r,m,\gamma) 3^r\gamma^{-1} \max \big\{1,\frac{r}{m\gamma} \big\} + (1+3^{r-1})^{1-\gamma} B(m\gamma,\gamma)$.
By iteration, \eqref{eq.tidleB} and \eqref{eq.cK},
\begin{align*}
z_n
&\leq C_1 t_n^{\mu-1} + C_2 \sum_{j=1}^{n-1} \int_{t_j}^{t_{j+1}} (t_n-s)^{\gamma-1} z_j \rd s \\
&\leq C_1 t_n^{\mu-1} + C_2 \sum_{j=1}^{n-1} \int_{t_j}^{t_{j+1}} (t_n-s)^{\gamma-1} \rd s \left( C_1 t_j^{\mu-1} + C_2 \sum_{k=1}^{j-1} \int_{t_k}^{t_{k+1}} (t_j-u)^{\gamma-1} z_k \rd u \right) \\
&\leq C_1 t_n^{\mu-1} + C_2 \widetilde{B}(\gamma,\mu) C_1 t_n^{\gamma+\mu-1} +
C_2^2 \widehat{B}(\gamma,\gamma) \sum_{j=1}^{n-2} \int_{t_j}^{t_{j+1}} (t_n-s)^{2\gamma-1} z_j \rd s\\
&\leq \cdots\\
&\leq C_1 t_n^{\mu-1} + \sum_{k = 1}^{m} \left( \prod_{i = 1}^{k-1} \widehat{B}(i\gamma,\gamma) \right) C_2^k \widetilde{B}(k\gamma,\mu) C_1 t_n^{k\gamma+\mu-1} \\
&\quad + C_2^{m+1} \left( \prod_{i = 1}^m \widehat{B}(i\gamma,\gamma) \right) \cdot \sum_{j = 1}^{n-m-1} \int_{t_j}^{t_{j+1}} (t_n-s)^{(m+1)\gamma-1} z_j \rd s.
\end{align*}
For any $\gamma \in (0,1)$, one can fix a finite integer $m_0 \in (\gamma^{-1}-1, \gamma^{-1}]$ such that $(m+1)\gamma-1 > 0$ for all $m \geq m_0$. When $n=1$, Lemma \ref{lem.Gronwall} is trivial. When $2\leq n\leq m_0$,
\begin{align*}
z_n
&\leq C_1 t_n^{\mu-1} + \sum_{k = 1}^{m_0} \left( \prod_{i = 1}^{k-1} \widehat{B}(i\gamma,\gamma) \right) C_2^k \widetilde{B}(k\gamma,\mu) C_1 t_n^{k\gamma+\mu-1} \\
&\leq \left(1+\sum_{k = 1}^{m_0} \left( \prod_{i = 1}^{k-1} \widehat{B}(i\gamma,\gamma) \right) C_2^k \widetilde{B}(k\gamma,\mu) T^{k\gamma}\right) C_1 t_n^{\mu-1},
\end{align*}
which implies that Lemma \ref{lem.Gronwall} holds for the case $2\leq n\leq m_0$. When $n\ge m_0+1$,
\begin{align*}
z_n
&\leq C_1 t_n^{\mu-1} + \sum_{k = 1}^{m_0} \left( \prod_{i = 1}^{k-1} \widehat{B}(i\gamma,\gamma) \right) C_2^k \widetilde{B}(k\gamma,\mu) C_1 t_n^{k\gamma+\mu-1} \\
&\quad + C_2^{{m_0}+1} \left( \prod_{i = 1}^{m_0} \widehat{B}(i\gamma,\gamma) \right) \sum_{j = 1}^{n-{m_0}-1} \int_{t_j}^{t_{j+1}} (t_n-s)^{({m_0}+1)\gamma-1} z_j \rd s\\
&\leq \left(1+\sum_{k = 1}^{m_0} \left( \prod_{i = 1}^{k-1} \widehat{B}(i\gamma,\gamma) \right) C_2^k \widetilde{B}(k\gamma,\mu) T^{k\gamma}\right) C_1 t_n^{\mu-1}\\
&\quad+C_2^{{m_0}+1} T^{({m_0}+1)\gamma-1}\left( \prod_{i = 1}^{m_0} \widehat{B}(i\gamma,\gamma) \right) \sum_{j = 1}^{n-{m_0}-1} \int_{t_j}^{t_{j+1}} z_j \rd s,
\end{align*}
which in combination with Lemma \ref{lem.GW1} completes the proof.
\end{proof}

\section{An auxiliary lemma}

\begin{lemma}
Let $\alpha \in (0,1)$ and $\beta \in (0,\frac{1}{2})$, $t_n \in \cM_r$ with $n \in \{2,3,\ldots,N\}$ and $r \geq 1$. Then there exists $C = C(\alpha,\beta,r,T)$ such that
\begin{gather} 
\int_{t_1}^{t_n} (t_n-s)^{-2\beta} \int_{0}^{t_1} \big| (s-u)^{-\beta} - (\hat{s}-u)^{-\beta} \big|^2 \rd u \rd s
\leq C N^{-2(1-2\beta)}, \label{eq.diffSqureDoub1} \\ 
\int_{t_1}^{t_n} (t_n-s)^{-2\beta} \int_{t_1}^{\hat{s}} \big| (s-u)^{-\beta} - (\hat{s}-u)^{-\beta} \big|^2 \hat{u}^{2\beta - 1} (u-\hat{u})^{1-2\beta} \rd u \rd s
\leq C N^{-2(1-2\beta)}, \label{eq.diffSqureDoub2} \\ 
\int_{0}^{t_{n-1}} \left| \int_{\check{u}}^{t_{n}} (t_n-s)^{-\alpha} \big( (s-u)^{-\beta} - (\hat{s}-u)^{-\beta} \big) \rd s \right|^2 \rd u
\leq C N^{-2\min\{\frac{3}{2}-\alpha-\beta,\, 1-2\beta\}}. \label{lem.inA2}
\end{gather} 
\end{lemma}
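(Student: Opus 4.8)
The plan is to reduce all three inequalities to one common toolkit and then dispatch them in turn. The toolkit consists of: (i) the pointwise increment bounds for the singular kernel, namely for $0<u<\hat s\le s$,
\begin{align*}
\big|(s-u)^{-\beta}-(\hat s-u)^{-\beta}\big|\le\min\!\big\{(\hat s-u)^{-\beta},\ \beta(\hat s-u)^{-\beta-1}(s-\hat s)\big\},
\end{align*}
together with their consequence $\int_0^{L}\big|(v+\tau)^{-\beta}-v^{-\beta}\big|^2\,\rd v\le C\tau^{1-2\beta}$ for all $L,\tau>0$ (obtained by splitting the integral at $v=\tau$); (ii) the mesh facts \eqref{eq.singleSize}, \eqref{eq.LaterStepFormerStep}, \eqref{eq.multiSize}, \eqref{eq.s}, \eqref{eq.BetaC}, the bound $h_n\le CN^{-1}$, the monotonicity $h_k\le h_{k+1}$ (hence $h_k\asymp h_{k\pm1}$, $h_{i+1}\le CN^{-1}t_i^{(r-1)/r}$, and $\hat s-t_1\asymp\hat s\asymp s$ for $\hat s\ge t_2$); and (iii) the discrete Beta bound $\sum_{k=1}^{m-1}k^{\lambda-1}(m-k)^{\rho-1}\le Cm^{\lambda+\rho-1}$ for $\lambda,\rho>0$, which also covers $\sum_k k^{\lambda}(m-k)^{\rho-1}$ with $\lambda\ge0$ via $k^{\lambda}\le m^{\lambda}$.

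For \eqref{eq.diffSqureDoub1} I would split the outer integral as $\int_{t_1}^{t_2}+\int_{t_2}^{t_n}$. On $[t_1,t_2)$ we have $\hat s=t_1$, the crude bound gives $\int_0^{t_1}|(s-u)^{-\beta}-(t_1-u)^{-\beta}|^2\,\rd u\le Ct_1^{1-2\beta}$, and $\int_{t_1}^{t_2}(t_n-s)^{-2\beta}\,\rd s\le CN^{-r(1-2\beta)}$ by \eqref{eq.singleSize} and \eqref{eq.multiSize}. On $[t_2,t_n]$ we have $\hat s-u\ge\hat s-t_1\asymp\hat s\asymp s$ for $u\le t_1$, so the mean-value bound yields $\int_0^{t_1}|\cdots|^2\,\rd u\le C(s-\hat s)^2\,t_1\,\hat s^{-2\beta-2}$; inserting $h_{i+1}\le CN^{-1}t_i^{(r-1)/r}$ and splitting the remaining $s$-integral at $t_n/2$ reduces everything to elementary integrals $\int(t_n-s)^{-2\beta}s^{-c}\,\rd s$, which, together with $t_1,h_n\le CN^{-1}$, $t_2\ge cN^{-r}$ and $n\le N$, give $CN^{-2r(1-2\beta)}$. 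Adding the two pieces proves \eqref{eq.diffSqureDoub1}.

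For \eqref{eq.diffSqureDoub2} the weight must be kept: by \eqref{eq.singleSize}, $\hat u^{2\beta-1}(u-\hat u)^{1-2\beta}\le Cj^{-(1-2\beta)}\asymp CN^{-(1-2\beta)}\hat u^{-(1-2\beta)/r}$ when $u$ lies in the $j$-th subinterval. Splitting the inner integral into $[t_{i-1},t_i)$ (the subinterval below $\hat s=t_i$; substitute $v=\hat s-u$, use $\hat u\asymp\hat s$ and $\int_0^{h_i}|(v+\tau)^{-\beta}-v^{-\beta}|^2\,\rd v\le C\tau^{1-2\beta}$) and $[t_1,t_{i-1})$ (where $\hat s-u\ge h_i$; use the mean-value bound and $\int_{t_1}^{t_{i-1}}(t_i-u)^{-2\beta-2}u^{-b}\,\rd u\le Ct_i^{-b}h_i^{-2\beta-1}$), both pieces are $\le Ct_i^{2\beta-1}h_i^{2(1-2\beta)}$. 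Multiplying by $(t_n-s)^{-2\beta}$, integrating over $s\in[t_i,t_{i+1})$, summing over $i$ (with $i=n-1$ handled directly), and substituting $t_i^{2\beta-1}\asymp(i/N)^{r(2\beta-1)}$, $h_i\asymp h_{i+1}\asymp N^{-r}i^{r-1}$ and \eqref{eq.multiSize} reduce the sum to $CN^{-2r(1-2\beta)}\sum_i i^{(1-2\beta)(2r-3)}(n-i-1)^{-2\beta}$; since $(1-2\beta)(2r-3)+1\ge2\beta>0$, the discrete Beta bound gives $\le CN^{-2r(1-2\beta)}n^{2(r-1)(1-2\beta)}\le CN^{-2(1-2\beta)}$. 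For \eqref{lem.inA2}, writing $u\in[t_j,t_{j+1})$ so $\check u=t_{j+1}$, I split the $s$-integral as $\int_{t_{j+1}}^{t_{j+2}}+\int_{t_{j+2}}^{t_n}$: on the first piece $\hat s=t_{j+1}$, so $\big|\int_{t_{j+1}}^{t_{j+2}}(t_n-s)^{-\alpha}(\cdots)\,\rd s\big|\le(t_{j+1}-u)^{-\beta}\int_{t_{j+1}}^{t_{j+2}}(t_n-s)^{-\alpha}\,\rd s$; on the second piece $\hat s-u$ and $s-u$ are comparable (because $s-\hat s\le h_{(s)}\le C(\hat s-t_{j+1})\le C(\hat s-u)$), hence $|(s-u)^{-\beta}-(\hat s-u)^{-\beta}|\le C(s-u)^{-\beta-1}h_{(s)}$. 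Squaring, integrating over $u$, summing over $j$, and splitting each resulting $s$-integral at its midpoint turns everything into sums $N^{(\cdots)}\sum_k k^{c}(n-k-1)^{-2\alpha}$ with $c\ge0$; one checks the first piece is $\le CN^{-2\min\{3/2-\alpha-\beta,\,1-2\beta\}}$ (with a harmless extra $\log N$ when $\alpha=\tfrac12$) and the second, by the same but more delicate reduction, contributes the full $CN^{-2\min\{3/2-\alpha-\beta,\,1-2\beta\}}$.

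The hard part will not be any single estimate but the uniform exponent bookkeeping: in each reduced sum $\sum_k k^{c}(n-k-1)^{-b}$ one must verify that the resulting power of $N$ does not exceed the claimed exponent, which forces a case distinction according to whether the relevant exponents exceed $1$ — in particular the sign of $2\alpha-1$, which decides whether $\sum_k(n-k-1)^{-2\alpha}$ behaves like $n^{1-2\alpha}$, $\log n$, or $O(1)$ — and, for \eqref{lem.inA2}, whether $\tfrac32-\alpha-\beta\lessgtr1-2\beta$; one must also confirm that the borderline logarithms are swallowed by the strict slack in the subdominant terms, and handle the boundary indices $i,j\in\{n-2,n-1\}$, whose contributions have a different algebraic shape, by direct estimation rather than by the summation machinery.
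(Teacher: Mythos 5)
Your plan is correct in substance and its key estimates check out, but it overlaps with the paper's proof only partially, so a comparison is in order. For \eqref{eq.diffSqureDoub1} and \eqref{lem.inA2} you use essentially the paper's decompositions: the split of the outer integral at $t_2$ with the mean-value bound $|(s-u)^{-\beta}-(\hat s-u)^{-\beta}|\le C(\hat s-u)^{-\beta-1}(s-\hat s)$ away from $t_1$, and for \eqref{lem.inA2} the split of the $s$-integral at $t_{j+2}$ into a near piece (crude bound) and a far piece (mean-value bound, using that $s-u$ and $\hat s-u$ are comparable there) — the paper's $\cI_1,\cI_2$ versus $\cI_3$. Two genuine differences: (a) for the $[t_1,t_2]$ piece of \eqref{eq.diffSqureDoub1} the paper simply cites Lemma 3.2 of a prior work, whereas you prove it directly with the elementary bound $\int_0^L|(v+\tau)^{-\beta}-v^{-\beta}|^2\,\rd v\le C\tau^{1-2\beta}$, which is self-contained and preferable; (b) for \eqref{eq.diffSqureDoub2} the paper avoids your subinterval-by-subinterval discrete Beta computation entirely via the algebraic inequality $|(s-u)^{-\beta}-(\hat s-u)^{-\beta}|^2\le(\hat s-u)^{-2\beta}-(s-u)^{-2\beta}$, which telescopes the inner integral to the single boundary layer $\int_{\hat s}^{s}(s-u)^{-2\beta}u^{2\beta-1}\,\rd u\le CN^{-(1-2\beta)}\hat s^{2\beta-1}$ and finishes in four lines with \eqref{eq.BetaC}; your route reaches the same bound $CN^{-2r(1-2\beta)}n^{2(r-1)(1-2\beta)}\le CN^{-2(1-2\beta)}$ but at the cost of the full mesh-index bookkeeping. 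One caution on the part you defer: in \eqref{lem.inA2} with $\alpha\ge\frac12$, the naive pairing "$h_{j+2}\le CN^{-1}$ together with $\int_{t_1}^{t_{n-1}}(t_n-s)^{-2\alpha}\,\rd s\le Ch_n^{1-2\alpha}\le CN^{r(2\alpha-1)}$" loses a factor when $r>1$ and $n$ is small; you must either keep the discrete sum $N^{-2r(1-\alpha)}\sum_k k^{2(r-1)(1-\alpha)}(n-k)^{-2\alpha}$ as you propose, or use $h_{j+2}\le h_n$ so that the powers of $h_n$ combine to the nonnegative exponent $2-2\alpha$ before invoking $h_n\le CN^{-1}$ (this is also how the paper's displayed one-line bound for $\cI_1$ must be read, and how its $\cI_3$ estimate for $\alpha\ge\frac12+\beta$ is engineered, via the exponent split $(v-u)^{-\beta-1}=(s-u)^{\frac{\beta}{2}-1}(v-u)^{-\frac{3\beta}{2}}$ yielding $h_n^{3-2\alpha-2\beta}$). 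Since your reduction to $\sum_k k^{c}(n-k-1)^{-2\alpha}$ does this correctly, I see no gap, only unfinished — but genuinely routine — bookkeeping.
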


\begin{proof}
Following the proof of Lemma 3.2 of \cite{DaiXiaoBu2021}, one can read 
\begin{align}\label{eq:tn1}
\int_{t_1}^{t_2} (t_n-s)^{-2\beta} \int_{0}^{\hat{s}} \big| (s-u)^{-\beta} - (\hat{s}-u)^{-\beta} \big|^2 \rd u \rd s 
\leq C N^{-2(1-2\beta)}.
\end{align}
For any $s\in[t_2,T]$ and $u\in(0,t_1)$, 
\begin{align*}
\big| (s-u)^{-\beta} - (\hat{s}-u)^{-\beta} \big|^2 
&\leq C \Big| \int_{\hat{s}}^s (v-u)^{-\beta-1} \rd v \Big|^2 \\
&\leq C (\hat s-u)^{-1} \big| (s-u)^{\frac{1}{2}-\beta} - (\hat{s}-u)^{\frac{1}{2}-\beta} \big|^2\\
&\leq C(s-t_2)^{2\beta-1} h_2^{-2\beta} (s-\hat{s})^{2(\frac{1}{2}-\beta)} \\
&\leq C N^{-(1-2\beta)}h_2^{-2\beta} (s-t_2)^{2\beta-1}. 
\end{align*}
Then by \eqref{eq.BetaInte}, 
\begin{align*}
&\quad\, \int_{t_2}^{t_n} (t_n-s)^{-2\beta} \int_{0}^{t_1} \big| (s-u)^{-\beta} - (\hat{s}-u)^{-\beta} \big|^2 \rd u \rd s \\
&\leq C N^{-(1-2\beta)} h_2^{-2\beta} h_1\int_{t_2}^{t_n} (t_n-s)^{-2\beta} (s-t_2)^{2\beta-1} \rd s \leq C N^{-2(1-2\beta)}, 
\end{align*}
which along with \eqref{eq:tn1} yields \eqref{eq.diffSqureDoub1}. 

By \eqref{eq.s} and \eqref{eq.BetaInte},
\begin{align*}
&\quad\, \int_{t_1}^{\hat{s}} \big| (s-u)^{-\beta} - (\hat{s}-u)^{-\beta} \big|^2 \hat{u}^{2\beta - 1} (u-\hat{u})^{1-2\beta} \rd u \\
& \leq C N^{-(1-2\beta)} \int_{t_1}^{\hat{s}} \big| (s-u)^{-\beta} - (\hat{s}-u)^{-\beta} \big|^2
u^{2\beta-1} \rd u \\
& \leq C N^{-(1-2\beta)} \int_{0}^{\hat{s}} (\hat{s}-u)^{-2\beta} u^{2\beta-1} - (s-u)^{-2\beta} u^{2\beta-1} \rd u\\
& \leq C N^{-(1-2\beta)} \int_{\hat{s}}^s (s-u)^{-2\beta} u^{2\beta-1} \rd u \leq C N^{-2(1-2\beta)} \hat{s}^{2\beta-1}.
\end{align*}
Then \eqref{eq.diffSqureDoub2} follows from the above inequality and \eqref{eq.BetaC}.

Based on $(\hat{s}-u)^{-\beta}-(s-u)^{-\beta} = \beta \int_{\hat{s}}^s (v-u)^{-\beta-1} \rd v$, by denoting
\begin{align*}
\cI_1 &:= \sum_{i=0}^{n-3} \int_{t_i}^{t_{i+1}} \Big| \int_{t_{i+1}}^{t_{i+2}} (t_n-s)^{-\alpha} \Big( \int_{t_{i+1}}^s (v-u)^{-\beta-1} \rd v \Big) \rd s \Big|^2 \rd u, \\
\cI_2 &:= \int_{t_{n-2}}^{t_{n-1}} \Big| \int_{t_{n-1}}^{t_{n}} (t_n-s)^{-\alpha} \Big( \int_{t_{n-1}}^s (v-u)^{-\beta-1} \rd v \Big) \rd s \Big|^2 \rd u, \\
\cI_3 &:= \sum_{i=0}^{n-2} \int_{t_i}^{t_{i+1}} \Big| \int_{t_{i+2}}^{t_{n}} (t_n-s)^{-\alpha} \Big( \int_{\hat{s}}^s (v-u)^{-\beta-1} \rd v \Big) \rd s \Big|^2 \rd u,
\end{align*}
the proof of \eqref{lem.inA2} suffices to show that $\cI_{l}\le C N^{-2\min\{\frac{3}{2}-\alpha-\beta,\, 1-2\beta\}}$ for $l=1,2,3$.

When $n = 2$, $\cI_{1} = 0$. While $n \geq 3$,
\begin{align*}
\cI_{1}
&\leq \sum_{i=0}^{n-3} \int_{t_{i}}^{t_{i+1}} \Big| \int_{t_{i+1}}^{t_{i+2}} (t_n-s)^{-\alpha} \Big( \int_{t_{i+1}}^s (t_{i+1}-u)^{-\frac{\beta}{2}-\frac{1}{4}} (v-u)^{-\frac{\beta}{2}-\frac{3}{4}} \rd v \Big) \rd s \Big|^2 \rd u \\
&\leq C N^{-(\frac{1}{2}-\beta)} \sum_{i=0}^{n-3} \int_{t_{i}}^{t_{i+1}} \Big| \int_{t_{i+1}}^{t_{i+2}} (t_n-s)^{-\alpha} \rd s \Big|^2 (t_{i+1}-u)^{-\frac{1}{2}-\beta} \rd u \\
&\leq C N^{-(1-2\beta)} \sum_{i=0}^{n-3} \Big| \int_{t_{i+1}}^{t_{i+2}} (t_n-s)^{-\alpha} \rd s \Big|^2\\
&\le CN^{-(1-2\beta)}\sum_{i=0}^{n-3} h_{i+2} \int_{t_{i+1}}^{t_{i+2}} (t_n-s)^{-2\alpha} \rd s \le C N^{-2\min\{\frac{3}{2}-\alpha-\beta,\, 1-2\beta\}}.
\end{align*}
For $\cI_2$, it follows from \eqref{eq.BetaInte} that
\begin{align*}
\cI_{2}
&= \int_{t_{n-2}}^{t_{n-1}} \Big| \int_{t_{n-1}}^{t_{n}} \int_{v}^{t_n} (t_n-s)^{-\alpha} (v-u)^{-\beta-1} \rd s \rd v \Big|^2 \rd u \\
&\leq C \int_{t_{n-2}}^{t_{n-1}} \Big| \int_{t_{n-1}}^{t_{n}} (t_n-v)^{1-\alpha} (v-u)^{-\beta-1} \rd v \Big|^2 \rd u \\
&\leq C \int_{t_{n-2}}^{t_{n-1}} \Big| \int_{t_{n-1}}^{t_{n}} (t_n-v)^{1-\alpha} (v-t_{n-1})^{-\frac{\beta}{2}-\frac{3}{4}} (t_{n-1}-u)^{-\frac{\beta}{2}-\frac{1}{4}} \rd v \Big|^2 \rd u \\
&\leq C N^{-(\frac{5}{2}-2\alpha-\beta)} \int_{t_{n-2}}^{t_{n-1}} (t_{n-1}-u)^{-\beta-\frac{1}{2}} \rd u \leq C N^{-2(\frac{3}{2}-\alpha-\beta)}.
\end{align*}
Finally, it remains to bound $\cI_3$. When $\alpha \in (0, \frac{1}{2}+\beta)$, by \eqref{eq.singleSize},
\begin{align*}
\cI_3
&\leq C \sum_{i=0}^{n-2} \int_{t_i}^{t_{i+1}} \Big| \int_{t_{i+2}}^{t_{n}} (t_n-s)^{-\alpha} \Big( \int_{\hat{s}}^s (s-u)^{\beta-1} (v-u)^{-2\beta} \rd v \Big) \rd s \Big|^2 \rd u \\
&\leq C N^{-2(1-2\beta)} \sum_{i=0}^{n-2} \int_{t_i}^{t_{i+1}} \Big| \int_{u}^{t_{n}} (t_n-s)^{-\alpha} (s-u)^{\beta-1} \rd s \Big|^2 \rd u \\
&\leq C N^{-2(1-2\beta)} \int_{0}^{t_{n-1}} (t_n-u)^{-(2\alpha-2\beta)} \rd u \leq C N^{-2(1-2\beta)}.
\end{align*}
When $\alpha \in [\frac{1}{2}+\beta,1)$, by \eqref{eq.singleSize},
\begin{align*}
\cI_3
&\leq C \sum_{i=0}^{n-2} \int_{t_i}^{t_{i+1}} \Big| \int_{t_{i+2}}^{t_{n}} (t_n-s)^{-\alpha} \Big( \int_{\hat{s}}^s (s-u)^{\frac{\beta}{2}-1} (v-u)^{-\frac{3\beta}{2}} \rd v \Big) \rd s \Big|^2 \rd u \\
&\leq C h_{n}^{2-3\beta} \int_{0}^{t_{n-1}} \Big| \int_{u}^{t_{n}} (t_n-s)^{-\alpha} (s-u)^{\frac{\beta}{2}-1} \rd s \Big|^2 \rd u \\
&\leq C h_{n}^{2-3\beta} \int_{0}^{t_{n-1}} (t_n-u)^{-(2\alpha-\beta)} \rd u\leq C N^{-2(\frac{3}{2}-\alpha-\beta)}.
\end{align*}
The proof is completed. 
\end{proof}

\providecommand{\bysame}{\leavevmode\hbox to3em{\hrulefill}\thinspace}
\providecommand{\MR}{\relax\ifhmode\unskip\space\fi MR }
\providecommand{\MRhref}[2]{%
  \href{http://www.ams.org/mathscinet-getitem?mr=#1}{#2}
}
\providecommand{\href}[2]{#2}


\end{document}